\newtheorem{theorem}{Theorem}[section]
\newtheorem{proposition}[theorem]{Proposition}
\newtheorem{corollary}[theorem]{Corollary}
\newtheorem{lemma}[theorem]{Lemma}
\newtheorem{conjecture}[theorem]{Conjecture}
\theoremstyle{definition}
\newtheorem{definition}[theorem]{Definition}
\newtheorem{question}[theorem]{Question}
\newtheorem{remark}[theorem]{Remark}
\newtheorem{assumption}[theorem]{Assumption}
\newtheorem{algorithm}[theorem]{Algorithm}
\newtheorem*{acknowledgement}{Acknowledgement}
\newcommand{\PP}{\mathbb{P}}
\newcommand{\QQ}{\mathbb{Q}}
\newcommand{\CC}{\mathbb{C}}
\newcommand{\RR}{\mathbb{R}}
\newcommand{\ZZ}{\mathbb{Z}}
\newcommand{\cO}{\mathcal{O} }
\newcommand{\cC}{\mathcal{C} }
\newcommand{\cS}{\mathcal{S} }
\newcommand{\wtS}{\widetilde{\mathcal{S}}}
\newcommand{\rH}{\mathrm{H} }
\newcommand{\rM}{\mathrm{M} }
\newcommand{\ee}{\mathbf{e}}
\newcommand{\proj}{\mathrm{Proj}\;}
\def\sym{\mathrm{Sym} }
\def\Hom{\mathrm{Hom} }
\newcommand{\intr}{\mathrm{int}\,}
\newcommand{\Conv}{\mathrm{Conv}}
\newcommand{\Wedge}{\mathsf{\Lambda}}
\def\SL{\mathrm{SL}}
\def\GL{\mathrm{GL}}
\def\PGL{\mathrm{PGL}}
\def\SO{\mathrm{SO}}
\def\Spin{\mathrm{Spin}}
\def\Gr{\mathrm{Gr}}
\def\Sp{\mathrm{Sp}}
\def\git{/\!/ }
\def\Sym{\mathrm{Sym}}
\begin{document}

\title{Computation of GIT quotients of semisimple groups}
\date{\today, \currenttime}

\author{Patricio Gallardo}
\address{Department of Mathematics, 
University of California, Riverside, CA, 92521}
\email{pgallard@ucr.edu}  

\author{Jesus Martinez-Garcia}
\address{Department of Mathematical Sciences, University of Essex, Colchester, Essex CO4 3SQ, United Kingdom}
\email{jesus.martinez-garcia@essex.ac.uk}

\author{Han-Bom Moon}
\address{Department of Mathematics, Fordham University, New York, NY 10023}
\email{hmoon8@fordham.edu}

\author{David Swinarski}
\address{Department of Mathematics, Fordham University, New York, NY 10023}
\email{dswinarski@fordham.edu}

\maketitle

\begin{abstract}
We describe three algorithms to determine the stable, semistable, and torus-polystable loci of the GIT quotient of a projective variety by a reductive group. The algorithms are efficient when the group is semisimple. By using an implementation of our algorithms for simple groups, we provide several applications to the moduli theory of algebraic varieties, including the K-moduli of algebraic varieties, the moduli of algebraic curves and the Mukai models of the moduli space of curves for low genus. We also discuss a number of potential improvements and some natural open problems arising from this work.
\end{abstract}

\section{Introduction}\label{sec:intro}

Group actions and orbit spaces are ubiquitous in mathematics. The existence of symmetry in a given object oftentimes enables us to prove a surprising number of rich and deep results for them. Representation theory of finite groups and classical groups is one of excellent and approachable examples of this slogan. In geometry and topology, many interesting spaces are constructed as the orbit space (or quotient space) of another space by a symmetry group. For example, any hyperbolic surface can be obtained by a quotient space of the hyperbolic plane and the moduli space of Riemann surfaces is a quotient space of the Teichm\"uller space by the mapping class group action. 

In algebraic geometry, one often needs to construct the quotient space of an algebraic variety under a group action, while preserving a nice algebraic structure. There are several constructions, including the Chow quotient and the Hilbert quotient \cite{Kap93}. However, in applications where the group involved is reductive,  the most widely used quotient construction is the Geometric Invariant Theory (GIT) quotient, developed by Mumford \cite{MFK94}. There are two prominent reasons why this construction is widely used. The computation of the GIT quotient is approachable in many interesting examples, due to the Hilbert-Mumford criterion (Theorem \ref{thm:HilbertMumford}). The second reason is that if the given variety is projective, the quotient variety is also projective. Many interesting algebraic varieties, including moduli spaces of varieties and sheaves, have been constructed in this manner. 

\subsection{Main results}

The main goal of this article is to provide efficient computational algorithms and their implementations to compute the GIT quotient of a projective variety by a reductive group; with emphasis on the case where the group is semisimple.

To describe the GIT quotient of an algebraic variety $X$, one needs to describe two important open subsets, the so-called \emph{semistable locus} $X^{ss}$ and the \emph{stable locus} $X^s$ (for the definition and why they are essential, see Section \ref{sec:GITquotient}). The GIT quotient $X\git G$ is not the quotient of the whole $X$, but its open subset $X^{ss}$. The `quotient map' $X^{ss} \to X\git G$ is the set theoretic quotient map only over the open subvariety $X^s \subset X^{ss}$. In principle, these loci can be computed by employing the aforementioned Hilbert-Mumford criterion. However, the computation typically involves highly non-trivial convex geometry calculations, and as a result, many GIT analyses employ computer-assisted calculations. To our knowledge, in the literature, these calculations have been carried out on an \emph{ad hoc} basis; typically, each group of authors wrote a new computer program to analyse each new GIT problem.  One of our long-term goals is to completely automate these calculations. As a first step, we clearly describe algorithms to perform three key steps in a GIT analysis, and implement them in \texttt{SageMath}.\footnote{We chose \texttt{SageMath} in the hope that our code will remain useful to the community for a long time.} This permits us to run many examples using one program and compare the performance of the algorithm as the input varies, we believe for the first time.

\begin{theorem}\label{thm:mainthm}
Let $(X, L)$ be a pair of a projective variety $X$ and a very ample line bundle $L$. Let $G$ be a semisimple group and suppose $X$ admits an $L$-linearized $G$-action. The finite list $P_{s}^{F}$ (resp. $P_{ss}^{F}$) of states (see definition in Corollary \ref{cor:finite}) that determines $X^{s}$ (resp. $X^{ss}$) can be calculated by using Algorithm \ref{alg:Stable} (resp. Algorithm \ref{alg:Semistable}). 
\end{theorem}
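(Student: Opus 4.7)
The overall strategy is to translate the Hilbert-Mumford criterion into a finite combinatorial problem on the weight lattice of a maximal torus, and then verify that the two algorithms correctly enumerate the resulting distinguishing data. First I would fix a maximal torus $T\subset G$ with character lattice $M$ and Weyl group $W$. Every one-parameter subgroup $\lambda$ of $G$ is conjugate to one of $T$, and for a $T$-eigenbasis of $H^0(X,L)$ the Hilbert-Mumford weight $\mu^{L}(x,\lambda)$ depends on $x$ only through the set of $T$-weights appearing with nonzero coefficient in the decomposition of a lift of $x$; this is precisely the \emph{state} $\Xi(x)\subset M$ of Corollary~\ref{cor:finite}. Thus, up to $G$-translation (which only permutes the torus), whether $x\in X^{s}$ or $x\in X^{ss}$ is determined by the state $\Xi(x)$ and the associated convex-geometric conditions ($0$ in the interior of $\Conv(\Xi(x))$ for stability, $0\in\Conv(\Xi(x))$ for semistability).

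The next step is to establish finiteness. Because $L$ is very ample, the $T$-weights that can occur in $H^{0}(X,L)$ form a finite subset $\Xi\subset M$, and hence the collection of possible states is a finite subset of $2^{\Xi}$. Among these, only the states that arise from actual points of $X$ are relevant; moreover, passing to $W$-orbits collapses states conjugate under the Weyl action. The key reduction is that semistability (resp.\ stability) is monotone in the state with respect to inclusion: if $\Xi(x)\subseteq\Xi(y)$, then stability of $x$ implies stability of $y$. Consequently, it suffices to compile finite lists $P^{F}_{ss}$ and $P^{F}_{s}$ of ``extremal'' states (maximal non-semistable states and minimal stable states, up to $W$- and $G$-conjugacy) to determine $X^{ss}$ and $X^{s}$, respectively.

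With these structural facts in place, I would verify that Algorithm~\ref{alg:Stable} and Algorithm~\ref{alg:Semistable} produce exactly these lists. The algorithm should: (i) compute the finite set $\Xi$ of torus weights of $H^{0}(X,L)$; (ii) enumerate candidate states (for example, by iterating over rational hyperplanes through the origin, or equivalently through the faces of $\Conv(\Xi)$, as dictated by the Hilbert-Mumford convex-geometric criterion); (iii) for each candidate, test the relevant convex-hull condition; and (iv) cull states that are conjugate or comparable under inclusion, retaining only the extremal ones. Correctness then follows because every point $x\in X$ has $\Xi(x)\subseteq\Xi$, so the algorithms can reach it, and the monotonicity above guarantees the extremal states suffice. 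Termination is automatic from finiteness of $2^{\Xi}/W$.

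The main obstacle, and the reason the two algorithms must be stated separately, will be controlling this enumeration efficiently: naively traversing $2^{\Xi}$ is astronomical, and the correct pruning requires simultaneously using the Weyl-group symmetry and the convex-geometric tests to avoid generating redundant or a priori non-extremal states. A second, subtler point is verifying that when we reduce from arbitrary one-parameter subgroups $\lambda$ to $\lambda\in X_{*}(T)$, the states we enumerate really correspond to genuine points of $X$ and not just formal subsets of $\Xi$ — this is where the assumption that $G$ is semisimple (so that $X_{*}(T)\otimes\RR$ is Weyl-symmetric and all $\lambda$ are conjugate into a fixed Weyl chamber) plays a decisive role, and where the algorithms' efficiency gain over the reductive case comes from.
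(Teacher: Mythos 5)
Your opening two paragraphs reproduce the general framework that the paper already sets up in Section \ref{sec:GITquotient} (Hilbert--Mumford criterion, the torus trick, states, finiteness, Weyl symmetry), and that part is fine. But the actual content of Theorem \ref{thm:mainthm} is the correctness of the two specific algorithms, and your steps (ii)--(iv) replace the argument with a placeholder (``enumerate candidate states \dots by iterating over rational hyperplanes through the origin, or equivalently through the faces of $\Conv(\Xi)$''). The missing ideas are exactly what distinguishes the two algorithms. For $P_s^F$, the key observation is that a maximal non-stable state $\Xi_{V,\lambda\ge 0}$ forces $\Xi_{V,\lambda=0}$ to contain $d-1$ linearly independent characters (otherwise $\lambda$ can be perturbed to strictly enlarge the state), so it suffices to run over linearly independent $(d-1)$-subsets of characters and solve for the orthogonal $\lambda$. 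For $P_{ss}^F$ the analogous statement is Proposition \ref{prop:minimumacheiveddtimes}: every maximal unstable state $\Xi_{V,\lambda>0}$ equals $\Xi_{V,\lambda'\ge c}$ for an \emph{affine} supporting hyperplane with $c>0$ containing $d$ linearly independent characters, and this requires Assumption \ref{ass:nonempty} (nonemptiness of the $T$-stable locus), which must itself be verified in the semisimple case (Lemmas \ref{lem:assumptionsimplecase} and \ref{lem:assumptionproductcase}). Your proposed enumeration over hyperplanes through the origin is therefore wrong for the semistable algorithm: the relevant hyperplanes there do not pass through the origin, which is also why proportional characters cannot be merged in that case, unlike in the stable one.

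There are two further gaps. First, the algorithms do not enumerate over all of $\Xi_V$ but over the essential characters, so you must show this pruning loses no maximal state; that is the content of Proposition \ref{prop:differenceforstability} (stability) and Lemmas \ref{lem:optimization1}--\ref{lem:optimization2} (semistability), none of which appear in your outline. Second, being maximal among the states $\{\Xi_{V,\lambda\ge 0}\}_{\lambda\in F}$ is not the same as being a maximal state over all $\lambda$ that happens to have $\lambda\in F$, which is how $P_s^F$ and $P_{ss}^F$ are defined in Question \ref{que:algorithmrefined}; the final stage of each algorithm resolves this by comparing against Weyl translates under the subset $W'$ of elements moving $F$ to an adjacent chamber (Lemmas \ref{lem:refiningmaximalset} and \ref{lem:refiningmaximalsetsemistable}), and your ``cull states that are conjugate or comparable under inclusion'' does not capture this. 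Two smaller points: $P_s^F$ consists of \emph{maximal non-stable} states $\Xi_{V,\lambda\ge0}$, not ``minimal stable states''; and your concern about whether enumerated states are realized by genuine points of $X$ is a non-issue, since the lists are defined purely in terms of $\Xi_V$ and determine $X^s$, $X^{ss}$ through containment of the states $\Xi_x$ after the reduction to $X\subset\PP V^*$ given by Theorem \ref{thm:functoriality}.
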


We describe the definition of a state and the meaning of `calculating' $X^{s}$ and $X^{ss}$ in Section \ref{sec:GITquotient}. As a matter of fact, we may replace the term \emph{projective variety} in Theorem \ref{thm:mainthm} for the more general \emph{projective scheme over $k$}. However we will keep it as is for simplicity.

For the study of moduli spaces of degenerated objects, it is also helpful to study the stratification of the quotient of strictly semistable locus $(X \git G) \setminus (X^s /G)$. Such stratification can be understood by the \emph{polystable locus} $X^{ps} \subset X^{ss} \setminus X^s$, insofar the GIT `boundary'  $(X\git G) \setminus (X^s/G)$ represents, as a set, the set of polystable orbits. 

To describe the stratification, it is necessary to describe a similar stratification on $(X \git T) \setminus (X^s/T)$ for the induced maximal torus $T$-action (see Section \ref{ssec:GITboundary} for the notation and background). Algorithm \ref{alg:Polystable} describes a systematic way to compute the latter.

\begin{theorem}\label{thm:mainthmpoly}
Let $(X, L)$ be a pair of a projective variety $X$ and a very ample line bundle $L$. Let $G$ be a semisimple group and suppose $X$ admits an $L$-linearized $G$-action. Let $T$ be a fixed maximal torus of $G$. The finite list $P_{ps}^{F}$ of states that determine $T$-polystable locus in $X^{ss} \setminus X^{s}$ can be calculated by using Algorithm \ref{alg:Polystable}. 
\end{theorem}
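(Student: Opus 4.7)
The plan is to characterize $T$-polystability for the torus action in terms of convex geometry on the weight lattice, and then reduce the enumeration of $T$-polystable states to a finite search built on top of the semistable list $P_{ss}^{F}$ produced by Algorithm \ref{alg:Semistable}.

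First, I would invoke the torus version of the Hilbert--Mumford numerical criterion: for a $T$-semistable point $x \in X$ with weight state $\Xi(x) \subset \liet^{*}$, the point $x$ is $T$-polystable if and only if $0$ lies in the relative interior of $\Conv(\Xi(x))$. When $\Conv(\Xi(x))$ is full-dimensional this collapses to $T$-stability, which is already recorded in $P_{s}^{F}$. The new list $P_{ps}^{F}$ should therefore collect precisely those states whose convex hull has positive codimension in $\liet^{*}$ but still contains $0$ in its relative interior. In particular, the problem becomes purely combinatorial on the finite set of weights of the $T$-linearization of $L$.

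For the algorithm itself, I would iterate through each $\Xi \in P_{ss}^{F}$ and enumerate the proper faces $F$ of the polytope $\Conv(\Xi)$ whose relative interior contains $0$; the subset $\Xi \cap F$ is then added to $P_{ps}^{F}$. Completeness rests on the observation that every $T$-polystable orbit in $X^{ss} \setminus X^{s}$ arises as the closed orbit in the orbit closure of some strictly semistable point, so its weight state equals the intersection of some semistable state with a face of $\Conv(\Xi)$ containing $0$ in its relative interior, as recalled in Section \ref{ssec:GITboundary}. Finiteness of $P_{ps}^{F}$ then follows from the finiteness of $P_{ss}^{F}$ together with the finiteness of each face lattice.

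The main obstacle I anticipate is making the enumeration simultaneously complete, non-redundant, and efficient: the same polystable state can appear as a face of several distinct semistable polytopes, and face lattices grow rapidly in higher rank. I expect Algorithm \ref{alg:Polystable} to handle this by processing candidate faces relative to a canonical polyhedral structure on $\liet^{*}$ compatible with the cones already produced by Algorithms \ref{alg:Stable} and \ref{alg:Semistable}, so that each $T$-polystable state is emitted exactly once. With the correct bookkeeping in place, correctness follows directly from the convex-geometric characterization of $T$-polystability, and termination from the finite combinatorics of the weight lattice.
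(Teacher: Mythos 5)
Your combinatorial criterion for $T$-polystability (state in a positive-codimension subspace, trivial character in the relative interior of its hull) is exactly the paper's Lemma \ref{lem:Tpolystable}, but the search you build on top of it has a genuine gap: you iterate over $P_{ss}^F$, and in this paper $P_{ss}^F$ is the list of \emph{maximal unstable} states $\Xi_{V,\lambda>0}$ produced by Algorithm \ref{alg:Semistable}. Every character in such a state pairs strictly positively with $\lambda$, so $\chi_0\notin\Conv(\Xi_{V,\lambda>0})$ and no face of that polytope can contain $\chi_0$ in its relative interior; your enumeration would output nothing. The paper's Algorithm \ref{alg:Polystable} instead takes $P_s^F$ (the maximal \emph{non-stable} states $\Xi_{V,\lambda\ge0}$ from Algorithm \ref{alg:Stable}) as input and works with their zero-level parts $\Xi_{V,\lambda=0}$, which do lie on the hyperplane $\lambda^\perp$ through the origin; the justification that every $T$-polystable state lands (up to the Weyl action) inside some such $\Xi_{V,\lambda=0}$ is the reduction carried out via Lemmas \ref{lem:torusreduction} and \ref{lem:Tpolystable}, not via the semistable list. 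Your appeal to ``every polystable orbit is the closed orbit in the closure of a strictly semistable orbit'' is true pointwise, but it does not reduce to a finite search over any list the algorithms actually produce: the generic semistable state is all of $\Xi_V$, and by Assumption \ref{ass:nonempty} the only face of $\Conv(\Xi_V)$ containing $\chi_0$ in its relative interior is the whole polytope.

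A second, independent problem is that face enumeration is too coarse even with the correct input. By Definition \ref{def:Tpolystabilitystratification} the strata are indexed by proper linear subspaces $A\subset M_\RR$ with $\chi_0\in\intr\Conv(\Xi_V\cap A)$, and such an $A$ can cut through the relative interior of $\Conv(\Xi_{V,\lambda=0})$ rather than along a face: in the cubic surface example of Section \ref{sec:cubic}, the one-dimensional states $\Xi_2$ and $\Xi_3$ sit inside the two-dimensional $\Xi_{V,\lambda_1=0}$ but are not faces of its hull, yet they index the deepest (and, after the $\SL_4$-action, the only truly polystable) stratum. This is why the paper's algorithm, after keeping those $\Xi_{V,\lambda=0}$ with $\chi_0$ in the relative interior, loops over arbitrary lower-dimensional subsets $T'\subset\Xi_{V,\lambda=0}$ with $\chi_0\in\intr\Conv(T')$ and records the saturated state $\mathrm{Span}(T')\cap\Xi_V$, discarding Weyl-group duplicates at the end; your proposed ``canonical polyhedral structure'' bookkeeping is left unspecified and does not substitute for these steps.
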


Our algorithms work for any reductive group. However, we expect that for a general non-semisimple reductive group (e.g. the case of a torus $T$) the algorithm is slow because of the nature of the problem. In particular, for a non-semisimple reductive group, our algorithm characterizing the semistable locus does not seem to have any advantage compared to that of Popov \cite[Appendix C]{DK15}. Consult Remarks \ref{rem:reductivestable} and \ref{rem:reductivesemistable}.

\subsection{Applications to moduli theory}

Our motivation for this project is to automate part of the work required to describe compact moduli spaces. As previously hinted, the usual approach to use GIT to describe the objects classified in a given moduli space is as follows: one finds a projective scheme $H$ where each point represents an object in the moduli space. For example, if one is interested in describing the moduli space of cubic surfaces, one may consider
\[
    H=\mathbb P^{19}\cong \mathbb P \rH^0(\mathbb P^3, \mathcal O_{\mathbb P^3}(3))^*
\]
which parameterizes cubic surfaces, since the scheme $H$ characterizes homogeneous polynomials of degree $3$ in $4$ variables.
However, two different objects in $H$ might be equivalent in the moduli space. Often, $H$ has a natural $G$-action so that two objects are equivalent if and only if they are equivalent up to the action of $G$ (in the above example for cubics, one may consider $G$ to be $\mathrm{PGL}_{4}$). Thus, one wants to consider the GIT quotient $H^{ss}\git G$, where $H^{ss}\subset H$ is the largest subset for which the quotient is an algebraic variety. Our methods (and software) will provide a finite list of deformation families of objects that describe, among other things, $H\setminus H^{ss}$. The specific way of representing these families by our software may not be very informative, so the geometer will still have to interpret the program output into geometric terms. The latter may not be a trivial matter at all, but a subtle problem in singularity theory. For instance, in the example of cubic surfaces, the program's output will describe the families as polynomials, which the geometer will still have to translate into geometric terms by describing the possible singularities of those families of polynomials. See  Section \ref{sec:cubic} for this example in detail.

By using an implementation of our algorithms in \texttt{SageMath} \cite{sagemath}, we recover many known computational results and obtain some new results in moduli theory. We suppress any technical details in the introduction and refer the reader to Section \ref{sec:cubic} for a worked-out example on cubic surfaces and Section \ref{sec:examplesandstatistics} for other results on many more examples, as well as to Section \ref{sec:Kstability} for one example on the moduli of anti-canonical curves in a quadric surface (which is later reinterpreted as the family 2.24 in Mori-Mukai's classification of Fano threefolds). %

\subsection{Applications to K-stability}
The setting for the moduli of cubic surfaces above can clearly be generalized to that of hypersurfaces in projective space, or more generally complete intersections. Since the complexity in the analysis of the output increases with the degree and the dimension (the larger their degree is, polynomials may have nastier singularities), the most accessible applications will be in lower degrees, i.e. in the realm of Fano varieties. In recent years it has become apparent that Fano varieties admit a projective compactification thanks to the theory of K-stability. The latter is an algebro-geometric stability notion that controls the singularities of \emph{all} $\CC^*$-equivariant degenerations of an algebraic variety over the germ of a curve. This relatively recent theory first emerged from analytic geometry when considering the Calabi problem on projective manifolds of positive Ricci curvature (i.e. Fano manifolds), i.e. the problem of the existence of K\"ahler-Einstein metrics on these manifolds. It follows from \cite{CDS}, cf. \cite{Tian2015, Tian2015Corrigendum}, that a smoothable Fano variety admits a K\"ahler-Einstein metric if and only if it is K-polystable. There are further generalizations of this result (in the most general statement it is known as the \emph{Yau-Tian-Donaldson conjecture}), but the stated one is enough for our purposes. 

Due to the number of degenerations to consider in the definition of K-stability, determining when a Fano variety is K-polystable is just as challenging as determining whether it admits a K\"ahler-Einstein metric. However, here  moduli theory can come in handy. It is known that K-polystable smoothable Fano varieties form a projective moduli space known as the K-moduli space \cite{Odaka2014, Liu-Xu-Zhuang}. Yet, even in dimension 3 (the highest dimension for which smooth Fano varieties are classified \cite{Iskovskikh1, Iskovskikh2, Mori-Mukai1, Mori-Mukai2, Mori-Mukai3}), a systematic approach to determining K-stable Fano manifolds was not attempted until recently \cite{CalabiFanoProject} and knowledge of the K-moduli is even more lacking---only a few of the connected components have been studied \cite{Liu-Xu-cubic3folds, Spotti-Sun-dP4}.  Relying on our construction, we can recover a recent result of Papazachariou, who used an \emph{ad hoc} GIT computation to describe the connected component of the  K-moduli for family 2.25 in the Mori-Mukai classification.

\begin{theorem}[{\cite{papazachariou2022k}}]
\label{thm:Kstabilityintro}
The compact component of the K-moduli space of smooth Fano threefolds corresponding to family 2.25 in the Mori-Mukai classification is canonically isomorphic to the GIT quotient
\[
    \PP(\bigwedge^2 \rH^0(\mathbb P^3, \mathcal O_{\mathbb P^3}(2))^*)\git \SL_4,
\]
which parametrizes orbits of complete intersections of two quadrics in $\mathbb P^3$.
\end{theorem}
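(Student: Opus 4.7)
The plan is to directly apply the algorithms of Theorems \ref{thm:mainthm} and \ref{thm:mainthmpoly} to the natural $\SL_4$-action on $\PP(\wedge^2 \rH^0(\mathbb P^3, \mathcal O_{\mathbb P^3}(2))^*)$, thereby performing the GIT analysis by machine, and then to invoke the general K-moduli machinery to identify this GIT quotient with the stated component of K-moduli.

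First I would fix the geometric dictionary. A smooth member of Mori--Mukai family 2.25 is obtained as the blow-up of $\mathbb P^3$ along a smooth complete intersection of two quadrics, i.e.\ an elliptic quartic curve. Such a curve is cut out by a pencil of quadrics, i.e.\ a two-dimensional subspace of the $10$-dimensional space $\rH^0(\mathbb P^3, \mathcal O(2))$, and so corresponds to a point of the Pl\"ucker image of $\Gr(2, \rH^0(\mathcal O(2)))$ inside $\PP(\wedge^2 \rH^0(\mathcal O(2))^*)$. Since automorphisms of $\mathbb P^3$ are induced by $\SL_4$, isomorphism classes of these Fano threefolds correspond to $\SL_4$-orbits on this parameter space.

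Next, I would run the algorithms. The representation $\wedge^2 \rH^0(\mathcal O(2))$ has dimension $\binom{10}{2}=45$, and its torus weights are sums of pairs of distinct weights of $\Sym^2$ of the standard $\SL_4$-representation; these data are the only input required by Algorithms \ref{alg:Stable}, \ref{alg:Semistable}, and \ref{alg:Polystable}, which then output the finite lists $P_s^F$, $P_{ss}^F$, and $P_{ps}^F$ of states characterising the stable, semistable, and $T$-polystable loci. Exploiting the Weyl-group symmetry and restricting to the codimension-one Pl\"ucker quadric cuts down the combinatorics significantly. Each output state must finally be interpreted geometrically in terms of the rank profile of the quadrics in the pencil and the singularities of their intersection.

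To promote the GIT quotient to the K-moduli identification, I would invoke the general theory of K-moduli of Fano varieties (\cite{Odaka2014, Liu-Xu-Zhuang}) together with CM-line-bundle and comparison arguments analogous to those used in \cite{Liu-Xu-cubic3folds, Spotti-Sun-dP4}, matching K-polystability of the Fano threefold with GIT-polystability of the associated pencil. The main obstacle is not the GIT computation, which is mechanical given the paper's framework, but the geometric interpretation of each state and the precise K-stability/GIT-stability comparison. Both steps have been carried out by Papazachariou, so the essential content of our proof is the algorithmic reproduction of the stability analysis he performed by hand.
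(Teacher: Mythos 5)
Your outline follows essentially the same route as the paper's Section \ref{sec:Kstability}: set up the parameter space of pencils of quadrics, run Algorithms \ref{alg:Stable}, \ref{alg:Semistable}, \ref{alg:Polystable} on $V=\bigwedge^2 \rH^0(\PP^3,\cO_{\PP^3}(2))\cong\Gamma_{3\omega_1+\omega_2}$, interpret the output geometrically, and then transfer the GIT quotient to K-moduli. However, two points need attention. First, a small but real slip: the natural parameter space is $\Gr(2,\rH^0(\cO_{\PP^3}(2)))$, which sits in $\PP^{44}$ via the Pl\"ucker embedding in codimension $28$, not as a ``codimension-one Pl\"ucker quadric''; moreover restricting to the Grassmannian does not reduce the combinatorics at all, since the algorithms only see the weight set $\Xi_V$ of the ambient representation --- it is Theorem \ref{thm:functoriality} that lets one intersect the ambient (semi)stable loci with the Grassmannian afterwards, and the geometric interpretation is then done through the $2\times 10$ matrix of Pl\"ucker coordinates (Lemmas \ref{lemma:SingCurve} and \ref{lemma:Papazachariou}, giving ``non-stable $\Leftrightarrow$ singular'').

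Second, and more substantively, your K-moduli step is under-specified in exactly the place where the paper does real work. Before any continuity-type comparison can be invoked, one must verify that \emph{every} GIT-polystable point of \eqref{eq:family2-25} yields a K-polystable Fano threefold: the smooth members are K-stable by \cite{CalabiFanoProject}, and the polystability analysis (Algorithm \ref{alg:Polystable}) shows the unique strictly polystable orbit is $C_0=\{X_0X_1=X_2X_3=0\}$, whose associated blow-up $Y_0$ of $\PP^3$ is toric and is checked to be K-polystable by computing that the barycentre of its polytope is the origin. Without this verification the GIT quotient could a priori contain K-unstable points that would have to be excised and replaced, and no isomorphism with the K-moduli component could be concluded. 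The paper then applies the \emph{reverse} moduli continuity method of \cite{papazachariou2022k} (going from GIT to K-moduli), rather than the forward CM-line-bundle comparison of \cite{Liu-Xu-cubic3folds, Spotti-Sun-dP4} you suggest; your proposed direction could likely be made to work, but as stated it presupposes information about Gromov--Hausdorff/K-moduli limits that the reverse method is designed to avoid. So the skeleton of your plan matches the paper, but the geometric interpretation of the output states and the K-polystability check at the boundary are not optional bookkeeping --- they are the content of the proof.
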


Theorem \ref{thm:Kstabilityintro} is a proof-of-concept for an approach that uses GIT to describe the K-moduli of Fano threefolds and the role GIT can play in it. Indeed, given that \cite{CalabiFanoProject} and subsequent work have pretty much completed the classification of the general element of the K-moduli of Fano threefolds, one can now construct a GIT compactification and, for every element represented in the GIT compactification whose K-stability is unknown, apply methods from \cite{CalabiFanoProject} to determine it. The K-unstable elements (if any) in the GIT compactification have to be removed and replaced by others which are K-polystable. Then standard methods (moduli continuity method in \cite{OdakaSpottiSun}, cf. \cite{GMGS21}, or reverse moduli continuity method in \cite{papazachariou2022k}) can be used to find an isomorphism between the modified GIT quotient and the K-moduli. Given that the smooth locus of the K-moduli of Fano threefolds is now almost complete \cite{CalabiFanoProject}, its full description is within reach. We believe that this work will provide the technical GIT cornerstone to apply the (reverse) moduli continuity method to each of the families.

\subsection{Birational models of the moduli space of stable curves}

The moduli spaces ${\rM}_g$ of smooth curves and their compactifications  $\overline{\rM}_g$ are some of the most intensively studied moduli spaces in algebraic geometry. In a series of papers, Mukai described non-compact birational models of ${\rM}_g$ with $7 \le g \le 9$ as quotients of open dense subsets of symmetric spaces \cite{Mukai1992survey,Mukai1992g8,Mukai1995,Mukai2010}. By taking the GIT quotients of these symmetric spaces, we obtain compactifications of Mukai's spaces, and they are projective birational models of $\overline{\rM}_g$. We analyzed (semi)-stability for the corresponding GIT problems. For $g = 7$, the GIT problem is too large to analyze stability in full detail (see Section \ref{subsec:Mukai}). However, some geometric results were described in the fourth author's recent preprint \cite{Swin23}. Interestingly, the output of our algorithms is the simplest for $g = 9$, and the full (semi-)stability is described in \cite{sagemath-code}.

\subsection{Weyl group symmetry}

One key ingredient to our approach is to investigate the Weyl group symmetry carefully. For a polarized projective variety $(X, L)$ equipped with a linearized $G$-action, the GIT quotient can be described by using the induced $G$-action on the $G$-representation $V := \rH^{0}(X, L)$. For a fixed maximal torus $T$ of $G$, the $T$-stable/semistable loci $X^s$ and $X^{ss}$ can be described by using the finite set of characters $\Xi_V$ of $T$ on $V$. If we restrict ourselves to semisimple groups, $\Xi_V$ has Weyl group symmetry, so we can reduce the set $\Xi_V$ to proper subsets of \emph{essential characters} $\Xi_V^{E, s}$ and $\Xi_V^{E, ss}$. For the computation of $P_s^F$ and $P_{ss}^F$, the biggest bottleneck is considering many subsets of $\Xi_V$, and reducing $\Xi_V$ to $\Xi_V^{E,s}$ and $\Xi_V^{E,ss}$ therefore provides a significant improvement.

\subsection{Related work}

In \cite[Appendix C]{DK15}, Popov also describes algorithms for studying GIT quotients.  Here we briefly explain the difference between  his work and ours. Popov's algorithm computes a  stratification of the null-cone, which is the complement of the semistable locus; in this work, we also provide the stable and $T$-polystable loci computation. Also, Popov's algorithm calculates all unstable strata, whereas  Algorithm \ref{alg:Semistable} focuses on the maximal unstable strata only. While both his approach and ours can be applied to general reductive group actions, ours is more efficient for semisimple groups, since it makes use of the symmetry of the Weyl group to reduce the bottleneck of the algorithm. Even for the semistable locus of a non-semisimple reductive group action, for our purposes, our algorithm will be slightly more efficient than Popov's, because we only compute the maximal unstable strata.

In \cite{Der99, DK08}, the authors provide an algorithm, based on Gr\"obner basis techniques, to find the invariant subring of a given coordinate ring. Our algorithm does not compute any explicit invariants --- it only  detects whether there is a non-vanishing invariant for each point or not. Since Gr\"obner basis calculations can be very expensive in terms of time and memory, this approach is not suitable for many of the examples we wish to study.

There are some other works on computational GIT. For a fixed algebraic variety and a group action, the change of its linearization may provide different GIT quotients \cite{DH98, Tha96}. For the torus action on an affine variety, an algorithm to keep track of the variation is described in \cite{Kei12, BKR20}. Since any Mori Dream Space can be obtained in this way \cite[Proposition 2.9]{HK00}, it has important implications to the birational geometry of algebraic varieties, in particular Fano varieties \cite{LMR20}. However, this direction of research does not have any significant overlap with the contents of this article.

\subsection{Organization of the paper}
This article is intended to attract readers from various backgrounds. Up to section \ref{sec:examplesandstatistics}, only minimal prerequisites on algebraic geometry and representation theory of classical groups are assumed. In particular, experts in GIT may want to skip most of section \ref{sec:GITquotient}. The last two sections are devoted to advanced applications in moduli theory. 

Section \ref{sec:GITquotient} gives a definition and fundamental properties of GIT quotient. In Section \ref{sec:algorithm}, we describe our algorithms for the stable/semistable loci computation. Section \ref{sec:cubic} deals with a classically non-trivial well-known example (going as far back as Hilbert's work in the 19th century \cite{Hil93}) to demonstrate in a simple case how our algorithm works. In the remaining sections, we provide some statistics on the algorithms' running times and complexity (Section \ref{subsec:stat}),  and discuss consequences in the compactification of the moduli space of hypersurfaces (sections \ref{subsec:quintic-3folds}, \ref{subsec:cubic-5folds}), the birational geometry of the moduli spaces of curves (Section \ref{subsec:Mukai}), and the theory of the moduli space of K-stable objects (Section \ref{sec:Kstability}). The last section discusses some possible improvements to the algorithms and open problems for future work.%

For most of the paper, we work on an algebraically closed field $\Bbbk$ of arbitrary characteristic, with the exception of Section \ref{sec:Kstability}, where we assume that the base field is $\CC$. The algorithms work for any projective scheme, including reducible or non-reduced ones. They also work over non-algebraically closed fields and even relative bases, if the algebraic group scheme is split over the base \cite{Ses77}. But in order to simplify the exposition, we do not pursue full generality.

\begin{acknowledgement}
	This work was partially supported by a SQuaRE grant of the American Institute for Mathematics (AIM) which allowed the authors to meet several times, at AIM headquarters and online, to carry out this work. We thank AIM for their support and their patience with us during the pandemic years.
 
    JMG is partially supported by an EPSRC grant EP/V055399/1. We also received partial support from the University of Essex Department of Mathematical Sciences Research and Innovation Fund. 
    
    We would like to thank Tiago Duarte Guerreiro and Theodoros Papazachariou for useful discussions. We would also like to thank Ian Morrison and Ruadha\'i Dervan for constructive comments on earlier versions of the manuscript.
\end{acknowledgement}

\section{GIT quotients}\label{sec:GITquotient}

In this section, we review key definitions and results of GIT and fix notation. Standard references are \cite{MFK94}, \cite{Dol03}, and \cite{DK15}.

\subsection{Definition of projective GIT quotient}\label{ssec:GITquotient}

Let $(X, L)$ be a pair of a projective variety $X$ and a very ample line bundle $L$. This is equivalent to have an embedding $X \hookrightarrow \PP^{r} \cong \PP \rH^{0}(X, L)^{*}$ (here $r = \dim \rH^{0}(X, L) - 1$). We are interested in a good algebraic group action on $X$. 

A linear algebraic group $G$ is \emph{reductive} if its maximal smooth connected solvable normal subgroup is a torus. This class contains many important examples of groups, including finite groups, tori, and many classical algebraic groups, such as $\GL_n$, $\SL_n$, $\mathrm{O}_n$, $\mathrm{SO}_n$, and $\mathrm{Sp}_n$. Moreover, products, finite extensions, and quotients of reductive groups are also reductive. 

A \emph{semisimple} group is an algebraic group such that every smooth connected solvable normal subgroup is trivial. Thus, all semisimple groups are reductive. Examples include $\SL_n$, $\mathrm{SO}_n$, $\mathrm{Sp}_n$, and their direct sums, finite extensions and quotients. Hence $\mathrm{PGL}_n$ is semisimple, too. Semisimple groups can be classified by analyzing their Lie algebras. The Lie algebra of a semisimple group is a direct sum of simple Lie algebras, and the simple Lie algebras are classified by their Dynkin types ($A_n$, $B_n$, $C_n$, $D_n$, $E_6$, $E_7$, $E_8$, $F_4$, and $G_2$ \cite[Chapter 21]{FH91}). By Remark \ref{rmk:Liealg}, we may assume that $G$ is a product of simple groups.

Let $G$ be a reductive group. Suppose that $G$ acts on $X$ and assume further that this $G$ action can be extended to $L$ (i.e. the $G$-action on $X$ is \emph{linearized} to $L$). Then, for each $m \ge 0$, $\rH^{0}(X, L^{m})$ is a finite-dimensional $G$-representation. We denote by $\rH^{0}(X, L^{m})^{G}$ the subspace of $G$-invariant vectors. 

Let 
\[
	R(X, L) := \bigoplus_{m \ge 0}\rH^{0}(X, L^{m})
\]
be the section ring of $L$. Since $L$ is a very ample line bundle on $X$, $X = \proj R(X, L)$. Because the $G$-action is linearized, $R(X, L)$ has an induced $G$-action. Indeed the invariant subset 
\[
	R(X, L)^{G} := \bigoplus_{m \ge 0}\rH^{0}(X, L^{m})^{G}
\]
has a sub graded ring structure. 

Recall that $R(X, L)$ is the ring of `coordinate functions' of $X$. If there is a good quotient variety $X/G$, then its ring of coordinate functions should be identified with the $G$-invariant coordinate functions of $X$. This motivates the following definition. 

\begin{definition}\label{def:GITquotient}
The \emph{GIT quotient of $X$} (with respect to $L$ and the $G$-action on $L$) is defined by 
\[
	X \git_{L}G := \proj R(X, L)^{G}.
\]
As $G$ is reductive, by Nagata's theorem \cite[Theorem 3.3]{Dol03} (for positive characteristics, see \cite{Hab75}, \cite{Ses77}), $R(X, L)^{G}$ is also a graded finitely generated $\Bbbk$-algebra, so $X\git_{L}G$ is a projective variety. 
\end{definition}

If there is no chance of confusion, then we drop the subscript $L$ and write $X \git G$. 

\begin{remark}
In the literature, the choice of $L$ and the extended $G$-action on $L$ is called a \emph{linearization}. The GIT quotient depends on a choice of a linearization, and if we choose a different very ample line bundle $L$, or a different extension of $G$-action to $L$, the quotient may change. %
See \cite{DH98,Tha96} for details.
\end{remark}

\subsection{Stability and semi-stability}\label{ssec:stability}

The GIT quotient $X \git_{L} G$ is different from the quotient $X/G$ in the category of topological spaces in two ways. First of all, $X \git_{L}G$ is not the quotient of the whole $X$, but that of an open subset of $X$. From the embedding $R(X, L)^{G} \hookrightarrow R(X, L)$, we obtain a map 
\[
	\pi : X = \proj R(X, L) \dashrightarrow \proj R(X, L)^{G} = X\git_{L}G. 
\]
However, in most cases, $\pi$ is not a regular map, but a rational map. Indeed, for $x \in X$, let $m_{x}$ be the associated homogeneous maximal ideal of $R(X, L)$. Then the image $\pi(x)$ is a point associated to $m_{x} \cap R(X, L)^{G}$, but it may be the irrelevant ideal $\bigoplus_{m > 0}R(X, L)^{G}$, which does not correspond to any point on $X\git_{L}G$. This observation leads to the following definition.

\begin{definition}\label{def:semistablepoint}
A point $x \in X$ is called \emph{semi-stable} if there is a $G$-invariant section $s \in \rH^{0}(X, L^{m})^{G}$ for some $m > 0$ such that $s(x) \ne 0$. Let $X^{ss}(L)$ be the set of semi-stable points on $X$. (If the choice of the linearization $L$ is not ambiguous, we often simplify the notation to $X^{ss}$.) 
\end{definition}

The set $X^{ss}(L)$ is open. If $x \in X^{ss}(L)$, then $m_{x} \cap R(X, L)^{G}$ is not an irrelevant ideal. This implies that there is a $G$-invariant section which does not vanish at $x$. Thus, we have a regular morphism $\pi : X^{ss}(L) \to X\git_{L}G$, which is clearly $G$-invariant. 

A second issue that arises in GIT (compared to quotients in the category of topological spaces) is that $X \git_{L}G$ may not be the orbit space of $X^{ss}(L)$, because some of the orbits are identified on $X \git_{L}G$. This is because a reductive group $G$ is not compact if it is positive dimensional, so the $G$-orbits are often not closed, hence the closure of an orbit may contain another orbit. These two orbits must be identified in the quotient, to obtain a separated quotient variety. 

\begin{definition}\label{def:stablepoint}
A point $x \in X$ is called \emph{stable} if:
\begin{enumerate}
\item there is a section $s \in \rH^{0}(X, L^{m})^{G}$ for some $m > 0$ such that $s(x) \ne 0$ (i.e. $x$ is semistable),
\item the orbit $Gx$ has the same dimension as $G$, and
\item the orbit $Gx \subset X_{s} = \{y \in X\;|\; s(y) \ne 0\}$ is closed.
\end{enumerate}
Let $X^{s}(L)$ be the set of stable points on $X$. (If the choice of the linearization $L$ is not ambiguous, we often simplify the notation to $X^{s}$.)  
\end{definition}

The subset $X^{s}(L)\subseteq X^{ss}(L)$ is open. The restriction of $\pi$ to $X^{s}(L)$ is now a genuine quotient map, and $\pi(X^{s}(L))$ is precisely the set of $G$-orbits in $X^{s}(L)$. On $X^{ss}(L) \setminus X^s(L)$, the map $X^{ss}(L) \to X \git_L G$ is not a set-theoretic quotient map, as several orbits can be identified to a single point. However, for each point $y \in X \git_L G \setminus X^{s}(L)\git_L G$, there is a unique \emph{closed} orbit $Gx \subset X^{ss}(L)$ such that $\pi (Gx) = y$. Such a point $x$ is called a \emph{strictly polystable point}. In other words, the description of the \emph{GIT boundary} $X\git_L G \setminus X^s/G$ is equivalent to the classification of strictly polystable points. 

The following notions complete the picture:

\begin{definition}\label{def:unstable}
A point $x \in X$ is \emph{unstable} if $x \in X \setminus X^{ss}(L)$. A point $x \in X$ is \emph{non-stable} if $x \in X \setminus X^{s}(L)$. The set of unstable points and the set of non-stable points are denoted by $X^{us}(L)$ and $X^{ns}(L)$, respectively --- or $X^{us}$ and $X^{ns}$ when no confusion is likely.
\end{definition}

We close this section with the following observation, which reduces the computation of the (semi-)stable locus to that of the ambient projective space. Set $V := \rH^{0}(X, L)$. Then, since $L$ is very ample, $\iota : X \hookrightarrow \PP V^{*}$. Furthermore, $\PP V^{*}$ has an induced $G$-action on $\cO(1) = \cO_{\PP V^{*}}(1)$ because $\rH^{0}(\PP V^{*}, \cO(1)) \cong \rH^{0}(X, L)$. Thus we may consider another GIT quotient $\PP V^{*}\git_{\cO(1)} G$. The map $\iota$ is $G$-equivariant. The next theorem tells us that the (semi-)stable locus is also compatible. 

\begin{theorem}[\protect{\cite[Theorem 1.19]{MFK94}}]\label{thm:functoriality}
Under the above situation, $X^{ss}(L) = X \cap \PP V^{* ss}(\cO(1))$ and $X^{s}(L) = X \cap \PP V^{* s}(\cO(1))$. 
\end{theorem}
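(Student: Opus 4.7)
The plan is to prove each of the two equalities $X^{ss}(L) = X \cap \PP V^{*ss}(\cO(1))$ and $X^{s}(L) = X \cap \PP V^{*s}(\cO(1))$ by establishing both inclusions. I would separate the semistable case (which is essentially a statement about lifting invariant sections) from the stable case (which additionally concerns orbit dimensions and closedness).

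First I would dispose of the easy direction. If $\iota(x) \in \PP V^{*ss}(\cO(1))$, there is some $\tilde s \in \rH^{0}(\PP V^*, \cO(m))^{G}$ with $\tilde s(\iota(x)) \neq 0$. The pullback $\iota^{*}\tilde s \in \rH^{0}(X, L^{m})^{G}$ is then a $G$-invariant section of $L^m$ with $(\iota^{*}\tilde s)(x) \neq 0$, so $x \in X^{ss}(L)$. This gives $X^{ss}(L) \supseteq X \cap \PP V^{*ss}(\cO(1))$, and the same argument works verbatim for the stable side once the closed-orbit and dimension conditions are addressed (see below).

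The substantive inclusion $X^{ss}(L) \subseteq X \cap \PP V^{*ss}(\cO(1))$ requires lifting a $G$-invariant section of some power of $L$ on $X$ to a $G$-invariant section of some power of $\cO(1)$ on $\PP V^*$. Given $s \in \rH^{0}(X, L^{m})^{G}$ with $s(x) \neq 0$, I would first invoke Serre vanishing on the ideal sheaf sequence
\[
0 \to \cI_{X}(k) \to \cO_{\PP V^{*}}(k) \to L^{k} \to 0
\]
to conclude that for $k \gg 0$ the restriction $\Sym^{k} V = \rH^{0}(\PP V^{*}, \cO(k)) \twoheadrightarrow \rH^{0}(X, L^{k})$ is surjective. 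Replacing $s$ by a large power $s^{n}$, I may assume $s^{n}$ is the restriction of some $\tilde s_{0} \in \Sym^{mn} V$. Since $G$ is reductive, the Reynolds operator $R$ exists and is a $G$-equivariant projection onto invariants that commutes with $G$-equivariant restriction maps. Setting $\tilde s := R(\tilde s_{0}) \in \Sym^{mn} V$ produces a $G$-invariant section, and the commutation yields $\iota^{*}\tilde s = R(\iota^{*}\tilde s_{0}) = R(s^{n}) = s^{n}$, where the last equality uses that $s^{n}$ is already $G$-invariant. Because $s(x) \neq 0$, we have $\tilde s(\iota(x)) = s^{n}(x) \neq 0$, giving $\iota(x) \in \PP V^{*ss}(\cO(1))$ as required.

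For the stable case, the orbit-dimension condition $\dim Gx = \dim G$ is intrinsic to the stabilizer and transfers automatically between $X$ and $\PP V^*$ since $\iota$ is a $G$-equivariant closed embedding. The closedness condition requires a compatibility check: if $x \in X^{s}(L)$ with witnessing invariant $s \in \rH^{0}(X, L^{m})^{G}$, lift as above to $\tilde s$ so that $\iota^{*}\tilde s = s^{n}$, and note that $X \cap (\PP V^*)_{\tilde s} = X_{s^{n}} = X_{s}$. Since $X$ is $G$-invariant and closed in $\PP V^*$, closedness of $Gx$ in $X_{s}$ is equivalent to closedness of $G\iota(x)$ in $(\PP V^{*})_{\tilde s}$ — closedness in the intersection with a closed invariant subvariety transfers in both directions. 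The converse direction works symmetrically by restricting a witness $F \in \rH^{0}(\PP V^{*}, \cO(m))^{G}$ to $X$.

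The step I expect to be the main obstacle is the lifting argument in paragraph three: one must verify carefully that (i) the restriction from $\Sym^{k} V$ to $\rH^{0}(X, L^{k})$ becomes surjective for $k$ large enough (which I handle via Serre vanishing, though one could also use Castelnuovo--Mumford regularity), and (ii) the Reynolds operator genuinely commutes with pullback along $\iota$ and acts as the identity on sections that are already invariant. Both (i) and (ii) crucially rely on reductivity of $G$ (for existence of the Reynolds operator) together with very-ampleness of $L$ (for the surjectivity), and the whole argument collapses without either hypothesis.
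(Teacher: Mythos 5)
Your write-up reproduces, in outline, the standard argument behind the cited result \cite[Theorem 1.19]{MFK94} (the paper itself gives no proof, it only quotes Mumford). In characteristic zero the proposal is correct and complete: the easy direction by restricting invariants, the Serre-vanishing surjectivity of $\Sym^{k}V = \rH^{0}(\PP V^{*}, \cO(k)) \to \rH^{0}(X, L^{k})$ for $k \gg 0$, the lift of a power $s^{n}$ followed by averaging, and the transfer of the orbit-dimension and closedness conditions through the closed $G$-equivariant embedding (using $X_{s} = X \cap (\PP V^{*})_{\tilde s}$ and that $X$ is closed and $G$-invariant) are all sound as stated.

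The one step that would fail in the paper's actual setting is the appeal to the Reynolds operator. The paper works over an algebraically closed field $\Bbbk$ of \emph{arbitrary} characteristic, and when $\mathrm{char}\,\Bbbk = p > 0$ a reductive (e.g.\ semisimple) group such as $\SL_{n}$ is not linearly reductive, so no Reynolds operator exists; reductivity only provides \emph{geometric} reductivity (Haboush \cite{Hab75}, cf.\ \cite{Ses77}, both already invoked in Definition \ref{def:GITquotient}). The standard repair is the lifting lemma for geometrically reductive groups: given a $G$-equivariant surjection such as $\Sym^{mn}V \twoheadrightarrow \rH^{0}(X, L^{mn})$ and the invariant $s^{n}$ downstairs, some power $(s^{n})^{q}$ (with $q$ a power of $p$ in characteristic $p$, and $q=1$ in characteristic zero, where your Reynolds argument applies) lifts to an invariant $\tilde s \in \left(\Sym^{mnq}V\right)^{G}$. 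Since $\iota^{*}\tilde s = s^{nq}$ is still nonvanishing at $x$ and $X_{s^{nq}} = X_{s}$, the rest of your argument, including the stable-locus bookkeeping, goes through verbatim. With this substitution the proof is valid in all characteristics; as written, it proves the theorem only over fields of characteristic zero.
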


Thus, the map $\iota$ induces the morphism between GIT quotients $X\git_{L}G \hookrightarrow \PP V^{*}\git_{\cO(1)}G$.

\subsection{Hilbert-Mumford criterion}

One of the many reasons why the GIT quotient is useful when compared to other algebro-geometric quotients (--- )for example the Chow quotient \cite{Kap93}) is that we may describe the quotient explicitly by calculating the (semi/poly-)stable locus. A key tool for this is the Hilbert-Mumford criterion, which provides a way to describe the (semi-)stable locus explicitly and combinatorially. 

By Theorem \ref{thm:functoriality}, we may assume that $X = \PP V^{*}$ where $V$ is a finite dimensional $G$-representation. We want to describe $X^{ss}$ and $X^{s}$ by describing their complements, $X^{us}$ and $X^{ns}$, respectively.

Let $\lambda \in \Hom(\Bbbk^{*}, G)$ be a one-parameter subgroup. $V$ has an induced $\Bbbk^{*}$-representation structure. Since $\Bbbk^{*}$ is abelian, we may find a basis $\{s_{0}, s_{1}, \ldots, s_{n}\}$ of $V$ and integers $w_0, \ldots, w_n$ such that 
\[
	\lambda(t) \cdot s_{i} = t^{w_{i}}s_{i}.
\]
\begin{definition}\label{def:mu}
Let $x \in \PP V^{*}$ and $\lambda$ be a one-parameter subgroup. We define a numerical function $\mu(x, \lambda)$ as 
\[
	\mu(x, \lambda) := \min \{w_{i}\;|\; s_{i}(x) \ne 0\}.
\]
\end{definition}

\begin{theorem}[\protect{Hilbert-Mumford criterion \cite[Theorem 2.1]{MFK94}, \cite[Theorem 9.1]{Dol03}}]\label{thm:HilbertMumford}
Let $G$ be a reductive group, $V$ be a finite dimensional $G$-representation and $x \in \PP V^{*}$. Then
\begin{enumerate}
\item $x$ is semi-stable if and only if $\mu(x, \lambda) \le 0$ for all $\lambda$;
\item $x$ is stable if and only if $\mu(x, \lambda) < 0$ for all $\lambda$. 
\end{enumerate}
\end{theorem}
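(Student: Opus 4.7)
The plan is to reduce the projective (semi)stability to a statement about orbit closures in the affine cone, and then appeal to the fundamental fact that orbit-closure degenerations under a reductive group can always be realised along a one-parameter subgroup. Pick a lift $\tilde{x} \in V^{*} \setminus \{0\}$ of $x$. Using the isomorphism $R(\PP V^{*}, \cO(1))^{G} \cong \sym^{\bullet}(V)^{G}$ and the fact that a homogeneous invariant is a regular $G$-invariant function on $V^{*}$, I would first establish the two standard reformulations: $x$ is semistable if and only if $0 \notin \overline{G\cdot \tilde{x}}$ in $V^{*}$, and $x$ is stable if and only if in addition $G\cdot \tilde{x}$ is closed in $V^{*}$ and of dimension $\dim G$. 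Next, I would reinterpret the numerical invariant $\mu(x,\lambda)$ in these affine terms: writing $\tilde{x}=\sum a_{i} s_{i}^{*}$ in the dual weight basis, one sees that $\mu(x,\lambda)=-\max\{w_{i}\;|\;a_{i}\ne 0\}$ (up to the usual sign convention), so the condition $\mu(x,\lambda)>0$ is equivalent to $\lim_{t\to 0}\lambda(t)\cdot \tilde{x}=0$ and the condition $\mu(x,\lambda)\ge 0$ is equivalent to $\lim_{t\to 0}\lambda(t)\cdot \tilde{x}$ existing and lying in a proper face of the orbit closure.

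With these dictionaries, the \emph{necessity} direction of each statement is immediate: if $x$ is semistable, then $0\notin\overline{G\cdot\tilde{x}}\supseteq \overline{\lambda(\Bbbk^{*})\cdot\tilde{x}}$ for every one-parameter subgroup $\lambda$, which rules out $\mu(x,\lambda)>0$; and if $x$ is stable, a one-parameter subgroup with $\mu(x,\lambda)=0$ would produce either a fixed direction of $\lambda$ (contradicting $\dim G\cdot\tilde{x}=\dim G$ when $\lambda$ is non-trivial in the appropriate quotient) or a degeneration of $\tilde{x}$ to a distinct point of $\overline{G\cdot\tilde{x}}\setminus G\cdot\tilde{x}$, contradicting closedness of the orbit. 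I would spell out the second case by invoking that $\lim_{t\to 0}\lambda(t)\cdot\tilde{x}$ lies in the orbit closure, and, if it lay in $G\cdot\tilde{x}$, composing with the inverse group element would give a subgroup fixing $\tilde{x}$ up to scalar, violating the dimension condition.

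The \emph{sufficiency} direction is the genuine content of the theorem and will be the main obstacle. I would isolate it as the following statement, which is the theorem of Hilbert-Mumford-Iwahori-Kempf-Rousseau: if $Y\subseteq V^{*}$ is a closed $G$-invariant subset and $\overline{G\cdot\tilde{x}}\cap Y\ne\emptyset$, then there is a one-parameter subgroup $\lambda$ of $G$ such that $\lim_{t\to 0}\lambda(t)\cdot\tilde{x}$ exists and lies in $Y$. Applied with $Y=\{0\}$ this yields sufficiency for semistability, and applied with $Y$ a $G$-orbit in $\overline{G\cdot\tilde{x}}\setminus G\cdot\tilde{x}$ (or the zero set of the stabiliser dimension drop) it yields sufficiency for stability. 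The proof I would sketch uses the valuative criterion of properness: any point of $\overline{G\cdot\tilde{x}}$ is hit by a morphism $\spec \Bbbk[\![t]\!]\to \overline{G\cdot\tilde{x}}$, which after a finite base change and passing to the field of fractions lifts to a map $\spec \Bbbk(\!(t)\!)\to G$. The Iwahori decomposition of $G(\Bbbk(\!(t)\!))$ (or, concretely, the Cartan/Bruhat decomposition applied to a reductive group over a discretely valued field) allows one to replace this map by a one-parameter subgroup with the same limit, up to the action of an element of $G(\Bbbk[\![t]\!])$. This structural input is what distinguishes reductive groups from arbitrary linear algebraic groups, and it is the step where I would need to be most careful.

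Finally, to conclude (2) from this, I would combine the dimension-drop and orbit-closedness pieces: the stable condition $\mu(x,\lambda)<0$ for all $\lambda$ rules out both $0\in\overline{G\cdot\tilde{x}}$ (so $x$ is semistable) and any proper degeneration inside $\overline{G\cdot\tilde{x}}$ (so $G\cdot\tilde{x}$ is closed), and it excludes non-trivial one-parameter stabilisers (so $\dim G\cdot\tilde{x}=\dim G$), yielding stability in the sense of Definition \ref{def:stablepoint}.
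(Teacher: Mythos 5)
First, a point of reference: the paper does not prove this theorem at all --- it is quoted with citations to \cite{MFK94} and \cite{Dol03} --- so the only fair comparison is with the classical proof in those sources. Your plan is essentially that proof: pass to the affine cone, translate (semi)stability into statements about $\overline{G\cdot\tilde{x}}$, observe that the easy direction follows from the $\mu$--limit dictionary, and obtain the hard direction from the fact that any degeneration realised by a $\Bbbk(\!(t)\!)$-point of $G$ can be replaced, via the Iwahori/Cartan decomposition of $G(\Bbbk(\!(t)\!))$ modulo $G(\Bbbk[\![t]\!])$, by a one-parameter subgroup. That is the right architecture, and the semistability part (1) of your sketch is complete in outline. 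Two cosmetic remarks: with the paper's convention $\mu(x,\lambda)=\min\{w_i : s_i(x)\ne 0\}$, the correct dictionary is that $\mu(x,\lambda)>0$ corresponds to $\lim_{t\to 0}\lambda^{-1}(t)\cdot\tilde{x}=0$ (equivalently $\lim_{t\to\infty}\lambda(t)\cdot\tilde{x}=0$); this inversion is harmless since the criterion quantifies over all $\lambda$, but you should fix the sign rather than leave it ``up to convention''. Also, in (2) the quantifier must be over nontrivial $\lambda$ (the trivial one always gives $\mu=0$); this imprecision is already in the paper's statement.

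The genuine gap is in the sufficiency direction of (2), in the step where you conclude $\dim G\cdot\tilde{x}=\dim G$. Ruling out ``non-trivial one-parameter stabilisers'' does not rule out a positive-dimensional stabiliser: the identity component of $G_{\tilde{x}}$ could be unipotent (e.g.\ isomorphic to $\mathbb{G}_a$) and then it contains no nontrivial one-parameter subgroup at all, so no $\lambda$ with $\mu(x,\lambda)=0$ is produced by your argument. Your fallback choices of $Y$ do not cover this case either: if the orbit is closed, $\overline{G\cdot\tilde{x}}\setminus G\cdot\tilde{x}$ is empty, and ``the zero set of the stabiliser dimension drop'' is not a closed $G$-invariant subset of $\overline{G\cdot\tilde{x}}$ to which the Kempf-type statement applies. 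There are two standard repairs. Either invoke Matsushima/Richardson--Haboush reductivity: once $G\cdot\tilde{x}$ is known to be closed in $V^{*}$, the stabiliser $G_{\tilde{x}}$ is reductive, so if it were positive dimensional it would contain a nontrivial torus, giving a one-parameter subgroup fixing $\tilde{x}$ and hence $\mu(x,\lambda)=0$, a contradiction. Or, as in \cite{MFK94}, recast stability as properness of the orbit map $G\to V^{*}$, $g\mapsto g\cdot\tilde{x}$ (properness forces the stabiliser, being complete and affine, to be finite, and the image to be closed), and run the valuative criterion plus Iwahori decomposition against that properness statement; then failure of closedness and failure of finiteness of the stabiliser are handled uniformly by a single $\Bbbk(\!(t)\!)$-point of $G$ along which $\tilde{x}$ has a limit. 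Since your sketch already contains the Iwahori machinery, the second route is the natural way to close the gap.
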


To simplify the calculation, we may use the following `reduction-to-maximal-torus' trick (or the \emph{torus trick}, for simplicity). Observe that:
\begin{enumerate}
\item A point $x \in \PP V^{*}$ is (semi/poly-)stable if and only if $gx$ is (semi/poly-)stable for $g \in G$;
\item For any $x \in \PP V^{*}$, $\lambda \in \Hom(\Bbbk^{*}, G)$, and $g \in G$, we have $\mu(x, \lambda) = \mu(gx, g\lambda g^{-1})$.
\end{enumerate}

The image of any one-parameter subgroup is contained in a maximal torus of $G$. Furthermore, any two maximal tori of $G$ are conjugate to each other. So the Hilbert-Mumford criterion can be restated as:
\begin{theorem}[\protect{Hilbert-Mumford criterion, second version}]\label{thm:HilbertMumford2}
Let $G$ be a reductive group, $V$ be a finite dimensional $G$-representation and $x \in \PP V^{*}$. Then $x$ is $G$-(semi-)stable if and only if $x$ is $T$-semi-stable for all maximal tori $T$.
\end{theorem}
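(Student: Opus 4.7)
The plan is to deduce Theorem~\ref{thm:HilbertMumford2} directly from Theorem~\ref{thm:HilbertMumford}, by combining two standard facts from the structure theory of reductive groups: every one-parameter subgroup $\lambda \in \Hom(\Bbbk^{*}, G)$ factors through some maximal torus $T_{\lambda}$, and any two maximal tori of $G$ are $G$-conjugate. The two observations (1) and (2) listed just before the statement provide the compatibility of $\mu$ with the $G$-action that we need.

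For the easy implication, suppose that $x$ is $G$-(semi-)stable. Fix an arbitrary maximal torus $T \le G$. Since $T$ is itself reductive (in fact abelian), Theorem~\ref{thm:HilbertMumford} applies to the induced $T$-action on $V$. Every $\lambda \in \Hom(\Bbbk^{*}, T)$ is in particular a one-parameter subgroup of $G$, and the numerical function $\mu(x, \lambda)$ of Definition~\ref{def:mu} depends only on $\lambda$ and $x$, not on the ambient group. Hence the inequalities $\mu(x, \lambda) \le 0$ (resp.\ $<0$) for all $\lambda \in \Hom(\Bbbk^{*}, G)$ restrict to the corresponding inequalities for $\lambda \in \Hom(\Bbbk^{*}, T)$, yielding $T$-(semi-)stability.

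For the converse, assume $x$ is $T$-(semi-)stable for every maximal torus $T$. Given any $\lambda \in \Hom(\Bbbk^{*}, G)$, its image is a connected abelian (indeed, one-dimensional) subgroup, so it is contained in some maximal torus $T_{\lambda}$. Then $\lambda \in \Hom(\Bbbk^{*}, T_{\lambda})$, and applying Theorem~\ref{thm:HilbertMumford} to the $T_{\lambda}$-action gives $\mu(x, \lambda) \le 0$ (resp.\ $<0$). Since $\lambda$ was arbitrary, Theorem~\ref{thm:HilbertMumford} for $G$ yields $G$-(semi-)stability of $x$. Alternatively, one can fix a single maximal torus $T_{0}$, write $\lambda = g \lambda_{0} g^{-1}$ with $\lambda_{0} \in \Hom(\Bbbk^{*}, T_{0})$ (using conjugacy of maximal tori), and then use observation (2) to rewrite $\mu(x, \lambda) = \mu(g^{-1} x, \lambda_{0})$, together with observation (1), to reduce to checking $T_{0}$-(semi-)stability along the $G$-orbit of $x$.

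The only nontrivial ingredient is the group-theoretic fact that every one-parameter subgroup of a reductive $G$ factors through a maximal torus (equivalently, that one-parameter subgroups and maximal tori are interchanged coherently under $G$-conjugation), which I will cite from the standard reductive group literature rather than prove. Everything else is a routine unwinding of definitions combined with observations (1) and (2).
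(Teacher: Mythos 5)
Your argument is correct and is essentially the same as the paper's: the paper justifies Theorem~\ref{thm:HilbertMumford2} by exactly the two facts you use (every one-parameter subgroup factors through a maximal torus, and all maximal tori are conjugate), combined with the torus-trick observations (1) and (2) and Theorem~\ref{thm:HilbertMumford}. You have merely written out in full the short deduction the paper leaves implicit.
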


Thus, we may analyze (semi-)stability in two steps. 
\begin{enumerate}
\item Fix a maximal torus $T$ of $G$ and study (semi-)stability with respect to $T$;
\item Describe the $G$-orbit of each stratum of the unstable/non-stable locus with respect to $T$. In many cases, this step is done by describing each orbit geometrically or in a coordinate-free way. 
\end{enumerate}

The first step is a highly non-trivial combinatorial calculation, and we provide an algorithm for it in this paper, together with an implementation for simple groups of type $A$, $B$, $C$ and $D$ (the most common ones in applications) in \texttt{SageMath} \cite{sagemath-code}. Our algorithm works for all reductive groups. However, it is more efficient for semisimple groups. Representations of these groups have Weyl group symmetry, which we exploit to increase the efficiency of the algorithm  significantly. 

For many moduli problems, which are central applications of GIT calculation, the second step involves the geometry of parameterized objects. We do not focus on this step in this paper, but see Section \ref{sec:cubic} for an example. 

\subsection{State polytopes}

The Hilbert-Mumford criterion and the torus trick enable us to interpret (semi-)stability in terms of polyhedral geometry. The purpose of this section is to explain this connection. 

Let $G$ be a reductive group and let $T$ be a fixed maximal torus of $G$. Let $N := \Hom(\Bbbk^{*}, T)$ be the set of one-parameter subgroups, which has a lattice structure. Set $N_{\QQ} := N \otimes_{\ZZ}\QQ$ and $N_{\RR} := N\otimes_{\ZZ}\RR$. Then $N_{\QQ}$ and $N_{\RR}$ are finite dimensional vector spaces and their dimension is called the rank of $G$. 

Let $M := \Hom(T, \Bbbk^{*})$ be the group of characters. We may define $M_{\QQ}$ and $M_{\RR}$ in the same way. An element of $M_{\RR}$ is called a \emph{weight}. There is a perfect pairing 
\begin{eqnarray*}
	N \times M & \to & \ZZ\\
	(\lambda , \chi) & \mapsto & \langle \lambda, \chi\rangle := m, \mbox{ where } (\chi \circ \lambda) (t) = t^{m}.
\end{eqnarray*}

For a finite dimensional $G$-representation $V$, consider the induced $T$-action on $V$. Then there is a unique decomposition of $V$ as a direct sum of eigenspaces 
\[
	V = \bigoplus_{\chi \in M}V_{\chi}
\]
where $V_{\chi} = \{v \in V\;|\; \lambda(t) \cdot v = t^{\langle \lambda, \chi\rangle}v \text{ for all }\lambda \in N\}$. 

\begin{definition}\label{def:state}
The \emph{state} of $V$ is $\Xi_{V} := \{\chi \in M\;|\; V_{\chi} \ne 0\} \subset M$. For any $x \in \PP V^{*}$, the \emph{state} of $x$ is 
\[
	\Xi_{x} := \{\chi \in \Xi_{V}\;|\; \exists s \in V_{\chi},\, s(x) \ne 0\} \subset \Xi_{V}.
\]
\end{definition}
Note that the above definition depends on the choice of maximal torus $T$ (but not of the choice of basis for a given $T$). Since our second step above ultimately aims to describe the $G$-orbits of unstable/non-stable points in a coordinate-free way, this is inconsequential for the whole program. Note further that for a general $x \in \PP V^{*}$, $\Xi_{x} = \Xi_{V}$. 

\begin{remark}\label{rem:basicoperation}
Let $V$ and $W$ be two finite dimensional $T$-representations. It is straightforward to verify that $\Xi_{V \oplus W} = \Xi_{V} \cup \Xi_{W}$ and $\Xi_{V\otimes W}$ is the Minkowski sum of $\Xi_{V}$ and $\Xi_{W}$. The state of the wedge product $V \wedge W$ corresponds to a `truncation' of $\Xi_{V \otimes W}$. 
\end{remark}

Any nontrivial $\lambda \in N_{\RR}$ defines a linear functional $\ell_{\lambda} : M_{\RR} \to \RR$ by the formula  $\ell_{\lambda}(\chi)=\langle \lambda, \chi\rangle$.

\begin{definition}\label{def:unstablestate}
Fix a finite dimensional $T$-representation $V$. Let $\lambda \in N_{\RR}$. For any $c \in \RR$, we may define 
\[
	\Xi_{V, \lambda \ge c} := \{\chi \in \Xi_{V}\;|\; \langle \lambda, \chi \rangle \ge c\}.
\]
Similarly, 
\[
	\Xi_{V, \lambda > c} := \{\chi \in \Xi_{V}\;|\; \langle \lambda, \chi \rangle > c\}.
\]
We define $\Xi_{V, \lambda = c} := \Xi_{V, \lambda \ge c} \setminus \Xi_{V, \lambda > c}$. 
\end{definition}

We may restate the Hilbert-Mumford criterion (Theorem \ref{thm:HilbertMumford}) for a torus $T$, in terms of states. For a set $S \subset M_{\RR}$, the convex hull of $S$ is denoted by $\Conv(S)$. 

\begin{theorem}[\protect{Hilbert-Mumford criterion, third version \cite[Theorem 9.2]{Dol03}}]\label{thm:HilbertMumfordII}
\begin{enumerate}
\item A point $x \in \PP V^{*}$ is semi-stable with respect to $T$ if and only if $\Conv(\Xi_{x})$ contains the trivial character.
\item A point $x \in \PP V^{*}$ is stable with respect to $T$ if and only if the interior of $\Conv(\Xi_{x})$ contains the trivial character. 
\end{enumerate}
\end{theorem}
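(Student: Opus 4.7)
The plan is to deduce this version from the second form of the Hilbert--Mumford criterion (Theorem \ref{thm:HilbertMumford2}) by translating the numerical function $\mu(x,\lambda)$ into a statement about linear functionals on $\Conv(\Xi_x)$, and then invoking a standard separation argument from convex geometry.

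First I would fix a maximal torus $T$ and decompose $V = \bigoplus_{\chi \in \Xi_V} V_\chi$ into weight spaces. Choosing a basis $\{s_0,\ldots,s_n\}$ of $V$ made of weight vectors, with $s_i \in V_{\chi_i}$, gives $\lambda(t)\cdot s_i = t^{\langle \lambda, \chi_i\rangle} s_i$ for every $\lambda \in N$. Comparing with Definition \ref{def:mu}, and noting that $\chi \in \Xi_x$ is equivalent to the existence of some basis vector $s_i$ of weight $\chi$ with $s_i(x)\neq 0$, I get the identity
\[
    \mu(x,\lambda) \;=\; \min_{\chi \in \Xi_x}\, \langle \lambda, \chi\rangle.
\]
By Theorem \ref{thm:HilbertMumford2} restricted to $T$, semistability (resp.\ stability) of $x$ is equivalent to $\mu(x,\lambda)\leq 0$ (resp.\ $\mu(x,\lambda)<0$) for every nontrivial one-parameter subgroup $\lambda \in N\setminus\{0\}$, and by homogeneity this carries over to every nonzero $\lambda \in N_\RR$.

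The remaining task is purely convex-geometric: I must show that the trivial character $0 \in M_\RR$ lies in $\Conv(\Xi_x)$ (resp.\ in its interior) if and only if $\min_{\chi \in \Xi_x}\langle \lambda,\chi\rangle \leq 0$ (resp.\ $< 0$) for every nonzero $\lambda \in N_\RR$. One direction is immediate: writing $0 = \sum t_\chi \chi$ as a convex combination, $\sum t_\chi \langle\lambda,\chi\rangle = 0$ forces some summand to be $\leq 0$, and if $0$ is interior, then for any nonzero $\lambda$ the hyperplane $\{\langle\lambda,\cdot\rangle=0\}$ through $0$ must meet the open cone where $\chi$'s push the minimum strictly negative. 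The converse direction is the separating hyperplane theorem: if $0 \notin \Conv(\Xi_x)$ there is a linear functional strictly separating $0$ from the (finite, hence compact) polytope $\Conv(\Xi_x)$, yielding a $\lambda \in N_\RR$ with $\langle\lambda,\chi\rangle>0$ for all $\chi \in \Xi_x$; if $0$ lies on the boundary, a supporting hyperplane provides a nonzero $\lambda$ with $\langle\lambda,\chi\rangle\geq 0$ for all $\chi$.

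The only mild obstacle is that the Hilbert--Mumford criterion requires $\lambda$ to come from the lattice $N$, not from $N_\RR$. However, $\Xi_x$ is a finite subset of the lattice $M$, so $\Conv(\Xi_x)$ is a rational polytope; any separating or supporting functional can therefore be chosen rational, and after clearing denominators, integral. This justifies the passage between $N$ and $N_\RR$ and completes the equivalence in both (1) and (2).
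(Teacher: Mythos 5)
Your argument is correct. Note that the paper does not prove this statement at all --- it is quoted from \cite[Theorem 9.2]{Dol03} --- and your derivation is essentially the standard one found there: diagonalize the $T$-action by a weight basis to get $\mu(x,\lambda)=\min_{\chi\in\Xi_x}\langle\lambda,\chi\rangle$, apply the numerical criterion for the torus, and finish with the separating/supporting hyperplane theorem, using that $\Conv(\Xi_x)$ is a rational polytope to replace real functionals by integral one-parameter subgroups in $N$. Two cosmetic points: the numerical criterion you need is Theorem \ref{thm:HilbertMumford} applied to the reductive group $G=T$ (invoking Theorem \ref{thm:HilbertMumford2} ``restricted to $T$'' amounts to the same thing, since $T$ is its own maximal torus, but it is cleaner to cite the first version); and in the stability statement one must exclude the trivial $\lambda$, which you do, and your boundary case should be read as covering both the full-dimensional situation (a rational facet-supporting hyperplane through $\chi_0$) and the degenerate one where $\Conv(\Xi_x)$ spans a proper rational subspace, so that the topological interior is empty and a $\lambda$ vanishing on that subspace gives $\mu=0$.
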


By the perfect pairing between $N$ and $M$, each one-parameter subgroup $\lambda$ induces a hyperplane in $M$ with the sign of $\langle \lambda, \xi\rangle$ being positive, zero or negative, depending whether the character $\xi$ is `over', `on' or `under' the hyperplane induced by $\lambda$, respectively. Thus, theorems \ref{thm:HilbertMumford} and \ref{thm:HilbertMumfordII} imply the following result.

\begin{corollary}\label{cor:unstability}
\begin{enumerate}
\item A point $x \in \PP V^{*}$ is unstable with respect to $T$ if and only if $\Xi_{x} \subset \Xi_{V, \lambda > 0}$ for some $\lambda \in N$. 
\item A point $x \in \PP V^{*}$ is non-stable with respect to $T$ if and only if $\Xi_{x} \subset \Xi_{V, \lambda \ge 0}$ for some $\lambda \in N$. 
\end{enumerate}
\end{corollary}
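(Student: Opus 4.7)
The plan is to deduce both statements directly from Theorem \ref{thm:HilbertMumfordII} by a standard convex-separation argument, taking some care to ensure that the separating functional is integral.

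For (1), observe that by Theorem \ref{thm:HilbertMumfordII}(1), $x$ is unstable with respect to $T$ if and only if the trivial character, viewed as the origin of $M_{\RR}$, does not lie in the compact convex polytope $P := \Conv(\Xi_{x})$. Since $P$ is closed and convex and $0 \notin P$, the separating hyperplane theorem yields a nonzero linear functional $\ell : M_{\RR} \to \RR$ with $\ell(\chi) > 0$ for every $\chi \in \Xi_{x}$. Because $\Xi_{x}$ is a finite subset of the lattice $M$, the feasibility condition ``$\ell(\chi) > 0$ for all $\chi \in \Xi_{x}$'' is a system of strict linear inequalities with rational coefficients on the variable $\ell$; hence it admits a rational solution, which after clearing denominators yields an integral $\lambda \in N$ with $\langle \lambda, \chi \rangle > 0$ for all $\chi \in \Xi_{x}$. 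This says exactly $\Xi_{x} \subset \Xi_{V, \lambda > 0}$. Conversely, if $\Xi_{x} \subset \Xi_{V, \lambda > 0}$, then $\ell_{\lambda}$ is strictly positive on each extreme point of $P$, hence on $P$, so $0 \notin P$ and $x$ is unstable.

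For (2), Theorem \ref{thm:HilbertMumfordII}(2) says $x$ is non-stable iff $0 \notin \intr P$. If $P$ is not full-dimensional, then $P$ is contained in an affine hyperplane, which (since $\Xi_{x} \subset M$ is finite and rational) may be taken rational, and we may choose $\lambda \in N$ normal to this hyperplane with $\langle\lambda, \chi\rangle \ge 0$ for all $\chi \in \Xi_{x}$ (replacing $\lambda$ by $-\lambda$ if necessary). Otherwise $P$ is full-dimensional and $0$ lies on $\partial P$ or outside $P$; either way a supporting hyperplane of $P$ through (or separating from) the origin produces a nonzero $\ell$ with $\ell(\chi) \ge 0$ on $\Xi_{x}$, which again may be taken rational by the same linear-programming feasibility argument, and so integral. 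The converse is immediate: if $\Xi_{x} \subset \Xi_{V, \lambda \ge 0}$ with $\lambda \ne 0$, then $P$ is contained in the closed half-space $\{\ell_{\lambda} \ge 0\}$, whose interior in $M_{\RR}$ does not contain $0$.

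The substantive geometric input is the separating/supporting hyperplane theorem; the one subtlety worth singling out as the main obstacle is the passage from $\lambda \in N_{\RR}$ to $\lambda \in N$, which is what makes the statement useful in conjunction with the Hilbert-Mumford criterion (where $\lambda$ must be an honest one-parameter subgroup). This rationality step is handled by the finiteness of $\Xi_{x}$ together with its integrality: both the strict system in (1) and the weak system in (2) are rational, so feasibility over $\RR$ implies feasibility over $\QQ$, and hence over $\ZZ = N$ after scaling.
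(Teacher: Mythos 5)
Your argument is correct, but it takes a different route from the paper. The paper obtains the corollary as a direct reformulation of the Hilbert--Mumford criterion (Theorem \ref{thm:HilbertMumford}) applied to the torus: for $\lambda \in N$ one has $\mu(x,\lambda) = \min_{\chi \in \Xi_x}\langle \lambda, \chi\rangle$ by Definition \ref{def:mu}, so ``there exists $\lambda$ with $\mu(x,\lambda) > 0$ (resp.\ $\ge 0$)'' is literally the containment $\Xi_x \subset \Xi_{V,\lambda > 0}$ (resp.\ $\Xi_{V,\lambda \ge 0}$); here $\lambda$ is already an honest integral one-parameter subgroup, so no separation or rationality step is needed. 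You instead start from the convex-hull form (Theorem \ref{thm:HilbertMumfordII}), run a separating/supporting-hyperplane argument, and then restore integrality of $\lambda$ by rational feasibility --- in effect re-deriving the bridge between the two versions of the criterion rather than quoting the version in which the one-parameter subgroup is already integral. Both are sound: the paper's route is shorter and makes the corollary a one-line translation, while yours is self-contained given only the polytope criterion and isolates the genuine content, namely that $\lambda$ may be taken in $N$ rather than $N_\RR$. One precision you should add in part (2): the weak system $\langle\lambda,\chi\rangle \ge 0$ for all $\chi \in \Xi_x$ always has the trivial solution $\lambda = 0$, so density of $\QQ$ in an open set (which is what handles the strict system in (1)) does not by itself produce a \emph{nonzero} rational solution; you need that the solution set is a rational polyhedral cone, hence spanned by rational rays (equivalently, perturb $\lambda$ within the relative interior of that cone), after which scaling gives $\lambda \in N \setminus \{0\}$. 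With that remark, and noting as you implicitly do that $\lambda$ in statement (2) must be nontrivial, your case analysis is complete.
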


\begin{remark}\label{rmk:Liealg}
In addition to Corollary \ref{cor:unstability}, we can observe that $T$-stability is determined by the set of weights of the given $G$-representation $V$. In other words, it can be described by the associated Lie algebra $\mathfrak{g}$. Thus, any finite extension and finite quotient of an algebraic group induce the same (semi-)stable locus. For example, if one has a $\mathrm{PGL}_n$-action, one may replace it by a compatible $\SL_n$-action.
\end{remark}

Since $\Xi_{V}$ is a finite set of points, it is sufficient to check (semi)-stability with respect to  finitely many one-parameter subgroups. This explains the finiteness statement in Theorem \ref{thm:mainthm} even for arbitrary reductive group actions.

\begin{corollary}\label{cor:finite}
There is a finite set $\Lambda_{ss} := \{\lambda_{i}\}_{i \in I}$ of one-parameter subgroups such that $x \in \PP V^{*}$ is unstable with respect to $T$ if and only if $\Xi_{x} \subset \Xi_{V, \lambda_{i} > 0}$ for some $i \in I$. Equivalently, there is a finite set $P_{ss} := \{\Xi_{V, \lambda_{i} > 0}\}_{i \in I}$ of maximal unstable states. And there is a finite set $\Lambda_{s} := \{\lambda_{j}\}_{j \in J}$ of one-parameter subgroups such that $x \in \PP V^{*}$ is non-stable with respect to $T$ if and only if $\Xi_{x} \subset \Xi_{V, \lambda_{j} \ge 0}$ for some $j \in J$. Equivalently, there is a finite set $P_{s} := \{\Xi_{V, \lambda_{i} > 0}\}_{i \in I}$ of maximal  non-stable states. 
\end{corollary}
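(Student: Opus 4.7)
The plan is to derive this directly from Corollary \ref{cor:unstability} by exploiting the finiteness of $\Xi_V$. Since $V$ is a finite-dimensional $T$-representation, $\Xi_V \subset M$ is a finite set, so its power set $2^{\Xi_V}$ is finite.

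First I would consider the map
\[
\Phi : N \longrightarrow 2^{\Xi_V}, \qquad \lambda \longmapsto \Xi_{V, \lambda > 0},
\]
whose image $\im \Phi$ is automatically finite. Let $P_{ss}$ be the collection of those elements of $\im \Phi$ that are maximal under inclusion, and for each $S \in P_{ss}$ pick one $\lambda_S \in \Phi^{-1}(S)$, setting $\Lambda_{ss} := \{\lambda_S : S \in P_{ss}\}$. By Corollary \ref{cor:unstability}(1), a point $x \in \PP V^*$ is unstable with respect to $T$ if and only if $\Xi_x \subseteq \Xi_{V, \lambda > 0}$ for some $\lambda \in N$, i.e.\ if and only if $\Xi_x \subseteq S$ for some $S \in \im \Phi$. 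Since every such $S$ is contained in some maximal element $S' \in P_{ss}$ and containment is transitive, this is equivalent to $\Xi_x \subseteq \Xi_{V, \lambda_i > 0}$ for some $\lambda_i \in \Lambda_{ss}$, which is the desired characterization.

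The non-stable case is entirely analogous: apply the same reasoning to the map $\Psi : N \to 2^{\Xi_V}$, $\lambda \mapsto \Xi_{V, \lambda \ge 0}$, let $P_s$ be the collection of maximal elements of $\im \Psi$, extract one-parameter subgroups to form $\Lambda_s$, and invoke Corollary \ref{cor:unstability}(2) to conclude.

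There is no substantive obstacle in this argument. The essential content is already present in Corollary \ref{cor:unstability}; the present statement is merely its repackaging in a form that makes explicit the finite combinatorial data ($P_{ss}$, $P_s$) on which the algorithms of the paper operate. The only point worth a sentence of care is the interplay between $N$ and $N_\RR$: Definition \ref{def:unstablestate} permits real one-parameter directions, but since Corollary \ref{cor:unstability} is stated for $\lambda \in N$, one may restrict $\Phi$ and $\Psi$ to the integer lattice from the outset and nothing is lost. The finiteness itself is the elementary observation that a finite set has only finitely many subsets.
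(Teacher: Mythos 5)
Your argument is correct and is essentially the paper's own: the paper derives Corollary \ref{cor:finite} immediately from Corollary \ref{cor:unstability} together with the observation that $\Xi_V$ is finite, so only finitely many subsets of the form $\Xi_{V,\lambda>0}$ or $\Xi_{V,\lambda\ge 0}$ can occur. Your write-up merely makes explicit the selection of maximal states and representative one-parameter subgroups, which the paper leaves implicit.
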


Therefore for the computation of the (semi-)stable locus, the following question is the first step. 

\begin{question}\label{que:algorithm}
Find \emph{efficient} algorithms to determine $P_{ss}$ and $P_{s}$. 
\end{question}

In the next section we present algorithms to find these two sets of one-parameter subgroups.
For any finite dimensional representation $V$ of a semisimple group $G$, $\Xi_{V}$ has a Weyl group symmetry around the origin. This group action is quite rich, allowing us to  reduce the size of the problem significantly. 

Recall that the \emph{Weyl group} $W$ of a semisimple group $G$ is defined as $W = N_{G}(T)/T$ where $N_{G}(T)$ is the normalizer of $T$ in $G$. Alternatively, $W$ may be encoded in the root datum of $G$. $W$ acts linearly on both $N$ and $M$, and induces an action on $\Xi_{V}$. The Weyl group action on any $G$-representation is induced from the action of $G$, in other words, the `coordinate change' by the group $G$. In particular, $W$ acts on the set of characters of any $G$-representation $V$ as reflections, so it is linearly extended to the actions on $M_{\RR}$ and $N_{\RR}$. 
Choose a general hyperplane on $M_\RR$ and take the set $R_+$ of roots on the one half of the hyperplane (they are called positive roots). There is a unique basis $\Delta \subset R_+$ of $M_\RR$ such that any positive root can be written as a nonnegative $\ZZ$-linear combination of $\Delta$ \cite[Section 10.1]{Humphreys}. This basis $\Delta$ is called a \emph{base}. Then the \emph{fundamental chamber} is the intersection of half-planes $\bigcap_{\alpha \in \Delta}\langle -, \alpha\rangle \ge 0$ \cite[Section 10.1]{Humphreys}. Note that a choice of a fundamental chamber depends on a choice of a base. Then $W$ acts transitively on the set of fundamental chambers \cite[Corollary D.32]{FH91}. 

Therefore, if we denote one fundamental chamber of the $W$-action on $N_{\RR}$ by $F$, then it is sufficient to find the maximal elements of the set $\{\Xi_{V, \lambda \ge 0}\}$ such that $\lambda \in F$, because other maximal elements will be obtained by applying the Weyl group symmetry. Thus, Question \ref{que:algorithm} is reduced to the following.

\begin{question}\label{que:algorithmrefined}
Fix a fundamental chamber $F$ of $N_{\RR}$. Let $P_{s}^{F}$ be the set of maximal elements in $\{\Xi_{V, \lambda \ge 0}\}$ such that $\lambda \in F$, and $P_{ss}^{F}$ be the set of maximal elements in $\{\Xi_{V, \lambda > 0}\}$ such that $\lambda \in F$. Find effective algorithms to calculate $P_{s}^{F}$ and $P_{ss}^{F}$.
\end{question}

Since a base in $M_\RR$ is a basis of $M_\RR$, the following fact is immediate from the definition
$$F = \bigcap_{\alpha \in \Delta}\langle -, \alpha\rangle \ge 0.$$

\begin{lemma}%
\label{lemma:semisimple-F}
The fundamental chamber $F$ is a simplicial cone. Thus, any vector in $F$ can be written uniquely as a non-negative linear combination of its ray generators.
\end{lemma}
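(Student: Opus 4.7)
The plan is to exploit the fact, recalled just before the lemma, that a base $\Delta \subset M_\RR$ is by definition a basis of the vector space $M_\RR$. Combined with the perfect pairing $N_\RR \times M_\RR \to \RR$, this immediately produces a dual basis of $N_\RR$ whose generators realise $F$ as a simplicial cone.

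Concretely, write $\Delta = \{\alpha_1, \ldots, \alpha_r\}$, where $r = \dim M_\RR = \dim N_\RR$ is the rank. Because the pairing $\langle -, -\rangle : N_\RR \times M_\RR \to \RR$ is perfect and $\Delta$ is a basis of $M_\RR$, there is a unique basis $\{\lambda_1, \ldots, \lambda_r\}$ of $N_\RR$ characterised by
\[
\langle \lambda_i, \alpha_j\rangle = \delta_{ij}.
\]
Given any $v \in N_\RR$, expand $v = \sum_{i=1}^r c_i \lambda_i$; then $c_i = \langle v, \alpha_i\rangle$. Reading off the defining inequalities
\[
F = \bigcap_{\alpha \in \Delta} \{\, v \in N_\RR : \langle v, \alpha\rangle \ge 0\,\}
\]
shows that $v \in F$ if and only if $c_i \ge 0$ for every $i$. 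Hence $F$ is precisely the non-negative cone on the basis $\{\lambda_1, \ldots, \lambda_r\}$, i.e.\ a simplicial cone, and the $\lambda_i$ are its ray generators.

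For the uniqueness claim, if $\sum c_i \lambda_i = \sum c'_i \lambda_i$ with all $c_i, c'_i \ge 0$, then $\sum (c_i - c'_i)\lambda_i = 0$; since $\{\lambda_i\}$ is a basis of $N_\RR$, we conclude $c_i = c'_i$ for all $i$. This gives the uniqueness of the non-negative expansion in terms of the ray generators.

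There is no real obstacle here: the entire content of the lemma is packaged in the word ``basis'' inside the definition of a base of a root system. The only thing to verify is that the ray generators of $F$ coincide with the dual basis of $\Delta$, which is a routine linear-algebra check via the perfect pairing. In particular one does not need to invoke any deeper root-system theory (such as properties of the Weyl group, the choice of positive roots, or the classification of fundamental chambers) beyond what is already cited from \cite[Section 10.1]{Humphreys}.
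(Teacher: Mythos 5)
Your argument is correct and is essentially the same as the paper's, which simply declares the lemma immediate from the facts that $\Delta$ is a basis of $M_{\mathbb{R}}$ and that $F = \bigcap_{\alpha \in \Delta}\{\langle -, \alpha\rangle \ge 0\}$; you have merely written out that implicit step by exhibiting the dual basis of $\Delta$ under the perfect pairing as the ray generators (up to positive scaling). No gap, and no different route.
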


\begin{remark}\label{rmk:reductiveexpensive}
If the group $G$ is not semisimple, then the state of $V$ need not have any symmetry, and then the computation can be significantly more expensive.  For instance, if $G = T$, any finite set $\Xi \subset M$ can be $\Xi_V$ for some representation $V$. So we cannot expect any symmetry on $\Xi_V$, and we need to compute the full sets $P_s$ and $P_{ss}$.
\end{remark}

\section{Algorithms}\label{sec:algorithm}

In this section, we describe algorithms to calculate two finite sets $P_{ss}^{F}$ and $P_{s}^{F}$ of maximal unstable states and of maximal non-stable states, described in Question \ref{que:algorithmrefined}.

Let $T\leqslant G$ be a choice of a maximal torus in a semisimple group $G$, and let $V$ be a finite dimensional $G$-representation. As before, $N$ is the lattice of one parameter subgroups of $T$, and $M$ is the lattice of characters, and $d = \mathrm{rank} \;N = \mathrm{rank}\; M$. If we denote by $\mathfrak{g}$ the Lie algebra associated to $G$, then $M$ is naturally identified with the weight lattice of $\mathfrak{g}$. Let $F$ be a fixed fundamental chamber in $N_{\RR}$  with respect to the Weyl group action. 

For the GIT quotient $\PP V^{*}\git G$, we need to calculate two finite sets $P_{ss}^{F}$ and $P_{s}^{F}$. The input of the algorithm is $\Xi_{V}
$, the set of characters of $V$ in $M$. The state $\Xi_{V}$ can be calculated using standard formulae in representation theory. For instance, in \texttt{SageMath} \cite{sagemath}, the `Weyl Character Ring' package can compute $\Xi_{V}$.

\subsection{Stable locus}\label{ssec:stablilityalgorithm}

A simple but important observation is that for any maximal $\Xi_{V, \lambda \ge 0}$, the set $\Xi_{V, \lambda = 0}$ must have at least $(d-1)$ linearly independent characters. Otherwise, by perturbing $\lambda$ by $\lambda'$, we would be able to obtain a strictly larger state $\Xi_{V, \lambda' \ge 0}$. Therefore, we have the following outline of an  algorithm. 

\begin{enumerate}
\item Let $\cC$ be the set of all $(d-1)$ linearly independent subsets of characters
\[
    \{\chi_{1}, \chi_{2}, \ldots, \chi_{d-1}\} \subset \Xi_{V}.
\]
\item For each subset $I \in \cC$, compute a nontrivial $\lambda \in N$ such that $\langle \lambda, \chi_{i}\rangle = 0$ for all $\chi_{i} \in I$. By the linearly independence of $I$, up to a scalar multiple, $\lambda$ is unique.  Let $\Lambda$ be the set of such $\lambda$'s which is in $F$. 
\item For each $\lambda \in \Lambda$, compute $\Xi_{V, \lambda \ge 0}$. Let $\cS$ be the set of such $\Xi_{V, \lambda \ge 0}$'s. 
\item Let $\cS_{m} \subset \cS$ be the set of maximal elements with respect to the inclusion order. Then $P_{s}^{F} \subset \cS_{m}$.
\end{enumerate}

\begin{remark}
Note that $\cS_{m}$ is the set of maximal elements in $\{\Xi_{V, \lambda \ge 0}\}_{\lambda \in F}$, while $P_{s}^{F}$ is the set of maximal elements in $\{\Xi_{V, \lambda \ge 0}\}$ such that $\lambda \in F$. Clearly $P_{s}^{F} \subset \cS_{m}$. We can compute $P_{s}^{F}$ from $\cS_{m}$ effectively. We describe an algorithm for this later. 
\end{remark}

In general the algorithm outlined above will be slow, because the set $\cC$ is very large. However, we can improve its performance by reducing the number of characters that need to be considered. We do not need to consider the whole set $\Xi_{V}$ of characters to calculate the set of subsets; instead, it is enough to use a proper subset $\Xi_{V}^{E, s} \subset \Xi_{V}$ that we call the \emph{essential} characters for the stability calculation. We give more details below.

Each character $\chi \in \Xi_{V} \setminus \{0\}$ defines a hyperplane $H_{\chi} := \{\lambda \in N_{\RR}\;|\; \langle \lambda, \chi \rangle = 0\}$ on $N_{\RR}$. Suppose that $H_{\chi} \cap F = \{0\}$. Then for every $(d-1)$ subset of characters $I$ which contains $\chi$, the one-parameter subgroup $\lambda$ that $I$ determines is not on $F$ (because it is on $H_{\chi}$). Therefore, we may discard such $\chi$. 

\begin{proposition}\label{prop:differenceforstability}
Let $\gamma_{1}, \gamma_{2}, \ldots, \gamma_{d}$ be ray generators of $F$. For $\chi \in \Xi_{V} \setminus \{0\}$, $H_{\chi} \cap F \ne \{0\}$ if and only if
\begin{equation}\label{eqn:differenceforstability}
	\chi \in \bigcup_{i=1}^{d}\Xi_{V, \gamma_{i} \ge 0} \setminus \bigcap_{i=1}^{d}\Xi_{V, \gamma_{i} > 0}.
\end{equation}
\end{proposition}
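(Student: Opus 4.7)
The plan is to exploit the simplicial cone structure of $F$ guaranteed by Lemma \ref{lemma:semisimple-F}, which lets us write any $\lambda \in F$ uniquely as $\lambda = \sum_{i=1}^{d} c_i \gamma_i$ with all $c_i \ge 0$. Under the pairing, $\langle \lambda, \chi \rangle = \sum_{i} c_i \langle \gamma_i, \chi \rangle$, and the question of whether $H_\chi \cap F$ contains a nonzero vector reduces to a sign analysis of the scalars $\langle \gamma_i, \chi \rangle$.

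First I would rewrite the right-hand side of \eqref{eqn:differenceforstability} as a logical condition on these signs. By the definitions in Definition \ref{def:unstablestate}, $\chi$ lies in $\bigcup_{i} \Xi_{V, \gamma_i \ge 0}$ exactly when some $\langle \gamma_i, \chi \rangle \ge 0$, and $\chi$ fails to lie in $\bigcap_{i} \Xi_{V, \gamma_i > 0}$ exactly when some $\langle \gamma_j, \chi \rangle \le 0$. Thus the set-difference condition is equivalent to:
\[
	(\exists\, i\ \text{with}\ \langle \gamma_i, \chi\rangle \ge 0)\ \ \text{and}\ \ (\exists\, j\ \text{with}\ \langle \gamma_j, \chi\rangle \le 0).
\]

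For the forward direction, suppose $\lambda = \sum c_i \gamma_i \in F \cap H_\chi$ is nonzero, so $c_i \ge 0$, some $c_k > 0$, and $\sum c_i \langle \gamma_i, \chi \rangle = 0$. If every $\langle \gamma_i, \chi\rangle$ were strictly positive, the sum would be strictly positive, contradicting vanishing; hence some $\langle \gamma_j, \chi \rangle \le 0$. Symmetrically, some $\langle \gamma_i, \chi \rangle \ge 0$. For the reverse direction, assume the sign condition. If some $\gamma_i$ satisfies $\langle \gamma_i, \chi \rangle = 0$, then $\gamma_i \in F \cap H_\chi \setminus \{0\}$ and we are done. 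Otherwise the two existence statements must be witnessed by strict inequalities: pick $\gamma_i$ with $a := \langle \gamma_i, \chi\rangle > 0$ and $\gamma_j$ with $b := \langle \gamma_j, \chi \rangle < 0$, and set $\lambda := (-b)\gamma_i + a\gamma_j$. The coefficients are strictly positive, so $\lambda \in F \setminus \{0\}$, and $\langle \lambda, \chi \rangle = -ab + ab = 0$, giving a nonzero element of $F \cap H_\chi$.

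There is no real obstacle here beyond careful bookkeeping: the argument is an elementary consequence of the simplicial structure of $F$. The only subtle point is ensuring that the witnesses $\gamma_i$ and $\gamma_j$ used in the strict case can be chosen distinct (which is automatic since their pairings with $\chi$ have opposite strict signs), so that $\lambda$ is genuinely nonzero as a nonnegative combination.
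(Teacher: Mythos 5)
Your proof is correct and follows essentially the same route as the paper: translate membership in \eqref{eqn:differenceforstability} into sign conditions on the pairings $\langle \gamma_i, \chi\rangle$ via the simplicial decomposition of $F$ from Lemma \ref{lemma:semisimple-F}, and in the converse direction produce a nonzero element of $H_\chi \cap F$ as a positive combination of two generators with opposite-sign pairings. The only cosmetic difference is that you argue the forward implication directly from a nonzero $\lambda \in H_\chi \cap F$, whereas the paper phrases it contrapositively; the content is the same.
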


\begin{proof}
If $\chi \in \bigcap_{i=1}^{d}\Xi_{V, \gamma_{i} > 0}$, then $\langle \gamma_{i}, \chi \rangle > 0$ for all $i$. Since by Lemma \ref{lemma:semisimple-F}, any $\lambda \in F$ can be written uniquely as a non-negative linear combination of $\{\gamma_{i}\}$, we have that $\langle \lambda, \chi \rangle > 0$ for all $\lambda \in F \setminus \{0\}$. Therefore $H_{\chi} \cap F = \{0\}$. If $\chi \notin \bigcup_{i=1}^{d}\Xi_{V, \gamma_{i} \ge 0}$, then $\langle \gamma_{i}, \chi \rangle < 0$ for all $i$, and hence $\langle \lambda, \chi \rangle < 0$ for all $\lambda \in F \setminus \{0\}$. Thus $H_{\chi} \cap F = \{0\}$. Thus $H_{\chi} \cap F \ne \{0\}$ implies \eqref{eqn:differenceforstability}.

Conversely, if \eqref{eqn:differenceforstability} holds, then there is one $\gamma_{i}$ such that $\langle \gamma_{i}, \chi\rangle \ge 0$ and there is one $\gamma_{j}$ such that $\langle \gamma_{j}, \chi \rangle \le 0$. By taking a nontrivial positive linear combination of $\gamma_{i}$ and $\gamma_{j}$, we may find $\lambda \in F \setminus \{0\}$ such that $\langle \lambda, \chi \rangle = 0$. Then $\lambda \in H_{\chi} \cap F$. 
\end{proof}

Another observation is that if $\chi_{1}, \chi_{2} \in \Xi_{V}$ are proportional to each other, then we may discard one of them. Indeed, for any subset $J$ of size $d-2$, $I_{1} := J \cup \{\chi_{1}\}$ and $I_{2} := J \cup \{\chi_{2}\}$ define the same $\lambda \in N_{\RR}$, hence the same $\Xi_{V, \lambda \ge 0}$. Thus, we define:

\begin{definition}
    A \emph{set of essential characters} $\Xi_{V}^{E,s}$ is a maximal subset of the right hand side of \eqref{eqn:differenceforstability} where no two elements are proportional to each other.
\end{definition}
Note that $\Xi_{V}^{E,s}$ is not uniquely defined, since any character in $\Xi_{V}^{E,s}$ can be replaced by a proportional one to it and it will still satisfy the definition. However, for our purposes, this will not make a difference.

Finally, we explain how to compute $P_{s}^{F}$ from $\cS_{m}$. 

\begin{definition}\label{def:Wprime}
Let $W$ be the Weyl group of $G$ and let $\Xi_{V, \lambda \ge 0} \in \cS_{m}$. Let $W' \subset W$ be the set of all non-identity elements that move the fundamental chamber $F$ to  another cone $F'$ that intersects $F$ non-trivially, so $F \cap F' \ne \{0\}$. 
\end{definition}

\begin{remark}\label{rem:Wprime}
The only element in $W$ which preserves $F$ is the identity. For any two chambers $F$ and $F'$, there is a unique element in $W$ which maps $F$ to $F'$. Now if $F'$ intersects $F$ nontrivially, we can make a sequence of reflections that maps $F$ to $F'$ while fixing the intersection. Thus, $W' = \bigcup_{i} \mathrm{Stab}(\gamma_i) \setminus \{e\}$, where $\gamma_i$ are the generators of $F$.
\end{remark}

\begin{lemma}\label{lem:refiningmaximalset}
Let $\Xi_{V, \lambda \ge 0} \in \cS_m$. $\Xi_{V, \lambda \ge 0} \in P_{s}^{F}$ if and only if there is no $g \in W'$ and $\Xi_{V, \mu \ge 0} \in \cS_{m}$ such that $\Xi_{V, g\lambda \ge 0} \subsetneq \Xi_{V, \mu \ge 0}$.
\end{lemma}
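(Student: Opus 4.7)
The plan is to base the proof on the Weyl-equivariance of states: since $\Xi_V \subset M$ is $W$-invariant (as $V$ is a $G$-representation) and the pairing satisfies $\langle g\lambda, g\chi\rangle = \langle \lambda, \chi\rangle$, one obtains the identity
\[
    \Xi_{V, g\lambda \ge 0} \;=\; g \cdot \Xi_{V, \lambda \ge 0}
\]
as subsets of $\Xi_V$, for every $g \in W$ and every $\lambda \in N_\RR$. This identity is the central tool that lets us transport inclusions of states across Weyl chambers.

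For the easy direction, suppose there exist $g \in W'$ and $\Xi_{V, \mu \ge 0} \in \cS_m$ with $\Xi_{V, g\lambda \ge 0} \subsetneq \Xi_{V, \mu \ge 0}$. Applying $g^{-1}$ via the equivariance identity yields $\Xi_{V, \lambda \ge 0} \subsetneq \Xi_{V, g^{-1}\mu \ge 0}$, exhibiting $g^{-1}\mu \in N$ whose state strictly dominates that of $\lambda$. Hence $\Xi_{V, \lambda \ge 0}$ is not maximal in $\{\Xi_{V, \nu \ge 0}\}_{\nu \in N}$, and therefore not in $P_s^F$.

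For the converse, assume $\Xi_{V, \lambda \ge 0} \notin P_s^F$ and pick $\nu \in N$ with $\Xi_{V, \lambda \ge 0} \subsetneq \Xi_{V, \nu \ge 0}$; enlarging $\nu$ if necessary, we may assume $\Xi_{V, \nu \ge 0}$ is globally maximal. By the Weyl symmetry of the collection of globally maximal states, there exist $h \in W$ and $\mu \in F$ with $\Xi_{V, \mu \ge 0} \in \cS_m$ such that $\Xi_{V, \nu \ge 0} = \Xi_{V, h\mu \ge 0}$. Setting $g := h^{-1}$ and applying the equivariance identity produces $\Xi_{V, g\lambda \ge 0} \subsetneq \Xi_{V, \mu \ge 0}$. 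The case $g = e$ is immediately ruled out by the $\cS_m$-maximality of $\Xi_{V, \lambda \ge 0}$.

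The main obstacle I expect is ensuring that $g$ can be chosen in $W'$, i.e.\ with $gF \cap F \ne \{0\}$, rather than as an arbitrary non-identity element of $W$. To handle this I plan to replace $\nu$ by a more carefully chosen witness using the convex cone
\[
    C \;:=\; \{\nu' \in N_\RR : \Xi_{V, \nu' \ge 0} \supseteq \Xi_{V, \lambda \ge 0}\},
\]
which is the intersection of the half-spaces $\langle \cdot, \chi\rangle \ge 0$ indexed by $\chi \in \Xi_{V, \lambda \ge 0}$. Along any segment in $C$ starting from $\lambda$, the state is monotonically non-decreasing in the inclusion order, and the first point at which it strictly enlarges cannot occur while still inside $F$ (otherwise it would violate the $\cS_m$-maximality of $\Xi_{V, \lambda \ge 0}$). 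Using the simplicial structure of $F$ (Lemma \ref{lemma:semisimple-F}) together with Remark \ref{rem:Wprime}, one argues that the segment can be arranged to cross only a single wall of $F$ at the moment of strict growth, landing in a chamber of the form $g_0 F$ with $g_0 \in W'$. This intermediate witness then yields the required pair $(g,\mu)$. Carefully matching the walking argument with a witness that actually lies in $\cS_m$ is the delicate bookkeeping step of the proof.
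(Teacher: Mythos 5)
Your easy direction is fine and is essentially the paper's argument: $W$-equivariance of states, $\Xi_{V,g\lambda\ge 0}=g\cdot\Xi_{V,\lambda\ge 0}$, plus maximality of $\Xi_{V,\lambda\ge 0}$ in the full family $\{\Xi_{V,\nu\ge 0}\}_{\nu\in N}$. The problem is the converse, and there your proposal stops short of a proof. Your first attempt only produces some $g=h^{-1}\in W\setminus\{e\}$, where $hF$ is an arbitrary chamber containing a globally maximal witness, and you correctly flag that forcing $g\in W'$ is the whole content of the lemma. But the walking argument you sketch to repair this does not deliver it as stated: along the straight segment from $\lambda$ to a far witness inside your cone $C$, the state at every point contains $\Xi_{V,\lambda\ge 0}$ and is indeed non-decreasing, but it can remain \emph{equal} to $\Xi_{V,\lambda\ge 0}$ while the segment crosses several Weyl walls (the walls are cut out by roots, not by the characters that eventually witness the growth), so the first point of strict enlargement may well lie in a chamber $g_0F$ with $g_0F\cap F=\{0\}$, i.e.\ $g_0\notin W'$. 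Neither the simpliciality of $F$ (Lemma \ref{lemma:semisimple-F}) nor Remark \ref{rem:Wprime} lets you ``arrange the segment to cross only a single wall of $F$ at the moment of strict growth''; that arrangement is exactly the assertion that needs proving, and you yourself acknowledge the bookkeeping is unresolved. So the hard implication ($\Xi_{V,\lambda\ge 0}\notin P_s^F$ implies a strictly larger element of $\cS_m$ is detectable through some $g\in W'$) is missing; as written, your argument proves the lemma only with $W'$ replaced by $W\setminus\{e\}$.

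For comparison, the paper's proof of this direction is short and rests precisely on the claim you are circling: if $\Xi_{V,\lambda\ge 0}\in\cS_m$ is not globally maximal, then a strictly larger state $\Xi_{V,\mu\ge 0}$ can be found with $\mu$ in a cone $F'$ \emph{adjacent} to $F$ (that is, $F'\cap F\ne\{0\}$); the element of $W$ carrying $F'$ into $F$ then lies in $W'$ (it fixes a nonzero face of $F$, cf.\ Remark \ref{rem:Wprime}), the transported state sits inside an element of $P_s^F\subset\cS_m$, and the strict inclusion is carried over by equivariance. So you have correctly isolated the crux of the lemma, but you have neither established the adjacent-chamber claim nor provided a working substitute for it; to complete the proof you must either prove that an adjacent-chamber witness always exists or find a genuinely different mechanism for landing in $W'$.
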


\begin{proof}
Suppose that $\Xi_{V, \lambda \ge 0} \in P_{s}^{F}$. Since it is maximal in $\{\Xi_{V, \nu \ge 0}\}$ for all $\nu \in N$,
$\Xi_{V, \lambda \ge 0} \supset \Xi_{V, g\mu \ge 0}$ for all $g \in W'$ and $\Xi_{V, \mu \ge 0} \in \cS_{m}$ such that $g\mu\in F$. But $\Xi_{V, \lambda \ge 0} \supset \Xi_{V, g\mu \ge 0}$ is equivalent to $\Xi_{V, g^{-1}\lambda \ge 0} \supset \Xi_{V, \mu \ge 0}$. By Remark \ref{rem:Wprime}, $g^{-1} \in W'$. 

Conversely, suppose that $\Xi_{V, \lambda \ge 0} \notin P_{s}^{F}$. This implies that $\Xi_{V, \lambda \ge 0}$ is not maximal in $\{\Xi_{V, \nu \ge 0}\}$. This is possible if there is $\mu \in N$ so that $\Xi_{V, \lambda \ge 0} \subsetneq \Xi_{V, \mu \ge 0}$ where $\mu$ is not in $F$, but its adjacent cone $F'$. We may assume that $\Xi_{V, \mu \ge 0}$ is maximal. Since $F$ is a fundamental chamber, there is $g \in W'$ such that $g\mu \in F$. So $\Xi_{V, g\mu \ge 0} \in P_{s}^{F} \subset \cS_{m}$. Therefore $\Xi_{V, g\lambda \ge 0} \subsetneq \Xi_{V, g\mu \ge 0} \in \cS_{m}$. 
\end{proof}

By combining these ideas, we can make an optimized algorithm for the computation of $P_{s}^{F}$:

\begin{algorithm}\label{alg:Stable}
[Algorithm for the computation of $P_{s}^{F}$]
\leavevmode\\
\textbf{Input:} The state $\Xi_{V}$.\\
\textbf{Output:} The set of maximal non-stable states $P_{s}^{F}$.\\
\begin{enumerate}[label=\arabic*.]
\item $A_{0} := \Xi_{V}$
\item $A_{1} := \bigcup_{i=1}^{d}\Xi_{V, \gamma_{i} \ge 0} \setminus \bigcap_{i=1}^{d}\Xi_{V, \gamma_{i} > 0}$
\item $A_{2} := A_{1} \setminus \{0\}$
\item $A_{3} := \emptyset$
\item \textbf{for all} $\chi \in A_{2}$ \textbf{do}
\item \qquad \texttt{is\_dependent} := \texttt{false}
\item \qquad \textbf{for all} $\chi' \in A_{3}$
\item \qquad \qquad \textbf{if} $\chi = c\chi'$ \textbf{for some} $c \in \RR$ \textbf{then} \texttt{is\_dependent} := \texttt{true}
\item \qquad \textbf{if} \texttt{is\_dependent} = \texttt{false} \textbf{then} $A_{3} := A_{3} \cup \{\chi\}$
\item $\cS_{m} := \emptyset$
\item \textbf{for all} $I \in \binom{A_{3}}{d-1}$ \textbf{do}
\item \qquad \textbf{if} $I$ is linearly independent \textbf{then do}
\item \qquad \qquad Calculate $\lambda \ne 0$ such that $\langle \lambda, \chi\rangle = 0$ for all $\chi \in I$
\item \qquad \qquad \textbf{if} $\lambda \notin F$ \textbf{then} $\lambda := -\lambda$
\item \qquad \qquad \textbf{if} $\lambda \in F$ \textbf{then do}
\item \qquad \qquad \qquad Compute $\Xi_{V, \lambda \ge 0}$
\item \qquad \qquad \qquad \texttt{is\_maximal} := \texttt{true}
\item \qquad \qquad \qquad \textbf{for all} $\Xi_{V, \mu \ge 0} \in \cS_{m}$ \textbf{do}
\item \qquad \qquad \qquad \qquad \textbf{if} $\Xi_{V, \lambda \ge 0} \subset \Xi_{V, \mu \ge 0}$ \textbf{then} \texttt{is\_maximal} := \texttt{false} and \textbf{break}
\item \qquad \qquad \qquad \qquad \textbf{if} $\Xi_{V, \lambda \ge 0} \supset \Xi_{V, \mu \ge 0}$ \textbf{then} $\cS_{m} := \cS_{m} \setminus \{\Xi_{V, \mu \ge 0}\}$
\item \qquad \qquad \qquad \textbf{if} \texttt{is\_maximal} = \texttt{true} \textbf{then} $\cS_{m} := \cS_{m} \cup \{\Xi_{V, \lambda \ge 0}\}$
\item $P_{s}^{F} := \emptyset$
\item \textbf{for all} $\Xi_{V, \lambda \ge 0} \in \cS_{m}$ \textbf{do}
\item \qquad \texttt{is\_maximal} := \texttt{true}
\item \qquad \textbf{for all} $g \in W'$ \textbf{do}
\item \qquad \qquad \textbf{for all} $\Xi_{V, \mu \ge 0} \in \cS_{m}$ \textbf{do}
\item \qquad \qquad \qquad \textbf{if} $\Xi_{V, g\lambda \ge 0} \subsetneq \Xi_{V, \mu \ge 0}$ \textbf{then} \texttt{is\_maximal} := \texttt{false} and \textbf{break}
\item \qquad \textbf{if} \texttt{is\_maximal} = \texttt{true} \textbf{then} $P_{s}^{F} := P_{s}^{F} \cup \{\Xi_{V, \lambda \ge 0}\}$
\item \textbf{return} $P_{s}^{F}$
\end{enumerate}
\end{algorithm}

\begin{remark}\label{rem:reductivestable}
Algorithm \ref{alg:Stable} works for a more general reductive group, after the following modification. If the group is not semisimple, we may not expect any symmetry on $\Xi_V$, so we need to set $A_1 = A_2 = \Xi_V$ and let $F = N_\RR$. The rest of the algorithm works the same. 
\end{remark}

\subsection{Semistable locus}\label{ssec:semistabilityalgorithm}

In this section we present an algorithm to calculate the semi-stable locus. This algorithm is a generalization of the algorithm described in \cite{GMG18} (cf. \cite{GMG19, GMGZ18}), which considers a special case of $G = \SL_{r}$ and $V = \Sym^{d}\CC^{r}\otimes \Sym^{e}\CC^{r}$. 

In this section, we assume that $G$ is a semisimple group of rank $d$, and $T$ be a fixed maximal torus of $G$. 

For the semi-stable locus computation, we need one technical assumption. 

\begin{assumption}\label{ass:nonempty}
From now on, we assume that the $T$-\emph{stable} locus $\PP V^{* s}(T)$ is nonempty. Equivalently, we assume that the state $\Xi_V$ is full-dimensional and the trivial character $\chi_0$ is in $\intr\Conv(\Xi_V)$.
\end{assumption}

Assumption \ref{ass:nonempty} is true for many GIT problems, as we illustrate in the following lemmas.

\begin{lemma}\label{lem:assumptionsimplecase}
Let $G$ be a simple group. For any nontrivial $G$-representation $V$, Assumption \ref{ass:nonempty} holds. 
\end{lemma}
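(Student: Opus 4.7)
The plan is to exploit the Weyl group symmetry of the state $\Xi_V$ together with the irreducibility of the $W$-action on $M_\RR$ for a simple group $G$. Since $V$ is nontrivial as a $G$-representation, the induced action of the Lie algebra $\mathfrak{g}$ on $V$ is nonzero, so not every $T$-weight can be trivial, and there exists at least one nonzero character $\chi \in \Xi_V$. The orbit $W\chi$ is contained in $\Xi_V$, because for any lift $n \in N_G(T)$ of $w \in W$, conjugation by $n$ permutes the $T$-weight spaces of $V$.

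Let $P := \Conv(W\chi) \subseteq \Conv(\Xi_V)$. I would first show $P$ is full-dimensional. Since $P$ is $W$-invariant, its linear span $U \subseteq M_\RR$ is a $W$-invariant subspace containing the nonzero vector $\chi$. Here is where simplicity enters: the root system of a simple group is irreducible, and consequently the Weyl group representation on $M_\RR$ is irreducible (see, e.g., \cite[Chapter 21]{FH91}). Therefore $U = M_\RR$, so $P$ is full-dimensional, and a fortiori $\Conv(\Xi_V)$ is full-dimensional.

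Next I would locate the trivial character $\chi_0 = 0$ inside $P$. The centroid (barycenter) $c := \frac{1}{|W\chi|}\sum_{w \in W\chi} w$ of $P$ is $W$-invariant, and by the same irreducibility statement, the only $W$-fixed vector in $M_\RR$ is $0$. Hence $c = 0$. Since the centroid of any full-dimensional convex polytope lies in its interior, we conclude
\[
    0 = c \in \intr P \subseteq \intr \Conv(\Xi_V),
\]
which is exactly Assumption \ref{ass:nonempty}.

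The only real subtlety is ensuring that the $W$-representation on $M_\RR$ is genuinely irreducible for a simple $G$ (equivalently, the root system is irreducible); this is a standard consequence of the Dynkin type classification and is where the hypothesis that $G$ is simple (rather than merely semisimple) is used. Everything else is routine convex geometry together with the elementary observation that nonzero weights exist for any nontrivial representation.
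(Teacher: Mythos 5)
Your proof is correct and follows essentially the same route as the paper's: both arguments average a nonzero weight over the Weyl group, use that the only $W$-fixed vector in $M_\RR$ is zero (irreducibility of the reflection representation for simple $G$) to identify the barycenter with the trivial character, and conclude it is interior because the orbit affinely spans $M_\RR$; your only structural difference is working directly with the $W$-orbit of a single nonzero weight rather than first reducing to a nontrivial irreducible factor of $V$, which if anything streamlines the argument. One small caveat: since the paper works over fields of arbitrary characteristic, the existence of a nonzero weight is better justified at the group level (if all $T$-weights were trivial, $T$ would lie in the kernel, and the kernel's identity component would be a positive-dimensional normal subgroup, forcing the connected simple $G$ to act trivially) rather than via the Lie algebra, whose induced action can vanish for a nontrivial representation (e.g.\ Frobenius twists) in characteristic $p$.
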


\begin{proof}
By Theorem \ref{thm:HilbertMumfordII}, it is sufficient to show that the trivial character $\chi_0$ is in the interior of $\Conv(\Xi_V)$. 

First, assume that $G$ is a simple group and $V$ is an irreducible representation. Then $\Xi_V$ has a nontrivial character. Since the Weyl group $W$ is generated by reflections associated to roots, and the roots span $N_\RR$, $\Conv(\Xi_V)$ is top-dimensional $W$-invariant polytope. Choose any $\chi \in \Conv(\Xi_V)$ and set $\tau = \frac{1}{|W|}\sum_{g \in W} g\chi$. Then $\tau \in \Conv(\Xi_V)$ and $W$-invariant. Since the only $W$-invariant vector is zero, $\tau = \chi_0$. Note that $|W| > d$. Since $\chi_0$ is a positive linear combination of linearly dependent vectors in $\Conv(\Xi_V)$, $\chi_0$ is in the interior of $\Conv(\Xi_V)$.

When $V$ is not irreducible, let $W$ be a nontrivial irreducible factor of $V$. Then $\Xi_{W} \subset \Xi_{V}$. So $\Xi_{V}$ contains the trivial character in its interior, too.
\end{proof}

For a general semisimple group $G$, by Remark \ref{rmk:Liealg}, it is sufficient to consider the case that $G = G_1 \times \cdots \times G_k$, where $G_i$ are simple groups. 

\begin{lemma}\label{lem:assumptionproductcase}
Let $G = G_{1} \times \cdots \times G_{k}$ be a semisimple group which is a product of simple groups.
Let $V$ be a finite dimensional $G$-representation whose induced $G_{i}$-representations are nontrivial for all $1 \le i \le k$. Then $V$ satisfies Assumption \ref{ass:nonempty}.
\end{lemma}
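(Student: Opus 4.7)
The plan is to verify the reformulation of Assumption \ref{ass:nonempty} given by Theorem \ref{thm:HilbertMumfordII}, namely that the trivial character $\chi_{0}$ lies in the interior of $\Conv(\Xi_{V})$. I will argue in two stages: first, show that $\Conv(\Xi_{V})$ is full-dimensional in $M_{\RR}$; then, land $\chi_{0}$ in its interior via the Weyl-averaging trick from the proof of Lemma \ref{lem:assumptionsimplecase}.

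For the setup, I exploit the product decomposition $G = G_{1} \times \cdots \times G_{k}$. The maximal torus factors as $T = T_{1} \times \cdots \times T_{k}$, the character lattice splits as $M = \bigoplus_{i=1}^{k} M_{i}$, and the Weyl group is $W = W_{1} \times \cdots \times W_{k}$, acting on each $M_{i,\RR}$ through $W_{i}$ and trivially on the remaining factors. Decomposing $V = \bigoplus_{\alpha} V_{\alpha}$ into irreducible $G$-representations, each $V_{\alpha}$ is a tensor product $V_{\alpha,1} \otimes \cdots \otimes V_{\alpha,k}$ of irreducible $G_{i}$-representations. By Remark \ref{rem:basicoperation}, the state is the product set $\Xi_{V_{\alpha}} = \prod_{i=1}^{k} \Xi_{V_{\alpha,i}}$ under the identification $M_{\RR} = \bigoplus_{i} M_{i,\RR}$.

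For the first stage, the hypothesis that every induced $G_{j}$-representation is nontrivial guarantees, for each $j$, the existence of some index $\alpha_{j}$ such that $V_{\alpha_{j},j}$ is a nontrivial irreducible $G_{j}$-representation. Applying Lemma \ref{lem:assumptionsimplecase} to the simple group $G_{j}$ acting on $V_{\alpha_{j},j}$, the set $\Xi_{V_{\alpha_{j},j}}$ affinely spans $M_{j,\RR}$, and hence its difference set spans $M_{j,\RR}$ as a linear subspace. Now fix any choice of $\chi_{i} \in \Xi_{V_{\alpha_{j},i}}$ for $i \ne j$ and vary only the $j$-th coordinate; the pairwise differences of the resulting elements of $\Xi_{V_{\alpha_{j}}} \subset \Xi_{V}$ lie in $M_{j,\RR}$ viewed as a coordinate subspace of $M_{\RR}$, and they sweep out all of it. Running $j$ through $1, \ldots, k$, the $\RR$-span of $\Xi_{V} - \Xi_{V}$ contains each $M_{j,\RR}$ and therefore equals $M_{\RR}$, so $\Conv(\Xi_{V})$ is top-dimensional.

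For the second stage, the convex hull $\Conv(\Xi_{V})$ is $W$-invariant, and because each $W_{i}$ acts irreducibly and nontrivially on $M_{i,\RR}$ (as $G_{i}$ is simple), the only $W$-fixed point in $M_{\RR}$ is $\chi_{0}$. Pick any interior point $p$ of the now top-dimensional polytope $\Conv(\Xi_{V})$; the average $\frac{1}{|W|}\sum_{w \in W} w \cdot p$ is simultaneously $W$-invariant and interior (as a convex combination of interior points), so it must equal $\chi_{0}$, placing $\chi_{0}$ in $\intr \Conv(\Xi_{V})$ as required. The main obstacle is the first stage: a single subrepresentation $V_{\alpha_{j}}$ can be ``thick'' in only one factor $M_{j,\RR}$ (the factors $V_{\alpha_{j},i}$ for $i \ne j$ may be trivial), so full-dimensionality cannot be extracted from any single $V_{\alpha_{j}}$. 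The trick is to harvest $k$ different irreducible summands, one per simple factor, and to glue together the coordinate directions they contribute.
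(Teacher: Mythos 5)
Your proof is correct, and it follows the same overall skeleton as the paper's: reduce Assumption \ref{ass:nonempty} to the statement $\chi_0 \in \intr\Conv(\Xi_V)$ via Theorem \ref{thm:HilbertMumfordII}, feed Lemma \ref{lem:assumptionsimplecase} into each simple factor to get full-dimensionality, and finish with the Weyl-averaging trick. Where you diverge is in how the full-dimensionality is organized. The paper proves the case $k=2$ by splitting into two cases according to the shape of the irreducible summands (a single summand $V_1\otimes V_2$ nontrivial for both factors, versus two summands each nontrivial for only one factor) and then appeals to induction for general $k$; you instead run a single uniform argument for arbitrary $k$, harvesting for each $j$ one irreducible summand whose $j$-th tensor factor is nontrivial and extracting from it, via Remark \ref{rem:basicoperation}, differences spanning the coordinate subspace $M_{j,\RR}$. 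This buys you a cleaner statement-level proof: no case analysis, no induction, and it is robust to summands that are ``thick'' in several factors at once. You also slightly tighten the final step: rather than averaging an arbitrary point of $\Conv(\Xi_V)$ over $W$ and then arguing the result is interior (the route of Lemma \ref{lem:assumptionsimplecase}, which the paper reuses), you average an interior point, so that $W$-invariance plus the fact that the only $W$-fixed vector is $\chi_0$ (irreducibility of each reflection representation $M_{i,\RR}$) immediately places $\chi_0$ in the interior. Both arguments are valid; yours is marginally more economical and scales to all $k$ in one pass.
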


\begin{proof}
Here we give a proof when $G = G_{1} \times G_{2}$. The general case easily follows by induction. Since the two induced representations are nontrivial, there are two possibilities: $V$ has an irreducible factor of the form $V_{1}\otimes V_{2}$, where $V_{i}$ is a nontrivial irreducible $G_{i}$-representation, or $V$ has two irreducible factors $V_{1}$ and $V_{2}$ where $V_{i}$ is an irreducible $G_{i}$-representation and $G_{2-i}$ acts trivially. Note that $M_{\RR} \cong M_{1 \RR} \oplus M_{2 \RR}$ where $M_{i \RR}$ is the space of characters of $G_{i}$, and $W \cong W_{1} \times W_{2}$ where $W_{i}$ is the Weyl group of $G_{i}$. In each case, $\Conv(\Xi_{V})$ is top-dimensional. Therefore one can argue it in the exactly same way to the proof of Lemma \ref{lem:assumptionsimplecase}: The average of an element of $\Conv(\Xi_{V_{1}\otimes V_{2}})$ or $\Conv(\Xi_{V_{1}\oplus V_{2}})$ is $W$-invariant, thus it is a trivial character $\chi_{0}$ and it is in the interior of $\Conv(\Xi_{V})$. 
\end{proof}

\begin{remark}
That Assumption \ref{ass:nonempty} holds for a fixed maximal torus $T$ (even for all maximal tori!) does not imply the nonemptiness of the $G$-stable locus $\PP V^{*s}(G)$  when $\dim V$ is small compared to $\dim G$. For instance, consider $G = \SL_3$ and $V = \sym^2 \CC^3$. Then by Lemma \ref{lem:assumptionsimplecase}, for every maximal torus $T \subset \SL_3$, the $T$-stable locus $\PP V^{* s}(T)$ is nonempty. However, $\dim V = 6 < \dim \SL_3 = 8$, so every point on $\PP V^{*ss}$ has a positive dimensional stabilizer, hence it is non-stable. 
\end{remark}

\begin{lemma}\label{lem:maxdim}
Suppose Assumption \ref{ass:nonempty} is true. Then for any maximal unstable state $\Xi_{V, \lambda > 0}$, $\dim \Conv(\Xi_{V, \lambda > 0})\ge d-1$. Furthermore, if $\dim \Conv(\Xi_{V, \lambda > 0}) = d-1$, then $\Xi_{V, \lambda > 0}$ is not contained in a hyperplane passing through the origin. 
\end{lemma}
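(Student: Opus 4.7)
The plan is a single proof by contradiction that treats both claims uniformly. Suppose $\Xi_{V,\lambda>0}$ is a maximal unstable state that violates one of the conclusions, i.e.\ either (i) $\dim\Conv(\Xi_{V,\lambda>0})\le d-2$, or (ii) $\dim\Conv(\Xi_{V,\lambda>0})=d-1$ and $\Xi_{V,\lambda>0}$ lies in a linear hyperplane $H\subset M_\RR$ through the origin. Let $L\subset M_\RR$ be the linear span of $\Xi_{V,\lambda>0}$. In case (i), the affine hull has dimension $\le d-2$, so $\dim L\le d-1$. In case (ii), the $(d-1)$-dimensional affine hull sits inside $H$, hence equals $H$, so again $\dim L=d-1$. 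In both cases $\dim L^{\perp}\ge 1$, where $L^{\perp}\subset N_\RR$ is the perpendicular subspace under the pairing $N\times M\to \ZZ$.

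Next, I would pick a nonzero $\mu\in L^{\perp}$, so that $\langle \mu,\chi\rangle=0$ for every $\chi\in \Xi_{V,\lambda>0}$. This is where Assumption \ref{ass:nonempty} enters decisively: because the trivial character lies in $\intr \Conv(\Xi_V)$, no hyperplane through the origin can contain all of $\Xi_V$ (otherwise $\chi_0$ would be on the boundary of $\Conv(\Xi_V)$, as in the emptiness argument in Lemma \ref{lem:assumptionsimplecase}). Applied to the hyperplane $\ker\mu$, this yields a character $\chi'\in \Xi_V$ with $\langle\mu,\chi'\rangle>0$. Since $\chi'\notin L$, in particular $\chi'\notin \Xi_{V,\lambda>0}$, and therefore $\langle\lambda,\chi'\rangle\le 0$.

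Finally, I would perturb by setting $\nu:=\lambda+b\mu\in N_\RR$ for $b>0$ large. For any $\chi\in\Xi_{V,\lambda>0}$ one has $\langle\nu,\chi\rangle=\langle\lambda,\chi\rangle>0$, since $\chi\in L$ annihilates $\mu$; so $\Xi_{V,\nu>0}\supseteq \Xi_{V,\lambda>0}$. For the distinguished $\chi'$ above, $\langle\nu,\chi'\rangle=\langle\lambda,\chi'\rangle+b\langle\mu,\chi'\rangle$ becomes strictly positive once $b$ is large enough. Hence $\Xi_{V,\nu>0}\supsetneq \Xi_{V,\lambda>0}$, contradicting maximality. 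Replacing $\nu$ by a suitable rational multiple (or approximating by a lattice point in $N$) if one wants an honest one-parameter subgroup is harmless, since the open condition $\langle\nu,\cdot\rangle>0$ is preserved by small perturbations.

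I do not anticipate a serious obstacle: the whole argument is a standard perturbation trick. The only bookkeeping to watch is the passage between the affine dimension of $\Conv(\Xi_{V,\lambda>0})$ and the dimension of its linear span $L$, which is why one needs to treat (i) and (ii) as separate cases in the first paragraph. The conceptual content is that Assumption \ref{ass:nonempty} prevents $\Xi_V$ from fitting into any proper linear subspace, which always leaves room to enlarge $\Xi_{V,\lambda>0}$ whenever its linear span is itself proper.
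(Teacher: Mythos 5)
Your proof is correct and takes essentially the same route as the paper: both arguments place $\Xi_{V,\lambda>0}$ inside a proper linear subspace of $M_\RR$, invoke Assumption \ref{ass:nonempty} (full-dimensionality of $\Conv(\Xi_V)$) to find a character $\chi'$ outside it, and perturb $\lambda$ by an element of $N_\RR$ vanishing on the state and positive on $\chi'$ to contradict maximality. Your phrasing via the linear span $L$ and its perpendicular (together with the harmless replacement $\mu\mapsto -\mu$ to upgrade $\langle\mu,\chi'\rangle\neq 0$ to $>0$) is only a cosmetic variant of the paper's choice of a hyperplane $H$ and a functional $\ell$ with $\ell|_H=0$ and $\ell(\chi')=1$.
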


\begin{proof}
Suppose that $\dim \Conv(\Xi_{V, \lambda > 0}) < d-1$. Then there is a hyperplane $H \subset M_{\RR}$ such that $\Xi_{V, \lambda > 0} \subset H$. Since Assumption \ref{ass:nonempty} is true, $\dim \Conv(\Xi_{V}) = d$.
So there must be $\chi' \in \Xi_{V} \setminus H$. There is a linear functional $\ell : M_{\RR} \to \RR$ such that $\ell|_{H} = 0$ and $\ell(\chi') = 1$. Because there is a perfect pairing $N_{\RR} \times M_{\RR} \to \RR$, there is $\mu \in N_{\RR}$ such that $\ell(\chi) = \langle \mu, \chi\rangle$ for all $\chi \in M_{\RR}$. Now for $m \gg 0$, $\Xi_{V, \lambda + m\mu > 0} \supset \Xi_{V, \lambda} \cup \{\chi'\}$. It contradicts the maximality of $\Xi_{V, \lambda > 0}$, proving the first statement. 

Now suppose that $\dim \Conv(\Xi_{V, \lambda > 0}) = d - 1$. If there is a hyperplane $H \subset M_\RR$ passing through the origin and $\Xi_{V, \lambda > 0} \subset H$, then we can choose $\chi' \in \Xi_V \setminus H$ and $\ell : M_\RR \to \RR$ such that $\ell|_H = 0$ and $\ell (\chi') = 1$. Then we may argue in the same way to show the non-maximality of $\Xi_{V, \lambda > 0}$ as before. 
\end{proof}

The next proposition is the key observation for the computation of the semi-stable locus. 

\begin{proposition}\label{prop:minimumacheiveddtimes}
Suppose Assumption \ref{ass:nonempty} is true. Let $\Xi_{V, \lambda > 0}$ be a maximal unstable state. There is $\lambda ' \in N_{\RR}$ such that:
\begin{enumerate}
\item $\Xi_{V, \lambda > 0} = \Xi_{V, \lambda' > 0}$;
\item The minimum value of $\langle \lambda', \chi \rangle$ for all $\chi \in \Xi_{V, \lambda' > 0}$ is achieved at $d$ linearly independent characters $\chi_{1}, \chi_{2}, \ldots, \chi_{d} \in \Xi_{V, \lambda' > 0}$.
\end{enumerate}
\end{proposition}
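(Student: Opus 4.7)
The plan is to realize $\lambda'$ as a vertex of an auxiliary polyhedron built out of $A := \Xi_{V, \lambda > 0}$. The key insight is that maximality of $A$ as an unstable state allows us to largely forget about the characters outside of $A$, turning what looks like a two-sided separation problem into a one-sided feasibility problem.

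First I would set $m := \min_{\chi \in A}\langle \lambda, \chi\rangle$, which is strictly positive by the defining inequality of $A$. Then $\lambda/m$ lies in the polyhedron
\[
    D := \{\mu \in N_{\RR} : \langle \mu, \chi\rangle \geq 1 \text{ for all } \chi \in A\},
\]
so $D$ is non-empty. By standard linear programming, $D$ admits a vertex provided it contains no affine line, and any such line $\mu_{0} + t\eta \subset D$ would force $\langle \eta, \chi\rangle = 0$ for every $\chi \in A$. So I need $A$ to linearly span $M_{\RR}$. This is where Lemma \ref{lem:maxdim} does its job: if $\dim \Conv(A) = d$ the spanning is immediate, and if $\dim \Conv(A) = d-1$ the lemma says that the affine hyperplane containing $A$ does not pass through the origin, in which case a short convex-geometric check confirms that any $d$ affinely independent points of $A$ are automatically linearly independent in $M_{\RR}$.

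Now pick any vertex $\mu^{*}$ of $D$. By the standard characterization of vertices, $d$ of the defining constraints are active at $\mu^{*}$ with linearly independent normals: there exist linearly independent $\chi_{1}, \ldots, \chi_{d} \in A$ with $\langle \mu^{*}, \chi_{i}\rangle = 1$, while $\langle \mu^{*}, \chi\rangle \geq 1$ for every $\chi \in A$. Thus $1$ is the minimum of $\langle \mu^{*}, \cdot\rangle$ on $A$ and is achieved at the $d$ linearly independent characters $\chi_{1}, \ldots, \chi_{d}$, so condition (2) holds for $\lambda' := \mu^{*}$.

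It remains to verify $\Xi_{V, \lambda' > 0} = A$. The inclusion $A \subseteq \Xi_{V, \lambda' > 0}$ is immediate from $\langle \lambda', \chi\rangle \geq 1 > 0$ on $A$, and a strict inclusion would contradict the maximality of $A$ in the poset $\{\Xi_{V, \nu > 0}\}_{\nu}$. The maximality hypothesis is exactly what short-circuits the otherwise painful verification of the inequalities $\langle \lambda', \chi\rangle \leq 0$ for $\chi \in \Xi_{V} \setminus A$; I view this as the only substantive point in the whole argument, and everything else is convex geometry bookkeeping.
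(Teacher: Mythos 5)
Your argument is correct, and it reaches the conclusion by a route that differs in packaging from the paper's. The paper works in $M_{\RR}$: it splits into cases according to whether $\dim \Conv(\Xi_{V,\lambda>0})$ is $d$ or $d-1$ (Lemma \ref{lem:maxdim} rules out anything smaller), and in the full-dimensional case it writes $\Conv(\Xi_{V,\lambda>0})$ as an intersection of facet half-spaces, choosing a facet whose supporting half-space excludes the origin; the facet normal is the desired $\lambda'$ and the facet supplies the $d$ linearly independent characters, while the thin case is handled directly by the second statement of Lemma \ref{lem:maxdim}. You instead work in $N_{\RR}$ with the polyhedron $D=\{\mu:\langle\mu,\chi\rangle\ge 1\ \forall\chi\in A\}$ and take a vertex: line-freeness of $D$ is exactly the statement that $A$ spans $M_{\RR}$ linearly, which is where you import both halves of Lemma \ref{lem:maxdim} (the affine-hull-not-through-the-origin point in the thin case is precisely what makes $d$ affinely independent characters linearly independent), and the vertex characterization hands you the $d$ linearly independent tight characters at the positive level $1$ in one stroke. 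The two proofs are dual pictures of the same geometry --- your vertex of $D$ corresponds to the paper's chosen supporting hyperplane at positive level --- and both use the maximality of $\Xi_{V,\lambda>0}$ in the identical way to upgrade $A\subseteq\Xi_{V,\lambda'>0}$ to equality. What your version buys is uniformity (no top-level case split; the case analysis is confined to the spanning check) and, for free, rationality of $\lambda'$ since it is a vertex of a rational polyhedron; what it costs is reliance on standard LP facts (existence of vertices in line-free polyhedra and the active-constraint characterization) in place of the paper's more hands-on facet description. Two small housekeeping points, neither a gap: $0\notin A$ and $A\ne\emptyset$ (both immediate from the definition of an unstable state together with Assumption \ref{ass:nonempty} via Lemma \ref{lem:maxdim}), so $D$ is a genuine nonempty polyhedron; and condition (2) as stated refers to the minimum over $\Xi_{V,\lambda'>0}$ rather than over $A$, so strictly speaking it is only established once you have proved (1) --- which you do, so the order of your last two paragraphs should just be read accordingly.
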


\begin{proof}
By Lemma \ref{lem:maxdim}, we know that $\dim \Conv(\Xi_{V, \lambda > 0}) \ge d-1$.

First of all, suppose that $\dim \Conv(\Xi_{V, \lambda > 0}) = d$. Since it is a convex polytope, it is an intersection of finitely many half-spaces:
\[
	\Conv(\Xi_{V, \lambda > 0}) = \bigcap_{k \in K}H_{\lambda_{k} \ge c_{k}}
\]
where $H_{\lambda_{k} \ge c_{k}} := \{\chi \in M_{\RR}\;|\; \langle \lambda_{k}, \chi \rangle \ge c_{k}\}$ and $K$ is a finite index set. Furthermore, by eliminating redundant half-spaces, we may assume that for all $k\in K$, $H_{\lambda_{k} \ge c_{k}} \cap \Conv(\Xi_{V, \lambda > 0})$ is a ($(d-1)$-dimensional) facet of $\Conv(\Xi_{V, \lambda > 0})$, so $H_{\lambda_{k} \ge c_{k}}$ contains at least $d$ linearly independent characters in $\Xi_{V, \lambda > 0}$. Since $\Xi_{V, \lambda > 0}$ is unstable, the trivial character $\chi_{0}$ is not in $\Conv(\Xi_{V, \lambda > 0})$. Thus there is $k \in K$ such that $\chi_{0} \notin H_{\lambda_{k} \ge c_{k}}$. Note that this implies $c_k > 0$.

We claim that we may take $\lambda' = \lambda_{k}$. Clearly
$\Xi_{V, \lambda > 0} \subset \Xi_{V, \lambda_{k} \ge c_{k}} \subset \Xi_{V, \lambda_{k} > 0}$. By the maximality of $\Xi_{V,\lambda > 0}$, 
\[
	\Xi_{V, \lambda_{k} > 0} = \Xi_{V, \lambda_{k} \ge c_{k}} = \Xi_{V, \lambda > 0}. 
\]
From the first equality, we obtain the minimum value of $\langle \lambda', \chi \rangle = \langle \lambda_k, \chi \rangle$ is $c_{k}$. We checked that $H_{\lambda_{k} = c_{k}}$ has $d$ linearly independent characters of $\Xi_{V, \lambda' > 0}$.

Now suppose that $\dim \Conv(\Xi_{V, \lambda > 0}) = d-1$, so $\Conv(\Xi_{V, \lambda > 0})$ is `thin'. Let $A$ be the unique hyperplane (not passing through the origin by Lemma \ref{lem:maxdim}) containing $\Xi_{V, \lambda > 0}$. Take $\ell \in M_{\RR}^{*}$ such that $\ell|_{A} = c$ for some $c > 0$. Find $\lambda' \in N_{\RR}$ such that $\ell(\chi) = \langle \lambda', \chi\rangle$. Then $\lambda'$ is what we want. Moreover, since $\dim \Conv(\Xi_{V, \lambda' > 0}) = d - 1$, it has at least $d$ linearly independent characters $\chi_{1}, \chi_{2}, \ldots, \chi_{d}$ which correspond to vertices of $\Conv(\Xi_{V, \lambda' > 0})$ and $\lambda'(\chi_{i}) \equiv c$.
\end{proof}

Proposition \ref{prop:minimumacheiveddtimes} suggests the following outline for an algorithm to describe all maximal unstable states.

\begin{enumerate}
\item Let $\cC$ be the set of all $d$ linearly independent subsets of characters $\{\chi_{1}, \chi_{2}, \ldots, \chi_{d}\} \subset \Xi_{V}$. 
\item For each subset $I \in \cC$, compute a one-parameter subgroup $\lambda \in N_{\RR}$ such that $\langle \lambda, \chi_{i}\rangle = \langle \lambda, \chi_{j}\rangle > 0$ for all $\chi_{i}, \chi_{j} \in I$. It is unique up to a positive scalar multiple. Let $\Lambda$ be the set of such $\lambda$'s which is in our previous choice of fundamental chamber $F$. 
\item For each $\lambda \in \Lambda$, compute $\Xi_{V, \lambda > 0}$. Let $\cS$ be the set of such $\Xi_{V, \lambda > 0}$'s. 
\item Let $\cS_{m} \subset \cS$ be the set of maximal elements with respect to the inclusion order. Then $P_{ss}^{F} \subset \cS_{m}$. 
\end{enumerate}

As in the case of the stable locus, it may be possible that $P_{ss}^{F}$ is a proper subset of $\cS_{m}$. We may calculate $P_{ss}^{F}$ from $\cS_{m}$ by using Lemma \ref{lem:refiningmaximalset} with an obvious modification:

\begin{lemma}\label{lem:refiningmaximalsetsemistable}
Let $\Xi_{V, \lambda > 0} \in \cS_m$. $\Xi_{V, \lambda > 0} \in P_{ss}^{F}$ if and only if there is no $g \in W'$ and $\Xi_{V, \mu > 0} \in \cS_{m}$ such that $\Xi_{V, g\lambda > 0} \subsetneq \Xi_{V, \mu > 0}$.
\end{lemma}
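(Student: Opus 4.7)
My plan is to mirror the proof of Lemma \ref{lem:refiningmaximalset} almost verbatim, replacing each occurrence of $\ge 0$ by $> 0$. Since Lemma \ref{lem:refiningmaximalsetsemistable} is precisely the strict-inequality analogue, and since $\cS_m$, $W'$, and the fundamental chamber $F$ play structurally identical roles in the semistable setting, no genuinely new idea should be required.

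For the forward direction, I will assume $\Xi_{V, \lambda > 0} \in P_{ss}^F$, which by definition means it is maximal in the full collection $\{\Xi_{V, \nu > 0}\}_{\nu \in N}$, with no restriction $\nu \in F$. Given any $g \in W'$ and any $\Xi_{V, \mu > 0} \in \cS_m$, a strict inclusion $\Xi_{V, g\lambda > 0} \subsetneq \Xi_{V, \mu > 0}$ would be equivalent, by Weyl-equivariance of states, to $\Xi_{V, \lambda > 0} \subsetneq \Xi_{V, g^{-1}\mu > 0}$, contradicting maximality; here $g^{-1} \in W'$ by Remark \ref{rem:Wprime}, since $W'$ is a union of stabilizer subgroups with the identity removed. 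For the reverse direction, I assume $\Xi_{V, \lambda > 0} \notin P_{ss}^F$. As $\Xi_{V, \lambda > 0} \in \cS_m$ is already maximal among states coming from one-parameter subgroups in $F$, any witness $\mu \in N$ to global non-maximality must satisfy $\mu \notin F$. Following the pattern of Lemma \ref{lem:refiningmaximalset}, I reduce to the case where $\Xi_{V, \mu > 0}$ is maximal and $\mu$ lies in a Weyl chamber $F'$ adjacent to $F$ (i.e.\ $F \cap F' \ne \{0\}$), and then apply the unique $g \in W'$ with $gF' = F$ to obtain $\Xi_{V, g\mu > 0} \in P_{ss}^F \subset \cS_m$ together with the desired strict inclusion $\Xi_{V, g\lambda > 0} \subsetneq \Xi_{V, g\mu > 0}$.

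The only delicate step is the adjacency reduction in the backward direction: one must argue that a globally larger state can always be witnessed by a Weyl chamber sharing a ray with $F$, rather than by some faraway chamber. The standard justification, used tacitly in the proof of Lemma \ref{lem:refiningmaximalset}, writes the Weyl element carrying $F$ to the chamber containing the original $\mu$ as a product of simple reflections (each of which lies in $W'$, since a simple reflection stabilizes every ray of $F$ except one), and then walks through the resulting sequence of adjacent chambers from $F$ to the faraway chamber; the first step at which the state strictly enlarges past $\Xi_{V, \lambda > 0}$ provides a witness lying in a chamber adjacent to $F$. This argument transfers identically from the $\ge 0$ setting to the $> 0$ setting.
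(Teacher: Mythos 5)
Your proposal coincides with the paper's own treatment: the paper proves Lemma \ref{lem:refiningmaximalsetsemistable} simply by invoking the proof of Lemma \ref{lem:refiningmaximalset} ``with an obvious modification,'' i.e.\ running the same two-direction argument with $\ge 0$ replaced by $>0$, which is exactly what you do, including the use of Weyl-equivariance of states and of Remark \ref{rem:Wprime} in the forward direction and the reduction to a maximal state over a chamber meeting $F$ in the backward direction. The adjacency reduction you single out is indeed the only delicate point, and there you are in fact more explicit than the paper, which merely asserts that the witness $\mu$ may be taken in a cone $F'$ with $F \cap F' \ne \{0\}$; just note that your gallery-walk sketch is not airtight as stated (the chamber where the state first strictly enlarges is adjacent to the \emph{preceding} chamber of the gallery, not automatically to $F$), so it fills the tacit step only heuristically---but this does not place you below the level of detail of the paper's own proof.
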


The computational bottleneck of this approach is the computation of the set $\cC$ as before. Here we again calculate a proper subset $\Xi_{V}^{E,ss} \subset \Xi_{V}$ of essential characters. 

\begin{lemma}\label{lem:optimization1}
Let $\gamma_{1}, \gamma_{2}, \ldots, \gamma_{d}$ be the ray generators of $F$. Let $I = \{\chi_{1}, \chi_{2}, \ldots, \chi_{d}\}$ be a linearly independent $d$-subset of characters in $\Xi_{V}$. Suppose that 
\[	
	\chi_{1} \notin \bigcup_{i=1}^{d}\Xi_{V, \gamma_{i} > 0}.
\]
Let $\lambda \in N_{\RR}$ such that $\langle \lambda, \chi_{i} \rangle = \langle \lambda, \chi_{j}\rangle > 0$ for all $\chi_{i}, \chi_{j} \in I$, which is unique up to a nonzero scalar multiple. Then $\lambda \notin F$. 
\end{lemma}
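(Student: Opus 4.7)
The plan is a short argument by contradiction that uses only the simplicial structure of $F$ (Lemma \ref{lemma:semisimple-F}) together with the pairing hypothesis on $\chi_1$.

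First I would unpack the hypothesis $\chi_{1} \notin \bigcup_{i=1}^{d}\Xi_{V, \gamma_{i} > 0}$. By the definition of $\Xi_{V, \gamma_i > 0}$, this is exactly the statement that $\langle \gamma_{i}, \chi_{1}\rangle \le 0$ for every ray generator $\gamma_i$ of $F$.

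Next, assume for contradiction that $\lambda \in F$. By Lemma \ref{lemma:semisimple-F}, $F$ is a simplicial cone, so we can write
\[
    \lambda = \sum_{i=1}^{d} c_{i}\gamma_{i}, \qquad c_{i}\ge 0.
\]
Pairing with $\chi_{1}$ and using bilinearity of $\langle -, -\rangle$ gives
\[
    \langle \lambda, \chi_{1}\rangle \;=\; \sum_{i=1}^{d} c_{i}\,\langle \gamma_{i}, \chi_{1}\rangle \;\le\; 0,
\]
since each $c_i \ge 0$ and each $\langle \gamma_i, \chi_1\rangle \le 0$. This contradicts the assumption that $\langle \lambda, \chi_{1}\rangle > 0$, so $\lambda \notin F$.

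There is essentially no obstacle here; the only subtlety is making sure one really is allowed to expand $\lambda$ with nonnegative coefficients in the $\gamma_i$, which is precisely the content of Lemma \ref{lemma:semisimple-F} (the fact that $F$ is a simplicial cone generated by the $\gamma_i$). The role of the linear independence assumption on $I$ is only to ensure that $\lambda$ exists and is unique up to scalar, and that role is invoked in the statement rather than in the proof. Consequently, the same reasoning yields the slightly stronger conclusion that $\lambda$ does not lie in the closed cone $F$ at all, which is what is needed to discard $\chi_1$ from the list of essential characters in Algorithm \ref{alg:Semistable}.
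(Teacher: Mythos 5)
Your proof is correct and follows essentially the same route as the paper: translate the hypothesis into $\langle \gamma_i, \chi_1\rangle \le 0$ for all $i$, expand $\lambda \in F$ as a nonnegative combination of the ray generators via Lemma \ref{lemma:semisimple-F}, and derive $\langle \lambda, \chi_1\rangle \le 0$, contradicting $\langle \lambda, \chi_1\rangle > 0$.
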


\begin{proof}
By the assumption on $\chi_{1}$, $\langle \gamma_{j}, \chi_{1}\rangle \le 0$ for all $j$. If $\lambda \in F$, then $\lambda$ is a nonnegative linear combination of $\gamma_{1}, \gamma_{2}, \ldots, \gamma_{d}$. Thus $\langle \lambda, \chi_{1}\rangle \le 0$, which contradicts one of the assumptions. Therefore $\lambda \notin F$. 
\end{proof}

Lemma \ref{lem:optimization1} tells us that to construct a linearly independent $d$-subset of characters that define $\lambda$ to form a $\Xi_{V, \lambda > 0}$ with $\lambda \in F$, it is sufficient to take the characters on 
\[
	\bigcup_{i=1}^{d}\Xi_{V, \gamma_{i} > 0}.
\]

Note that $\Xi_{V, \lambda > 0} = \Xi_{V, \lambda \ge c}$ for some $c > 0$. By perturbing $\lambda$ slightly, we may assume that the supporting affine hyperplane $\Xi_{V, \lambda = 0}$ has $d$-linearly independent characters. The $d$-subset of characters that we will choose lie on the supporting hyperplane of $\Conv(\Xi_{V, \lambda = c})$. Then we expect that if $\chi \in \Xi_{V}$ lies on $\Xi_{V, \lambda > c} = \Xi_{V, \lambda \ge c} \setminus \Xi_{V, \lambda = c}$ for every $\lambda \in F$, then we do not need to use $\chi$ to construct $d$-subsets. 

Let $K := \bigcap_{i=1}^{d}\Xi_{V, \gamma_{i} > 0}$. Define a partial order $>$ on $K$ as $\chi > \chi'$ if and only if $\langle \gamma_{i}, \chi\rangle > \langle \gamma_{i}, \chi' \rangle$ for all $i$. 

\begin{lemma}\label{lem:optimization2}
Let $K_{nm} \subset K = \bigcap_{i=1}^{d}\Xi_{V, \gamma_{i} > 0}$ be the set of non-minimal elements of $K$ with respect to $>$. Let $\chi \in K_{nm}$. Then for every maximal unstable state $\Xi_{V, \lambda > 0} = \Xi_{V, \lambda \ge c}$ with $\lambda \in F \setminus \{0\}$, $\chi \in \Xi_{V, \lambda > c}$. 
\end{lemma}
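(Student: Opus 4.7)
The plan is to chase the inequalities encoded in the partial order $>$ on $K$ through the pairing with $\lambda$, using the fact that $F$ is a simplicial cone so $\lambda$ decomposes nonnegatively against its ray generators.

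First, by Lemma \ref{lemma:semisimple-F}, any $\lambda \in F$ can be written uniquely as $\lambda = \sum_{i=1}^{d} a_i \gamma_i$ with $a_i \ge 0$, and since $\lambda \ne 0$ at least one coefficient, say $a_{i_0}$, is strictly positive. Since $\chi \in K_{nm}$, by the definition of non-minimality there exists $\chi' \in K$ with $\chi > \chi'$, meaning $\langle \gamma_i, \chi \rangle > \langle \gamma_i, \chi' \rangle$ for every $i \in \{1, \dots, d\}$.

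Next I would combine these two observations. Expanding the pairing against $\lambda$,
\[
    \langle \lambda, \chi \rangle - \langle \lambda, \chi' \rangle = \sum_{i=1}^d a_i \bigl(\langle \gamma_i, \chi\rangle - \langle \gamma_i, \chi'\rangle\bigr),
\]
each summand is nonnegative, and the $i_0$-th summand is strictly positive because $a_{i_0} > 0$ and $\langle \gamma_{i_0}, \chi\rangle - \langle \gamma_{i_0}, \chi'\rangle > 0$. Hence $\langle \lambda, \chi\rangle > \langle \lambda, \chi'\rangle$.

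Finally, because $\chi' \in K = \bigcap_{i=1}^{d} \Xi_{V, \gamma_i > 0}$, we have $\langle \gamma_i, \chi'\rangle > 0$ for every $i$, so $\langle \lambda, \chi'\rangle = \sum_i a_i \langle \gamma_i, \chi' \rangle \ge a_{i_0}\langle \gamma_{i_0}, \chi' \rangle > 0$. This places $\chi'$ in $\Xi_{V, \lambda > 0} = \Xi_{V, \lambda \ge c}$, and so $\langle \lambda, \chi'\rangle \ge c$. Combining with the strict inequality above gives $\langle \lambda, \chi\rangle > c$, i.e.\ $\chi \in \Xi_{V, \lambda > c}$, which is the claim. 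The only subtle point — and the one that would require flagging carefully in the write-up — is extracting the strictness from $\lambda \ne 0$ through the simpliciality of $F$; without Lemma \ref{lemma:semisimple-F}, the existence of a strictly positive coefficient $a_{i_0}$ in the ray-generator expansion would not be automatic.
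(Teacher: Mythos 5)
Your proof is correct and follows essentially the same argument as the paper: decompose $\lambda$ as a nonnegative combination of the ray generators $\gamma_i$ via Lemma \ref{lemma:semisimple-F}, use non-minimality of $\chi$ to get $\langle \lambda, \chi\rangle > \langle \lambda, \chi'\rangle > 0$, and then upgrade $\langle \lambda, \chi'\rangle \ge c$ to $\langle \lambda, \chi\rangle > c$. Your explicit flagging of the strictly positive coefficient $a_{i_0}$ is the same point the paper handles with the word ``nontrivial'' in its linear combination.
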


\begin{proof}
Being $\chi \in K_{nm}$ means that there is $\chi' \in K$ such that $\langle \gamma_{i}, \chi \rangle > \langle \gamma_{i}, \chi'\rangle$ for all $i$. Since $\lambda \in F \setminus \{0\}$, $\lambda$ can be written uniquely as a nontrivial  linear combination $\sum a_{i}\gamma_{i}$ with $a_{i} \ge 0$. Then
\[
	\langle \lambda, \chi \rangle = \sum a_{i}\langle \gamma_{i}, \chi\rangle > \sum a_{i}\langle \gamma_{i}, \chi'\rangle = \langle \lambda, \chi' \rangle > 0.
\]
Thus $\chi' \in \Xi_{V, \lambda > 0}$. Since $\Xi_{V, \lambda > 0} = \Xi_{V, \lambda \ge c}$, $\langle \lambda, \chi'\rangle \ge c$ and $\chi \in \Xi_{V, \lambda > c}$. 
\end{proof}

Therefore to construct the set of $d$-subsets of characters, it is sufficient to consider the set 
\begin{equation}\label{eqn:essentialsetforss}
	\Xi_V^{E, ss} := \bigcup_{i=1}^{d}\Xi_{V, \gamma_{i} > 0} \setminus K_{nm}.
\end{equation}

\begin{remark}
On the other hand, we cannot eliminate one of two proportional characters, as we can for the stable locus computation in Section \ref{ssec:stablilityalgorithm}. This is because the semi-stable locus computation is based on supporting \emph{affine} spaces, not hyperplanes passing through the origin. 
\end{remark}

Based on these observations, below is the optimized  algorithm. 

\begin{algorithm}\label{alg:Semistable}
[Algorithm for the computation of $P_{ss}^{F}$]\leavevmode\\
\textbf{Input:} The state $\Xi_{V}$.\\
\textbf{Output:} The set $P_{ss}^{F}$ of maximal unstable states.\\
\begin{enumerate}[label=\arabic*.]
\item $B_{0} := \Xi_{V}$.
\item $B_{1} := \bigcup_{i=1}^{d}\Xi_{V, \gamma_{i} > 0}$
\item $K := \bigcap_{i=1}^{d}\Xi_{V, \gamma_{i} > 0}$
\item $J := K$
\item \textbf{for all} $\chi \in J$ \textbf{do}
\item \qquad \texttt{is\_minimal} := \texttt{true}
\item \qquad $J_{> \chi} := \bigcap_{i=1}^{d}\{\chi' \in J\;|\; \langle \gamma_{i}, \chi'\rangle > \langle \gamma_{i}, \chi \rangle\}$.
\item \qquad \textbf{if} $J_{> \chi} \ne \emptyset$ \textbf{then do}
\item \qquad \qquad $J := J \setminus J_{> \chi}$
\item \qquad \qquad \texttt{is\_minimal} := \texttt{false}
\item \qquad \qquad \textbf{break}
\item \textbf{if} \texttt{is\_minimal} = \texttt{false} \textbf{go to} Step 5.
\item $K_{nm} = K \setminus J$
\item $B_{2} := B_{1} \setminus K_{nm}$
\item $\cS_{m} = \emptyset$
\item \textbf{for all} $I \in \binom{B_{2}}{d}$ \textbf{do}
\item \qquad \textbf{if} $I$ is linearly independent \textbf{then do}
\item \qquad \qquad Calculate $\lambda \ne 0$ such that $\langle \lambda, \chi_{i}\rangle = \langle \lambda, \chi_{j}\rangle > 0$ for all $\chi_{i}, \chi_{j} \in I$.
\item \qquad \qquad \textbf{if} $\lambda \in F$ \textbf{then do}
\item \qquad \qquad \qquad Compute $\Xi_{V, \lambda > 0}$.
\item \qquad \qquad \qquad \texttt{is\_maximal} := \texttt{true}
\item \qquad \qquad \qquad \textbf{for all} $\Xi_{V, \mu > 0} \in \cS_{m}$ \textbf{do}
\item \qquad \qquad \qquad \qquad \textbf{if} $\Xi_{V, \lambda > 0} \subset \Xi_{V, \mu > 0}$ \textbf{then} \texttt{is\_maximal} := \texttt{false} and \textbf{break}
\item \qquad \qquad \qquad \qquad \textbf{if} $\Xi_{V, \lambda > 0} \supset \Xi_{V, \mu > 0}$ \textbf{then} $\cS_{m} := \cS_{m} \setminus \{\Xi_{V, \mu > 0}\}$. 
\item \qquad \qquad \qquad \textbf{if} \texttt{is\_maximal} = \texttt{true} \textbf{then} $\cS_{m} := \cS_{m} \cup \{\Xi_{V, \lambda > 0}\}$.
\item $P_{ss}^{F} := \emptyset$
\item \textbf{for all} $\Xi_{V, \lambda > 0} \in \cS_{m}$ \textbf{do}
\item \qquad \texttt{is\_maximal} := \texttt{true}
\item \qquad \textbf{for all} $g \in W'$ \textbf{do}
\item \qquad \qquad \textbf{for all} $\Xi_{V, \mu > 0} \in \cS_{m}$ \textbf{do}
\item \qquad \qquad \qquad \textbf{if} $\Xi_{V, g\lambda > 0} \subsetneq \Xi_{V, \mu > 0}$ \textbf{then} \texttt{is\_maximal} := \texttt{false} and \textbf{break}
\item \qquad \textbf{if} \texttt{is\_maximal} = \texttt{true} \textbf{then} $P_{ss}^{F} := P_{ss}^{F} \cup \{\Xi_{V, \lambda > 0}\}$
\item \textbf{return} $P_{ss}^{F}$
\end{enumerate}
\end{algorithm}

\begin{remark}\label{rem:reductivesemistable}
Suppose that $G$ is reductive but not semisimple, or Assumption \ref{ass:nonempty} is not satisfied. One may not expect a similar algorithm in general, because the state polytope $\Xi_V$ can be contained in an affine space of  large codimension, so there is no easy way to describe a maximal unstable state with $d$-set of linearly independent characters. This problem can be resolved if we replace $M_\RR$ by the smallest linear subspace of $M_\RR$ that contains $\Conv(\Xi_V)$. 

Even after that, since we do not have any Weyl group symmetry, we need to consider all $d$-sets of characters. So if we set $B_2 = B_1 = \Xi_V$ and set $F = N_\RR$, the algorithm gives the correct output. 
\end{remark}

\subsection{GIT boundary}\label{ssec:GITboundary}

When one studies the geometry of moduli spaces constructed by GIT, it is essential to study the geometry of the strictly polystable locus.
It enables us to apply Kirwan's partial desingularization procedure \cite{Kir85} to obtain a moduli space with better singularities, or to apply the  wall-crossing analysis as the linearization varies \cite{DH98, Tha96}. In this section, we describe the $G$-polystable locus and an algorithm to find $T$-polystable loci, where $T$ is a maximal torus of $G$.

Set theoretically, the image of the strictly semistable locus in the quotient, namely the points in $(X\git_L G) \setminus (X^s/G)$, is not in a bijection with the set of $G$-orbits of strictly semistable points, but that of polystable points. Recall that a strictly polystable point is a strictly semistable point with a positive dimensional stabilizer group and with a closed orbit in the semistable locus. 

The following lemma shows how $T$-polystability and $G$-polystability are related.

\begin{lemma}\label{lem:torusreduction}
Let $G$ be a reductive group acting linearly on $(X, L)$, and let $T$ be a maximal torus of $G$. Let $x \in X^{ss}$ be a strictly $G$-polystable point. Then there is $g \in G$ such that $gx$ is strictly $T$-polystable.
\end{lemma}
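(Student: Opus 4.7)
The plan is a two-stage reduction: first conjugate $x$ to place a maximal torus of its stabilizer inside $T$; then, inside the resulting (affine) $G$-orbit, specialize along a one-parameter subgroup of $T$ to a closed $T$-orbit.

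In more detail, by Matsushima's theorem $G_x$ is reductive, and strict polystability forces $\dim G_x > 0$, so $G_x$ contains a non-trivial maximal torus $S$. Extend $S$ to a maximal torus $T' \supset S$ of $G$, write $T' = hTh^{-1}$ for some $h \in G$, and set $x_1 = h^{-1}x$; then $S_0 := h^{-1}Sh$ lies in $T \cap G_{x_1}$, so already $\dim T_{x_1} > 0$. Applying Matsushima once more, $Gx_1 \cong G/G_{x_1}$ is an affine variety on which the reductive group $T$ acts, so there is a unique closed $T$-orbit $O$ inside $\overline{Tx_1}^{Gx_1}$. Pick $y_0 \in O$, write $y_0 = g'x_1$, and set $g = g'h^{-1}$; the claim is that $gx = y_0$ is strictly $T$-polystable.

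The heart of the argument is upgrading the closedness of $Ty_0$ from the orbit $Gx_1$ to the full semistable locus $X^{ss}(T)$. Suppose $Ty_0$ is not closed in $X^{ss}(T)$; by the standard characterization of polystability, there exists a 1-PS $\lambda$ of $T$ with $\mu(y_0, \lambda) = 0$ and $z' := \lim_{t \to 0} \lambda(t) y_0 \notin Ty_0$. Since $\lambda$ is also a 1-PS of $G$ and $y_0$ is $G$-polystable, the analogous characterization applied to $G$ forces $z' \in Gy_0 = Gx_1$. But the orbit curve $\{\lambda(t) y_0 : t \in \Bbbk^{*}\}$ lies inside $Ty_0 \subset Gx_1$, so $z' \in \overline{Ty_0}^{Gx_1} = Ty_0$ by the choice of $y_0$, contradiction. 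Hence $Ty_0$ is closed in $X^{ss}(T)$, i.e. $y_0$ is $T$-polystable.

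For strict polystability we need $\dim T_{y_0} > 0$. If $O = Tx_1$, then $y_0 \in Tx_1$ and so $T_{y_0} = T_{x_1} \supset S_0$ (using abelianness of $T$ to transport stabilizers along the orbit), which is positive-dimensional by construction. Otherwise, the Hilbert--Mumford criterion applied to the $T$-action on the affine variety $Gx_1$ supplies a non-trivial 1-PS $\nu$ of $T$ with $\lim_{t \to 0} \nu(t) x_1 \in O$; this limit is $\nu$-fixed, so the image of $\nu$ sits inside $T_{y_0}$ (again by $T$-abelianness), giving $\dim T_{y_0} > 0$. In either case $y_0$ is strictly $T$-polystable, proving the lemma. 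The main technical obstacle is the closedness upgrade in $X^{ss}(T)$, which is resolved by combining the polystability characterizations for $G$ and $T$ through the same 1-PS $\lambda \in N(T) \subset N(G)$.
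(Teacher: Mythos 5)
Your argument is correct, but its second half follows a genuinely different route from the paper's. The paper stops essentially after your first reduction: having conjugated a positive-dimensional torus of the stabilizer into $T$, it invokes the one-parameter-subgroup criterion for closed orbits and asserts $\lim_{t\to 0}\lambda(t)gx\in Ggx\cap\overline{Tgx}=Tgx$, i.e.\ it takes for granted that the $T$-orbit is already closed inside the closed $G$-orbit. You instead add a second normalization: using that $Gx_1\cong G/G_{x_1}$ is affine (Matsushima), you replace $x_1$ by a point $y_0$ on the closed $T$-orbit in $\overline{Tx_1}$, and only then upgrade closedness to $X^{ss}(T)$ by playing the $T$- and $G$-Hilbert--Mumford characterizations of polystability against each other through the same $\lambda$; positive dimensionality of the $T$-stabilizer of $y_0$ then comes either from $S_0$ or from the one-parameter subgroup reaching $O$. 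This extra step is not redundant caution: placing merely \emph{some} positive-dimensional stabilizer torus inside $T$, as in the paper's choice of $T_1$, does not by itself make the $T$-orbit closed --- for $G=\SL_2\times\SL_2$ acting on $\PP(\mathfrak{sl}_2\oplus\mathfrak{sl}_2)$ and $x=(h,\,h+e)$ (with $h$ diagonal, $e$ nilpotent) the orbit $Gx$ is closed and $T\cap G_x$ is positive dimensional for the diagonal maximal torus $T$, yet $(h,h)\in\overline{Tx}\setminus Tx$ --- so your version makes explicit the normalization inside the orbit that the paper's displayed equality silently requires. The one line you should add: to conclude $z'\in Gy_0$ you need $z'\in X^{ss}(G)$, which follows because $\mu(y_0,\lambda)=0$ and any $G$-invariant section nonvanishing at $y_0$ is $\lambda$-invariant, hence nonvanishing at the limit $z'$. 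What each approach buys: the paper's is shorter and dovetails with its later use of the sets $\Xi_{V,\lambda=0}$, while yours pays with the affine-orbit machinery (unique closed orbit in an orbit closure, reachability of that orbit by a one-parameter subgroup) but delivers a complete justification of the closedness claim.
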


\begin{proof}
Recall that $x \in X$ is strictly $G$-polystable if (1) $x \in X^{ss} \setminus X^s$, (2) $x$ has a positive dimensional stabilizer group, and (3) its orbit $Gx$ is closed in $X^{ss}$. By Theorem \ref{thm:HilbertMumford2}, $x$ is semistable with respect to all maximal tori. The connected component of the stabilizer of $x$ is reductive \cite[Lemma 2.5]{Kir85}. Since it is positive dimensional, it includes a positive dimensional torus $T_1$, and hence there is a maximal torus $T_2 \supset T_1$. Since all maximal tori are conjugate to each other, there is $g \in G$ such that $gT_2 g^{-1} = T$. Then $gx$ is $gT_2 g^{-1}$-semistable and we set $T := gT_2 g^{-1}$. Because $gT_1 g^{-1}$ stabilizes $gx$, $gx$ has a positive dimensional stabilizer. 

Now $Ggx = Gx$ has a closed orbit in $X^{ss}$ if and only if for any one-parameter subgroup $\lambda \subset G$, $\lim_{t \to 0}\lambda(t)x \in Gx$ if the limit exists in $X^{ss}$. In particular, for any one-parameter subgroup $\lambda$ in $T$ such that $\lim_{t \to 0}\lambda(t)gx$ exists, $\lim_{t \to 0}\lambda(t)gx \in Ggx \cap \overline{Tgx} = Tgx$. Thus, $Tgx$ is closed in $X^{ss}$. In summary, $gx$ is strictly $T$-polystable. 
\end{proof}

\begin{remark}\label{rmk:Tpolystabledoesnotimplypolystable}
The converse of Lemma \ref{lem:torusreduction} is not true. Namely, even if $x$ is $T$-polystable for a fixed maximal torus $T \subset G$, it may be possible that $x$ does not have a closed $G$-orbit in $X^{ss}$. See the example in Section \ref{sec:cubic}. 
\end{remark}

Even though $T$-polystability does not provide a complete description of $G$-polystability, it can be regarded as the first step toward the polystability computation.

$T$-polystability has the following combinatorial criterion.

\begin{lemma}\label{lem:Tpolystable}
A point $x \in X^{ss} \setminus X^s$ is $T$-polystable if and only if
\begin{enumerate}
    \item the state $\Xi_x$ is in a positive codimensional linear subspace in $M_\RR$ and;
    \item the trivial character $\chi_0$ is in the relative interior of $\Conv(\Xi_x)$.
\end{enumerate}
\end{lemma}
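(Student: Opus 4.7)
The plan is to translate each of the three conditions in the definition of strict $T$-polystability—$T$-semistability, positive-dimensional stabilizer, and closedness of $T\cdot x$ in $X^{ss}$—into a combinatorial statement about $\Xi_x$, and match them against (1) and (2). Fix a lift $v = \sum_{\chi \in \Xi_x} v_\chi$ of $x$ with $v_\chi \in V_\chi \setminus \{0\}$ for each $\chi \in \Xi_x$. Then for any one-parameter subgroup $\lambda \in N$ one has $\lambda(t)\cdot v = \sum_\chi t^{\langle\lambda,\chi\rangle} v_\chi$, and all the combinatorial translations below flow from this formula.

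For $T$-semistability, Theorem \ref{thm:HilbertMumfordII} already gives $x \in X^{ss}$ iff $\chi_0 \in \Conv(\Xi_x)$, which is subsumed by (2). For the stabilizer, $\lambda$ fixes $[v]$ iff $\langle\lambda,\chi\rangle$ is constant on $\Xi_x$, so the Lie algebra of $\mathrm{Stab}_T(x)$ is the annihilator in $N_\RR$ of the $\RR$-span of the differences $\{\chi-\chi' : \chi,\chi' \in \Xi_x\}$, i.e., the linear subspace parallel to the affine hull of $\Xi_x$. Since $\chi_0 \in \Conv(\Xi_x)$ lies in that affine hull, the affine hull passes through the origin and hence coincides with the linear span of $\Xi_x$. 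Therefore $\dim \mathrm{Stab}_T(x) > 0$ is equivalent to the linear span of $\Xi_x$ being a proper subspace of $M_\RR$, i.e., to condition (1).

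For the closed-orbit condition, the orbit closure $\overline{T\cdot x}$ in $\PP V^*$ is a projective toric variety whose $T$-orbits are in bijection with the faces of $\Conv(\Xi_x)$: for a nonzero $\lambda$, the limit $\lim_{t\to 0}\lambda(t)\cdot [v]$ exists and equals $\bigl[\sum_{\chi \in F_\lambda \cap \Xi_x} v_\chi\bigr]$, where $F_\lambda$ is the face of $\Conv(\Xi_x)$ on which $\langle\lambda,\cdot\rangle$ attains its minimum. Because every vertex of $\Conv(\Xi_x)$ lies in $\Xi_x$, we have $\Conv(F_\lambda \cap \Xi_x) = F_\lambda$, so by Theorem \ref{thm:HilbertMumfordII} such a limit is $T$-semistable iff $\chi_0 \in F_\lambda$. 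Since $X$ is $T$-invariant and closed in $\PP V^*$ we have $\overline{T\cdot x}\subseteq X$, so closedness of $T\cdot x$ in $X^{ss}$ amounts to saying no proper face of $\Conv(\Xi_x)$ contains $\chi_0$; combined with $\chi_0 \in \Conv(\Xi_x)$, this is precisely the relative-interior condition in (2).

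Combining the three translations yields both directions of the equivalence. The most delicate step will be the closed-orbit analysis: I need to invoke the Hilbert-Mumford lemma for torus orbit closures (one-parameter subgroup limits suffice to detect closedness) together with the standard toric description of $\overline{T\cdot x}$, and to certify that the boundary stratum indexed by a proper face $F$ has state exactly $F\cap \Xi_x$ rather than a proper subset of it. It is the condition $v_\chi \ne 0$ for every $\chi\in\Xi_x$ in the chosen lift that guarantees this state computation comes out on the nose, and hence that only $\chi_0$ avoiding all proper faces can prevent a semistable limit from escaping $T \cdot x$.
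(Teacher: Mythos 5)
Your proposal is correct and follows essentially the same route as the paper: the positive-dimensional stabilizer is translated into the span of $\Xi_x$ being a proper (and, by semistability, linear) subspace of $M_\RR$, and closedness of the $T$-orbit in $X^{ss}$ is detected by one-parameter-subgroup degenerations onto faces of $\Conv(\Xi_x)$, i.e. by $\chi_0$ avoiding every proper face. The only difference is presentational: where the paper argues one direction with a supporting hyperplane and declares the converse ``similar,'' you spell out both directions explicitly via the face--orbit correspondence for the toric closure $\overline{T\cdot x}$.
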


\begin{proof}
Suppose that $x \in X^{ss}$ is $T$-polystable. Then the identity component of its stabilizer for the $T$-action is a positive dimensional subtorus $U$ of $T$. Thus, every one parameter subgroup in $\Hom(\Bbbk^*, U)$ acts with the same weight, hence $\Xi_x$ must lie on an affine translation of $\Hom(\Bbbk^*, U)^\perp \subsetneq M_\RR$. But since $\Xi_x$ is semistable, it must include $\chi_0$, so it is lying on $A := \Hom(\Bbbk^*, U)^\perp$, proving the first claim.

If $\chi_0$ is on a relative interior of a proper face $Q$ of $\Conv(\Xi_x)$, we may take a supporting hyperplane $\lambda^\perp \subset M_\RR$ such that $Q \subset \lambda^\perp \cap A \ne A$ and $\Xi_x \subset \{ \chi \in M_\RR\;|\; \langle \chi, \lambda \rangle \ge 0\}$. Then with respect to $\lambda \in N_\RR$, $\lim_{t \to 0}\lambda(t)x \in \overline{Tx} \setminus  Tx$
and $x$ is not polystable. If $\chi_0$ is on the outside of $\Conv(\Xi_x)$, then $x$ is $T$-unstable by Theorem \ref{thm:HilbertMumfordII}. Therefore, $\chi_0$ is in the relative interior of $\Conv(\Xi_x)$. The converse is similar.
\end{proof}

The $T$-polystable locus has a stratification. In the following discussion, (semi-)stability is for the $T$-action. 

\begin{definition}\label{def:Tpolystabilitystratification}
Let $A \subset M_\RR$ be a proper linear subspace. Let $Y_A \subset X^{ss}$ be the subset of strictly $T$-polystable points $x$ such that $\Conv(\Xi_x)$ spans $A$ and $\chi_0 \in \intr \Conv(\Xi_x)$. Then for only finitely many $A$, $Y_A$ is nonempty.
Let $\underline{Y}_A$ be the image of $Y_A$ in $X\git T$. Then $\bigsqcup_A \underline{Y}_A$ is a stratification of $(X\git T) \setminus (X^s/T)$. This stratification is called the \emph{$T$-polystable stratification}.
\end{definition}

\begin{remark}\label{rem:flat}
In terms of realizable matroids defined by $\Xi_V$, the stratification is parametrized by the set of non-maximal flats whose convex hull includes the origin in its relative interior.
\end{remark}

Since our eventual interest is the polystable stratification for $X \git_L G$ for a semisimple group $G$-action, we may assume that the state polytope $\Xi_V$ has Weyl group symmetry. Thus, it is sufficient to find the index set
\[
    P_{ps}^F := \{\Xi_A := \Xi_V \cap A\},
\]
where $A$ is a proper subspace of $M_\RR$ such that $\Conv(\Xi_A)$ spans $A$, of the orbits of the Weyl group action. For each index $A$, we may recover a general $T$-polystable point on $Y_A$ by taking $x \in X$ such that $\Xi_x = \Xi_A$.

\begin{question}\label{que:polystable}
Find an algorithm that computes the index set $P_{ps}^F$.%
\end{question}

Let $G$ be a semisimple group and let $x \in X$ be a $T$-polystable point and $\Xi_x = \Xi_A$ for some proper subspace $A \subset M_\RR$. Since a polystable point $x$ is not stable,
its associated state $\Xi_x = \Xi_A$, up to a Weyl group action, must be contained in one of $\Xi_{V, \lambda \ge 0} \in P_s^F$. Moreover, by Lemma \ref{lem:torusreduction}, $\Xi_x \subset \Xi_{V, \lambda = 0}$. 
So we can start from the subset of $\{\Xi_{V, \lambda = 0}\}$ that contains the trivial character $\chi_0$. If $\chi_0 \in \intr \Conv(\Xi_{V, \lambda = 0})$, then $\Xi_{V, \lambda = 0} \in P_{ps}^F$. If $\chi_0$ is on the relative boundary of $\Conv(\Xi_{V, \lambda = 0})$, then by eliminating some characters in $\Xi_{V, \lambda = 0}$, we can find a state that corresponds to a deeper stratum, corresponding to $A' \subsetneq A$. If $\chi_0 \notin \Conv(\Xi_{V, \lambda = 0})$, then $\Xi_{V, \lambda = 0}$ is unstable, so we can discard it and any proper subsets. 

Based on this strategy, we can describe the algorithm for the $T$-polystable stratification.

\begin{algorithm}\label{alg:Polystable}
[Algorithm for the computation of $P_{ps}^{F}$]
\leavevmode\\
\textbf{Input:} The set $P^F_{s}$.\\
\textbf{Output:} The set $P_{ps}^F$.\\
\begin{enumerate}[label=\arabic*.]
\item $\cS_{p} := \emptyset$ %
\item $P_{ps}^F := \emptyset$ %
\item \textbf{for all} $\Xi_{V, \lambda \ge 0} \in P_s^F$ \textbf{do} 
\item \qquad \textbf{if} $\chi_0 \in \intr\Conv(\Xi_{V, \lambda = 0})$ \textbf{then do}
\item \qquad \qquad $P_{ps}^F\ := P_{ps}^F \cup \{\Xi_{V, \lambda = 0}\}$
\item \qquad \textbf{if} $\chi_0 \in \mathrm{Conv}(\Xi_{V, \lambda = 0})$ \textbf{then do}
\item \qquad \qquad $\cS_{p} := \cS_{p} \cup \{\Xi_{V, \lambda = 0}\}$
\item \textbf{for all} $T \in \cS_{p}$ \textbf{do} %
\item \qquad \textbf{for all} $T' \subset T$ \textbf{do}
\item \qquad \qquad \textbf{if} $\dim \Conv(T') < \dim \Conv(T)$ \textbf{do}
\item \qquad \qquad \qquad \textbf{if} $\chi_0 \in \intr \Conv(T')$ \textbf{do}
\item \qquad \qquad \qquad \qquad $P_{ps}^F := P_{ps}^F \cup \{\mathrm{Span} \;T' \cap \Xi_V\}$
\item \textbf{for all} $T \in P_{ps}^F$ \textbf{do} %
\item \qquad \textbf{for all} $g \in W$ \textbf{do}
\item \qquad \qquad \textbf{if} $gT \ne T$ \textbf{and} $gT \in P_{ps}^F$ \textbf{then do}
\item \qquad \qquad \qquad $P_{ps}^F := P_{ps}^F \setminus \{gT\}$
\item \textbf{return} $P_{ps}^F$
\end{enumerate}
\end{algorithm}

We implemented the above algorithm in \texttt{SageMath} \cite{sagemath-code}. 

\begin{remark}\label{rmk:polystablereductive}
The above algorithm can be applied to reductive groups, if we disregard the Weyl group action. More precisely, the input is the set $P_s$ instead of $P^F_s$, and we may skip Lines \texttt{13-16}.
\end{remark}

\begin{remark}
    We implemented these algorithms (for simple groups) in \texttt{SageMath}. The interested reader can find the code with documentation at: \cite{sagemath-code}. In line \texttt{25} in Algorithm \ref{alg:Stable} and in line \texttt{29} in Algorithm \ref{alg:Semistable} (but not in \ref{alg:Polystable}), $W'$ can be replaced by $W$ and the output does not change. However, more unnecessary iterations of the loop will take place and, as the size of $\Xi_V$ grows, this can have a considerable effect in execution time. On the other hand, one needs additional computational time to construct $W'$, so for small problems using $W$ may partially compensate this time. In the current implementation in \cite{sagemath-code} we use $W$ for simplicity of coding.

\end{remark}

\section{Cubic surfaces}\label{sec:cubic}

In this section, we present a classical example (the moduli space of cubic surfaces) to illustrate how the algorithms in Section  \ref{sec:algorithm} work and describe how the outputs can be interpreted in moduli theory. In this case, the GIT stability analysis was first done by Hilbert in \cite{Hil93}. One can find the computation in several modern textbooks, for instance in \cite[Section 7.2.b]{Muk03}. 

Recall that a degree $d$ hypersurface in $\PP^{n+1}$ can be identified with a nonzero section in $\rH^{0}(\PP^{n+1}, \cO_{\PP^{n+1}}(d))$, up to a scalar multiplication. Two hypersurfaces are projectively equivalent if there is a projective automorphism $\mathrm{Aut}(\PP^{n+1}) \cong \PGL_{n+2}$. So the moduli space of $n$-dimensional degree $d$ hypersurfaces is 
\begin{equation}\label{eqn:GITcompactification}
	\PP \rH^{0}(\PP^{n+1}, \cO_{\PP^{n+1}}(d))^* \git \PGL_{n+2} \cong \PP \rH^{0}(\PP^{n+1}, \cO_{\PP^{n+1}}(d))^* \git \SL_{n+2}. 
\end{equation}
The isomorphism is obtained because the scalar matrices in $\SL_{n+2}$ act trivially. 
Because $\SL_{n+2}$ has no torus factor, there is only one linearization of the $\SL_{n+2}$-action and the GIT quotient is uniquely determined \cite[Section 7.2]{Dol03}.

Any smooth $n$-dimensional hypersurface of degree $d > 2$ in $\PP^{n+1}$ is GIT stable \cite[Theorem 10.1]{Dol03}. Thus, the GIT quotient is indeed a compactification of the moduli space of smooth degree $d$ hypersurfaces. 

\begin{definition}
The \emph{GIT compactification} $H_{n,d}$ of the moduli space of $n$-dimensional smooth degree $d$ hypersurfaces is the GIT quotient in \eqref{eqn:GITcompactification}. 
\end{definition}

We now focus on $H_{2,3}$, the moduli space of cubic surfaces. Let $S \cong \Bbbk^{4}$ be the standard $\SL_{4}$-representation. Then $V := \rH^{0}(\PP^{4}, \cO_{\PP^{4}}(3)) \cong \Sym^{3}S \cong \Gamma_{3\omega_{1}}$ is an irreducible $\SL_{4}$-representation whose highest weight is $3\omega_{1}$. For $\SL_{4}$, the rank $d=3$, $N_{\RR} \cong \{(x_{1}, x_{2}, x_{3}, x_{4}) \in \RR^{4}\;|\; \sum x_{i} = 0\}$ and $M_{\RR} \cong \{(y_{1}, y_{2}, y_{3}, y_{4}) \in \RR^{4}\}/(\sum y_{i})$. The pairing $\langle \;, \; \rangle : N_{\RR} \times M_{\RR} \to \RR$ is induced from the standard dot product of $\RR^4$. By using the pairing, we may identify $N_{\RR}$ and $M_\RR$. For $V \cong \Gamma_{3\omega_{1}}$, 
\[
	\Xi_{V} = \{(y_{1}, y_{2}, y_{3}, y_{4}) \in M_{\RR}\;|\; y_{i} \in \ZZ_{\ge 0}, \sum y_{i} = 3\},
\]
which are 20 lattice points in a regular tetrahedron (Figure \ref{fig:stateforcubic}) in $M_\RR$.
The fundamental chamber (technically, in $N_\RR$) is drawn as a grey simplicial cone.

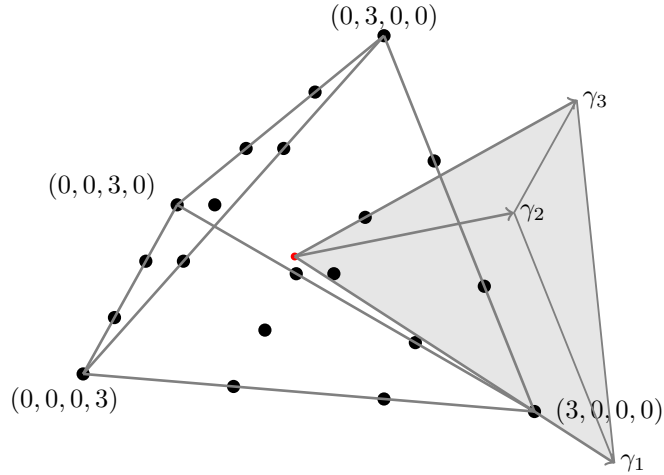
\begin{figure}[!ht]
\def\Px{12}
\def\Py{-1}
\def\Qx{8}
\def\Qy{9}
\def\Rx{2.5}
\def\Ry{4.5}
\begin{tikzpicture}[scale=0.5]
	\fill[fill=gray!20] (\Px/4+\Qx/4+\Rx/4, \Py/4+\Qy/4+\Ry/4) -- (\Px-\Qx/3-\Rx/3+\Px/4+\Qx/4+\Rx/4, \Py-\Qy/3-\Ry/3+\Py/4+\Qy/4+\Ry/4) -- (\Px/3+\Qx/3+\Rx/3+\Px/4+\Qx/4+\Rx/4, \Py/3+\Qy/3+\Ry/3+\Py/4+\Qy/4+\Ry/4);
	\fill (0, 0) circle (5pt);
	\node at (-0.5, -0.7) {$(0,0,0,3)$};
	\fill (\Px/3, \Py/3) circle (5pt);
	\fill (\Px*2/3, \Py*2/3) circle (5pt);
	\fill (\Px, \Py) circle (5pt);
	\node at (\Px+2, \Py) {$(3,0,0,0)$};
	\fill (\Qx/3, \Qy/3) circle (5pt);
	\fill (\Qx*2/3, \Qy*2/3) circle (5 pt);
	\fill (\Qx, \Qy) circle (5pt);
	\node at (\Qx, \Qy+0.5) {$(0,3,0,0)$};
	\fill (\Rx/3, \Ry/3) circle (5pt);
	\fill (\Rx*2/3, \Ry*2/3) circle (5pt);
	\fill (\Rx, \Ry) circle (5pt);
	\node at (\Rx-2,\Ry+0.5) {$(0,0,3,0)$};
	\fill (\Px/3+\Qx/3, \Py/3+\Qy/3) circle (5pt);
	\fill (\Px*2/3+\Qx/3, \Py*2/3+\Qy/3) circle (5pt);
	\fill (\Px/3+\Qx*2/3, \Py/3+\Qy*2/3) circle (5pt);
	\fill (\Px/3+\Rx/3, \Py/3+\Ry/3) circle (5pt);
	\fill (\Px*2/3+\Rx/3, \Py*2/3+\Ry/3) circle (5pt);
	\fill (\Px/3+\Rx*2/3, \Py/3+\Ry*2/3) circle (5pt);
	\fill (\Qx/3+\Rx/3, \Qy/3+\Ry/3) circle (5pt);
	\fill (\Qx*2/3+\Rx/3, \Qy*2/3+\Ry/3) circle (5pt);
	\fill (\Qx/3+\Rx*2/3, \Qy/3+\Ry*2/3) circle (5pt);
	\fill (\Px/3+\Qx/3+\Rx/3, \Py/3+\Qy/3+\Ry/3) circle (5pt);
	\fill[red] (\Px/4+\Qx/4+\Rx/4, \Py/4+\Qy/4+\Ry/4) circle (3pt);
	\draw[line width=1pt, gray] (0,0) -- (\Px,\Py);
	\draw[line width=1pt, gray] (0,0) -- (\Qx,\Qy);
	\draw[line width=1pt, gray] (0,0) -- (\Rx,\Ry);
	\draw[line width=1pt, gray] (\Qx,\Qy) -- (\Px,\Py);
	\draw[line width=1pt, gray] (\Rx,\Ry) -- (\Qx,\Qy);
	\draw[line width=1pt, gray] (\Px,\Py) -- (\Rx,\Ry);
	\draw[->][line width=1pt,gray] (\Px/4+\Qx/4+\Rx/4, \Py/4+\Qy/4+\Ry/4) -- (\Px-\Qx/3-\Rx/3+\Px/4+\Qx/4+\Rx/4, \Py-\Qy/3-\Ry/3+\Py/4+\Qy/4+\Ry/4);
	\node at (\Px-\Qx/3-\Rx/3+\Px/4+\Qx/4+\Rx/4+0.5, \Py-\Qy/3-\Ry/3+\Py/4+\Qy/4+\Ry/4) {$\gamma_{1}$};
	\draw[->][line width=1pt,gray] (\Px/4+\Qx/4+\Rx/4, \Py/4+\Qy/4+\Ry/4) -- (\Px/3+\Qx/3-\Rx/3+\Px/4+\Qx/4+\Rx/4, \Py/3+\Qy/3-\Ry/3+\Py/4+\Qy/4+\Ry/4);
	\node at (\Px/3+\Qx/3-\Rx/3+\Px/4+\Qx/4+\Rx/4+0.5, \Py/3+\Qy/3-\Ry/3+\Py/4+\Qy/4+\Ry/4) {$\gamma_{2}$};
	\draw[->][line width=1pt,gray] (\Px/4+\Qx/4+\Rx/4, \Py/4+\Qy/4+\Ry/4) -- (\Px/3+\Qx/3+\Rx/3+\Px/4+\Qx/4+\Rx/4, \Py/3+\Qy/3+\Ry/3+\Py/4+\Qy/4+\Ry/4);
	\node at (\Px/3+\Qx/3+\Rx/3+\Px/4+\Qx/4+\Rx/4+0.5, \Py/3+\Qy/3+\Ry/3+\Py/4+\Qy/4+\Ry/4) {$\gamma_{3}$};
	\draw[line width=0.7pt,gray] (\Px-\Qx/3-\Rx/3+\Px/4+\Qx/4+\Rx/4, \Py-\Qy/3-\Ry/3+\Py/4+\Qy/4+\Ry/4) -- (\Px/3+\Qx/3-\Rx/3+\Px/4+\Qx/4+\Rx/4, \Py/3+\Qy/3-\Ry/3+\Py/4+\Qy/4+\Ry/4);
	\draw[line width=0.7pt,gray] (\Px-\Qx/3-\Rx/3+\Px/4+\Qx/4+\Rx/4, \Py-\Qy/3-\Ry/3+\Py/4+\Qy/4+\Ry/4) -- (\Px/3+\Qx/3+\Rx/3+\Px/4+\Qx/4+\Rx/4, \Py/3+\Qy/3+\Ry/3+\Py/4+\Qy/4+\Ry/4);
	\draw[line width=0.7pt,gray] (\Px/3+\Qx/3-\Rx/3+\Px/4+\Qx/4+\Rx/4, \Py/3+\Qy/3-\Ry/3+\Py/4+\Qy/4+\Ry/4) -- (\Px/3+\Qx/3+\Rx/3+\Px/4+\Qx/4+\Rx/4, \Py/3+\Qy/3+\Ry/3+\Py/4+\Qy/4+\Ry/4);
\end{tikzpicture}
\caption{State $\Xi_{V}$ and the fundamental chamber $F$}\label{fig:stateforcubic}
\end{figure}

The Weyl group $W$ is isomorphic to $S_{4}$ and its action on all latices/vector spaces is induced by its natural permutation action on the four coordinates of $N_{\RR}$ and $M_{\RR}$. The fundamental chamber in $N_{\RR}$ is 
\[
	F = \{(x_{1}, x_{2}, x_{3}, x_{4}) \in \RR^{4}\;|\; x_{1} \ge x_{2} \ge x_{3} \ge x_{4}, \sum x_{i} = 0\}
\]
and it is a three-dimensional simplicial cone generated by $\gamma_{1} = (3, -1, -1, -1)$, $\gamma_{2} = (1, 1, -1, -1)$, and $\gamma_{3} = (1, 1, 1, -3)$. Figure \ref{fig:essentialpart} shows the set $A_1 := \bigcup_{i=1}^{3}\Xi_{V, \gamma_{i} \ge 0} \setminus \bigcap_{i=1}^{3}\Xi_{V, \gamma_{i} > 0}$ in \eqref{eqn:differenceforstability}. Ten white circles
are excluded and $|A_1| = 10$. Finally, there are two pairs of vertices (each pair contains one of two remaining extremal vertices) which are proportional. Thus, the set $\Xi_{V}^{E,s}$ of essential characters is 
\[
    \Xi_V^{E,s} = \{(2, 0, 0, 1), (1, 1, 0, 1), (1, 0, 2, 0), (1, 0, 1, 1), (1, 0, 0, 2), (0, 2, 1, 0), (0, 2, 0, 1), (0, 1, 2, 0)\}.
\]

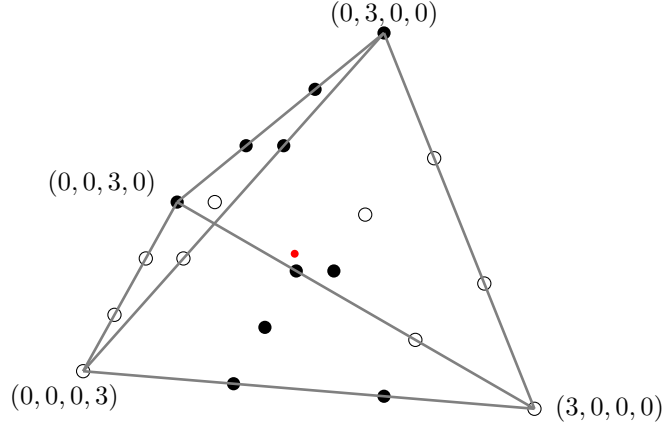
\begin{figure}[!ht]
\def\Px{12}
\def\Py{-1}
\def\Qx{8}
\def\Qy{9}
\def\Rx{2.5}
\def\Ry{4.5}
\begin{tikzpicture}[scale=0.5]
    \draw (0, 0) circle (5pt);
	\node at (-0.5, -0.7) {$(0,0,0,3)$};
	\fill (\Px/3, \Py/3) circle (5pt);
	\fill (\Px*2/3, \Py*2/3) circle (5pt);
	\draw (\Px, \Py) circle (5pt);
	\node at (\Px+2, \Py) {$(3,0,0,0)$};
	\draw (\Qx/3, \Qy/3) circle (5pt);
	\fill (\Qx*2/3, \Qy*2/3) circle (5 pt);
	\fill (\Qx, \Qy) circle (5pt);
	\node at (\Qx, \Qy+0.5) {$(0,3,0,0)$};
	\draw (\Rx/3, \Ry/3) circle (5pt);
	\draw (\Rx*2/3, \Ry*2/3) circle (5pt);
	\fill (\Rx, \Ry) circle (5pt);
	\node at (\Rx-2,\Ry+0.5) {$(0,0,3,0)$};
	\fill (\Px/3+\Qx/3, \Py/3+\Qy/3) circle (5pt);
	\draw (\Px*2/3+\Qx/3, \Py*2/3+\Qy/3) circle (5pt);
	\draw (\Px/3+\Qx*2/3, \Py/3+\Qy*2/3) circle (5pt);
	\fill (\Px/3+\Rx/3, \Py/3+\Ry/3) circle (5pt);
	\draw (\Px*2/3+\Rx/3, \Py*2/3+\Ry/3) circle (5pt);
	\fill (\Px/3+\Rx*2/3, \Py/3+\Ry*2/3) circle (5pt);
	\draw (\Qx/3+\Rx/3, \Qy/3+\Ry/3) circle (5pt);
	\fill (\Qx*2/3+\Rx/3, \Qy*2/3+\Ry/3) circle (5pt);
	\fill (\Qx/3+\Rx*2/3, \Qy/3+\Ry*2/3) circle (5pt);
	\draw (\Px/3+\Qx/3+\Rx/3, \Py/3+\Qy/3+\Ry/3) circle (5pt);
	\fill[red] (\Px/4+\Qx/4+\Rx/4, \Py/4+\Qy/4+\Ry/4) circle (3pt);
	\draw[line width=1pt, gray] (0,0) -- (\Px,\Py);
	\draw[line width=1pt, gray] (0,0) -- (\Qx,\Qy);
	\draw[line width=1pt, gray] (0,0) -- (\Rx,\Ry);
	\draw[line width=1pt, gray] (\Qx,\Qy) -- (\Px,\Py);
	\draw[line width=1pt, gray] (\Rx,\Ry) -- (\Qx,\Qy);
	\draw[line width=1pt, gray] (\Px,\Py) -- (\Rx,\Ry);
\end{tikzpicture}
\caption{The set $\bigcup_{i=1}^{3}\Xi_{V, \gamma_{i} \ge 0} \setminus \bigcap_{i=1}^{3}\Xi_{V, \gamma_{i} > 0}$}
\label{fig:essentialpart}
\end{figure}

For each pair $\{\chi_{1}, \chi_{2}\} \subset \Xi_{V}^{E}$, we compute a (unique up to scalar multiple) one-parameter subgroup $\lambda$ with $\langle \lambda, \chi_{i}\rangle = 0$. If $\lambda \in F$, record $\Xi_{V, \lambda \ge 0}$ and compute maximal elements among them. There are three maximal sets, corresponding to three one-parameter subgroups $\lambda_{1} = (1, 0, 0, -1)$, $\lambda_{2} = (2, 0, -1, -1)$, and $\lambda_{3} = (1, 1, 0, -2)$:
\[
\begin{split}
	\Xi_{V, \lambda_{1} \ge 0} =& \{ (2, 1, 0, 0) , (1, 1, 0, 1) , (2, 0, 0, 1) , 
(0, 1, 2, 0) ,  (1, 0, 2, 0) , (0, 2, 1, 0) , (0, 3, 0, 0) ,\\
& (3, 0, 0, 0) , (1, 0, 1, 1) , (2, 0, 1, 0) ,  (1, 1, 1, 0) , (1, 2, 0, 0) , (0, 0, 3, 0) \},\\
	\Xi_{V, \lambda_{2} \ge 0} =& \{ (2, 1, 0, 0) , (1, 1, 0, 1) , (2, 0, 0, 1) , 
(1, 1, 1, 0) , (0, 3, 0, 0) , (3, 0, 0, 0) , (1, 0, 1, 1) ,\\
& (2, 0, 1, 0) , 
(1, 2, 0, 0) , (1, 0, 2, 0) , (1, 0, 0, 2) \},\\
	\Xi_{V, \lambda_{3} \ge 0} =& \{(2, 1, 0, 0) , (0, 2, 0, 1) , (1, 1, 0, 1) , 
(2, 0, 0, 1) , (0, 1, 2, 0) , (1, 0, 2, 0) , (0, 2, 1, 0) ,\\
& (0, 3, 0, 0) , 
(3, 0, 0, 0) , (2, 0, 1, 0) ,  (1, 1, 1, 0) , (1, 2, 0, 0) , (0, 0, 3, 0)  \}.
\end{split}
\]

Now we turn to a geometric interpretation. In this example, each character $\chi = (y_{0}, y_{1}, y_{2}, y_{3})$ can be identified with a monomial $\prod X_{i}^{y_{i}}$, where $(X_{0}, X_{1}, X_{2}, X_{3})$ is a fixed homogeneous coordinate of $\PP^{3}$. Then for $\lambda_{1} = (1, 0, 0, -1)$, a general polynomial associated to the maximal state $\Xi_{V, \lambda_{1} \ge 0}$ is of the form 
\[
	X_{0}X_{3}f_{1}(X_{0}, X_{1}, X_{2}) + f_{3}(X_{0}, X_{1}, X_{2}),
\]
where $f_{d}$ is a degree $d$ homogeneous polynomial. At the point $P := (0, 0, 0, 1) \in \PP^{3}$, the zero locus has the tangent cone $X_0 f_1 (X_0, X_1, X_2)$, that is a union of two planes. Similarly, for $\Xi_{V, \lambda_{2} \ge 0}$, we have 
\[
	cX_{1}^{3} + X_{0}f_{2}(X_{0}, X_{1}, X_{2}, X_{3}), 
\]
with $c \in \Bbbk$. Then the surface contains a line $X_0 = X_1 = 0$, and a singular point on it, where the tangent cone contains a plane $X_0 = 0$. Finally, for $\Xi_{V, \lambda_{3} \ge 0}$, we obtain
\[
	X_{3}f_{2}(X_{0}, X_{1}) + f_{3}(X_{0}, X_{1}, X_{2}).
\]
At $P$, the surface has the tangent cone $f_2 (X_0, X_1)$, that is a quadric of rank two. Recall that an ordinary double point of a surface is a singular point where the tangent cone is a full rank quadratic cone. The above computation tells us that if a cubic surface is not stable, then it has a singular point which is not an ordinary double point. The reader can verify that our outcome recovers the equations in \cite[p.227]{Muk03}. Indeed, the converse is also true \cite[Theorem 7.14]{Muk03}.

For the semistable locus, we need to compute $\bigcup_{i=1}^{3}\Xi_{V, \gamma_{i} > 0} \setminus K_{nm}$ in \eqref{eqn:essentialsetforss}, where $K_{nm}$ is the set of non-minimal elements in $\bigcap_{i=1}^{3}\Xi_{V, \gamma_{i} > 0}$. It is straightforward to see that $\bigcup_{i=1}^{3}\Xi_{V, \gamma_{i} > 0} = \bigcup_{i=1}^3 \Xi_{V, \gamma_i \ge 0}$. By a direct calculation of the paring $\langle \gamma_{i}, \chi_{j}\rangle$, we can see that $\left|\bigcap_{i=1}^{3}\Xi_{V, \gamma_{i} > 0}\right| = 5$ and there is a unique minimal element $(1, 1, 1, 0)$. Thus, for the semistable locus calculation, 
\[
\begin{split}
    \Xi_V^{E, ss} &= \{(2, 0, 0, 1), (1, 1, 1, 0), (1, 1, 0, 1), (1, 0, 2, 0), (1, 0, 1, 1), (1, 0, 0, 2), (0, 3, 0, 0),\\
    & \quad (0, 2, 1, 0), (0, 2, 0, 1), (0, 1, 2, 0), (0, 0, 3, 0)\}.
\end{split}
\]

By using all triples of characters in $\Xi_V^{E, ss}$, with Algorithm \ref{alg:Semistable}, we obtain three one-parameter subgroups $\mu_{1} = (3, -1, -1, -1)$, $\mu_{2} = (5, 1, 1, -7)$, $\mu_{3} = (3, 3, -1, -5)$ which correspond to maximal unstable states. These are:
\[
\begin{split}
	\Xi_{V, \mu_{1} > 0} =& \{ (2, 1, 0, 0) , (2, 0, 0, 1) , (1, 1, 0, 1) , 
(3, 0, 0, 0) , (1, 2, 0, 0) , (1, 0, 1, 1) ,\\
& (2, 0, 1, 0) , (1, 0, 2, 0) ,
 (1, 1, 1, 0) , (1, 0, 0, 2)\},\\
	\Xi_{V, \mu_{2} > 0} =& \{ (2, 1, 0, 0) , (2, 0, 0, 1) , (1, 0, 2, 0) , 
(1, 1, 1, 0) , (0, 1, 2, 0) , (3, 0, 0, 0) , \\
&(0, 3, 0, 0) , (2, 0, 1, 0) , 
(0, 2, 1, 0) , (1, 2, 0, 0) , (0, 0, 3, 0) \},\\
	\Xi_{V, \mu_{3} > 0} =& \{(2, 1, 0, 0) , (2, 0, 1, 0) , (1, 1, 0, 1) , 
(2, 0, 0, 1) , (1, 0, 2, 0) , (1, 1, 1, 0) , \\
&(0, 1, 2, 0) , (3, 0, 0, 0) ,
 (0, 3, 0, 0) , (0, 2, 0, 1) , (0, 2, 1, 0) , (1, 2, 0, 0)  \}.
\end{split}
\]

In \cite[Prop 7.22]{Muk03}, for the semistability computation, he found the one parameter subgroups
\[
    \nu_1 := (3, -1, -1, -1), \quad \nu_2 := (3, 3, -1, -5), \quad \nu_3 := (3, 1, 1, -5). 
\]
Thus, the list of the one parameter subgroups are different. However, one can check that $\Xi_{\mu_2 > 0} = \Xi_{\nu_3 > 0}$.
By analyzing the equations of unstable cubic surfaces associated to $\Xi_{V, \mu_i > 0}$, one can conclude that a semistable cubic surface may have one extra class of singularities than those appearing for stable surfaces --- a double point whose tangent cone is the union of two planes and the intersection of the planes does not lie on the surface. For the details, consult \cite[Theorem 7.20]{Muk03}.

Finally, by using Algorithm \ref{alg:Polystable}, we can describe the $T$-polystable stratification. There are five strata in total. For each $\lambda_i$, the associated $T$-polystable state is 
\[
    \Xi_{V, \lambda_1 = 0} = \{(1,1,0,1),(1,0,1,1),(0,0,3,0),(0,3,0,0),(0,1,2,0),(0,2,1,0)\},
\]
\[
    \Xi_{V,\lambda_2 = 0} = \{(0,3,0,0),(1,0,1,1),(1,0,2,0),(1,0,0,2)\},
\]
\[
    \Xi_{V, \lambda_3 = 0} = \{(0,2,1,0),(1,1,0,1),(2,0,0,1),(0,0,3,0)\}.
\]
The dimension of each convex hull is two. In $\Xi_{V, \lambda_1 = 0}$, there are two subsets, whose convex hulls are one-dimensional, and contain the origin (which, due to the description of $M_{\mathbb R}$ as a quotient, corresponds to
a scalar multiple of $(1,1,1,1)$): $\Xi_2 := \{(1,1,0,1), (0,0,3,0)\}$ and $\Xi_3 := \{(1,0,1,1), (0,3,0,0)\}$. $\Xi_i$ is also contained in $\Xi_{V, \lambda_i = 0}$ and is the only one in it. On the moduli space $H_{2, 3}$, two strata associated to $\Xi_2$ and $\Xi_3$ are identified by the $\SL_4$-action and their image in $H_{2,3}$ is a point. Indeed, $\Xi_2$ and $\Xi_3$ are in the same orbit of the $W$-action. On the other hand, the larger dimensional strata are not closed with respect to the $\SL_4$-action, hence they are not $\SL_4$-polystable (Remark \ref{rmk:Tpolystabledoesnotimplypolystable}). Therefore, the GIT quotient $H_{2,3}$ is a one-point compactification of the quotient of the stable locus \cite[Theorem 7.20]{Muk03}.

\section{Examples and statistics}\label{sec:examplesandstatistics}

\subsection{Statistics}\label{subsec:stat}

In Table \ref{table:statistics} we present statistics obtained from running Algorithms \ref{alg:Stable}, \ref{alg:Semistable}, and \ref{alg:Polystable}. Most of these statistics were obtained using our \texttt{SageMath} implementation, except for the genus 7 Mukai model, which was computed using \texttt{C++} instead. In the table, we cite a reference for the results that we have found in the literature. (Our citations are not necessarily to the first appearance, especially for the classical GIT problems.) In the subsections following the table, we comment on some of the results that are, to our knowledge, new.

For each example in the table, we give the following data.
\begin{itemize}
\item a short description of the GIT problem;
\item the root system and representation. Here $V(\lambda)$ denotes the irreducible representation with highest weight $\lambda$, and $\omega_i$ are the fundamental dominant weights for this root system; 
\item the run times for Algorithms \ref{alg:Stable}, \ref{alg:Semistable}, and \ref{alg:Polystable} in seconds (unless otherwise indicated); 
\item the size of the set $\Xi_V$, which serves as a measure of the complexity of the input;  
\item the size of the set $A_3$ computed in Algorithm \ref{alg:Stable};
\item the size of the set $B_2$ computed in Algorithm \ref{alg:Semistable};
\item the sizes of the output sets $P_s^F$, $P_{ss}^F$, and  $P_{ps}^F$
\end{itemize}

Our current code implements Algorithms \ref{alg:Stable}, \ref{alg:Semistable}, and \ref{alg:Polystable} faithfully except in one aspect: in Algorithm \ref{alg:Stable} line 25 and Algorithm \ref{alg:Semistable} line 29, the code uses the full Weyl group $W$ instead of the subset $W'$. This should give identical output. This saved us programming time at the cost of additional computing time. 

Unless otherwise indicated, the examples were run on 
an Amazon Web Services \texttt{c4.2xlarge} instance to allow for comparison of the running times. Each such instance had 8 vCPUs, 2.9GHz processors and 15GB memory, and 24GB storage.

In Figure \ref{fig:statistics} we plotted selected data from Table \ref{table:statistics}. Specifically, we plotted the run time for Algorithm \ref{alg:Stable} and the size of its output $|P_s^F|$ for four series of examples: hypersurfaces in $\mathbb{P}^2$, $\mathbb{P}^3$, and $\mathbb{P}^4$, and cubic hypersurfaces of dimensions 1--5. 

\small
\begin{center}
\begin{table}[!ht]
\caption{Statistics for Algorithms \ref{alg:Stable}, \ref{alg:Semistable}, and \ref{alg:Polystable} via our  \texttt{SageMath} implementation.}  \label{table:statistics} 
\begin{tabular}{l|ll|rrr|rrr|rrr}
Description & Type & Rep. & & Time & (sec) & $|\Xi_V|$ & $|A_3|$ & $|B_2|$ & $|P_s^F|$ & $|P_{ss}^F|$ & $|P_{ps}^F|$\\ \hline
Algorithm & &  & {\ref{alg:Stable}} & {\ref{alg:Semistable}} &{\ref{alg:Polystable}} &  &  &  &  &  & \\ \hline

  Plane curves &&&&&&&&\\
degree 2& A2 & $\Gamma_{2 \omega_1}$ & 0.114 &  0.024 &  0.019 &  6 &  1 &  4 &  1 &  2 &  1\\
{\color{white}degree} 3 \cite{HM98} & A2 & $\Gamma_{3 \omega_1}$ & 0.025 &  0.018 &  0.028 &  10 &  3 &  5 &  2 &  1 &  2\\
{\color{white}degree} 4 \cite{AA23} & A2 & $\Gamma_{4 \omega_1}$ & 0.027 &  0.06 &  0.009 &  15 &  3 &  8 &  2 &  2 &  1\\
{\color{white}degree} 5 \cite[p. 80]{MFK94} & A2 & $\Gamma_{5 \omega_1}$ & 0.065 &  0.136 &  0.01 &  21 &  5 &  11 &  3 &  3 &  1\\
{\color{white}degree} 6 \cite{Shah-sextics}& A2 & $\Gamma_{6\omega_1}$  & 0.076 &  0.145 &  0.058 &  28 &  5 &  13 &  3 &  2 &  3\\
{\color{white}degree} 7 & A2 & $\Gamma_{7 \omega_1}$ & 0.155 &  0.352 &  0.016 &  36 &  9 &  17 &  4 &  4 &  1\\
{\color{white}degree} 8 & A2 & $\Gamma_{8 \omega_1}$ & 0.18 &  0.559 &  0.031 &  45 &  9 &  21 &  4 &  4 &  2\\
{\color{white}degree} 9& A2 & $\Gamma_{9 \omega_1}$ & 0.311 &  0.661 &  0.177 &  55 &  9 &  24 &  5 &  4 &  4\\
{\color{white}degree} 10 & A2 & $\Gamma_{10 \omega_1}$ & 0.52 &  1.242 &  0.059 &  66 &  13 &  29 &  6 &  6 &  3\\
{\color{white}degree} 11 & A2 & $\Gamma_{11 \omega_1}$ & 0.798 &  1.836 &  0.052 &  78 &  19 &  34 &  7 &  7 &  2\\
{\color{white}degree} 12 & A2 & $\Gamma_{12 \omega_1}$ & 0.882 &  2.116 &  0.616 &  91 &  13 &  38 &  7 &  6 &  5\\
{\color{white}degree} 13 & A2 & $\Gamma_{13 \omega_1}$ & 1.681 &  3.658 &  0.094 &  105 &  25 &  44 &  9 &  9 &  3\\
{\color{white}degree} 14 & A2 & $\Gamma_{14 \omega_1}$ & 2.25 &  5.032 &  0.17 &  120 &  25 &  50 &  10 &  10 &  5\\
{\color{white}degree} 15 & A2 & $\Gamma_{15 \omega_1}$ & 3.087 &  5.974 &  2.205 &  136 &  21 &  55 &  11 &  10 &  7\\ \hline
  Surfaces &&&&&&&&\\
  Quadrics & A3 & $\Gamma_{2 \omega_1}$ &  0.191 &  0.128 &  0.078 &  10 &  4 &  7 &  2 &  2 &  3\\
  Cubics \cite[\S 7.2.b]{Muk03}& A3 & $\Gamma_{3 \omega_1}$ &  0.277 &  0.886 &  0.122 &  20 &  8 &  15 &  3 &  3 &  3\\
 Quartics \cite{Shah81}& A3 & $\Gamma_{4 \omega_1}$ & 1.181 &  2.377 &  2.196 &  35 &  17 &  21 &  5 &  3 &  7\\
  Quintics \cite{Gal19}& A3 & $\Gamma_{5 \omega_1}$ & 5.647 &  19.259 &  2.808 &  56 &  26 &  37 &  10 &  11 &  4\\
  Sextics & A3 & $\Gamma_{5 \omega_1}$ & 16.736 &  69.257 &  99.363 &  84 &  29 &  54 &  15 &  18 &  13\\ \hline
  Threefolds&&&&&&&&\\ 
Quadrics & A4 & $\Gamma_{2 \omega_1}$ & 0.795 &  1.813 &  0.438 &  15 &  9 &  12 &  2 &  3 &  3\\
Cubics \cites{All03,Yok02}& A4 & $\Gamma_{3 \omega_1}$ & 8.999 &  38.812 &  7.657 &  35 &  21 &  28 &  6 &  6 &  4\\
Quartics & A4 & $\Gamma_{4 \omega_1}$ & 101.999 &  814.157 &  4222.452 &  70 &  39 &  56 &  16 &  23 &  15\\
Quintics \cite{lakhani2010git}& A4 & $\Gamma_{5 \omega_1}$ & 907.658 & 5769.867 & $\text{{---}}^{a}$ &	126	& 76& 84 &	38	&57	& ---\\ \hline
More Cubics &&&&&&&&&\\
4folds \cites{Laz09,Yok08} & A5 & $\Gamma_{3\omega_1}$ &  178.57 &  2020.521 &  24235.369 &  56 &  34 &  44 &  8 &  10 &  14\\
5folds \cite{Shi14}& A6 & $\Gamma_{3\omega_1}$ & 17052.308 & $\text{{---}}^{b}$ & --- & 84 & 60  & 72 & 23 & ---& ---\\ \hline
  Pencils of quadrics &&&&&&&&\\
in $\PP^2$ \cite{Papazachariou-thesis} & A2 & $\Wedge^2 \Gamma_{2\omega_1}$ & 0.025 &  0.048 &  0.007 &  12 &  3 &  7 &  2 &  2 &  1\\
{\color{white} in} $\PP^3$ \cite{Papazachariou-thesis} & A3 & $\Wedge^2 \Gamma_{2\omega_1}$  &1.076 &  1.524 &  1.955 &  31 &  15 &  18 &  5 &  3 &  7\\
{\color{white} in} $\PP^4$ \cite{AM99}& A4 & $\Wedge^2 \Gamma_{2\omega_1}$ & 92.536 &  598.388 &  570.602 &  65 &  39 &  52 &  16 &  22 &  12\\
{\color{white} in} $\PP^5$ & A5 & $\Wedge^2 \Gamma_{2\omega_1}$ & 12424.738 & $\text{{---}}^{b}$& ---& 120 & 73 & 98 & 57 &  ---& ---\\ \hline
  Nets of quadrics &&&&&&&&\\
in $\PP^2$ & A2 & $\Wedge^3 \Gamma_{2 \omega_1}$ &  0.162 & 0.05 & 0.126 & 13 & 3 & 7 & 3 & 2 & 3 \\
{\color{white} in} $\PP^3$ & A3 & $\Wedge^3 \Gamma_{2 \omega_1}$ &  6.981 & 24.104 & 52.355 & 56 & 17 & 37 & 11 & 14 & 10 \\
{\color{white} in} $\PP^4$ \cite{FS13} & A4 & $\Wedge^3 \Gamma_{2 \omega_1}$ & 25993.535 & $57726.588^{c}$ & ---  & 165 & 93 & 124 & 196  & 268 & ---   \\ \hline
  Pencils of cubics &&&&&&&&\\
in $\PP^2$ \cite{Mir80} & A2 & $\Wedge^2 \Gamma_{3 \omega_1}$ & 0.181 & 0.124 & 0.126 & 25 & 5 & 12 & 3 &2& 3\\ \hline
$\Gamma_{\omega_3}$ & A4 & $\Gamma_{\omega_3}$ & 0.482 &  0.539 &  0.108 &  10 &  6 &  8 &  2 &  2 &  1\\
& B4 & $\Gamma_{\omega_3}$ & 60.775 &  131.591 & $\text{{---}}^{a}$ &  65 &  22 &  36 &  8 &  7 & --- \\
& C4 & $\Gamma_{\omega_3}$ & 23.353 &  46.532 &  40.812 &  40 &  13 &  24 &  6 &  7 &  13\\
& D4 & $\Gamma_{\omega_3}$ &  0.295 &  0.524 &  0.481 &  8 &  3 &  5 &  1 &  2 &  3\\
  \hline
  
  Byun-Lee &&&&&&&&\\
$d=3$ \cite{BL15}& B2 & $\Gamma_{3\omega_1}$ &  0.159 &  0.096 &  0.097 &  25 &  3 &  10 &  3 &  2 &  4\\
$d=4$ & B2 & $\Gamma_{4\omega_1}$ & 0.195 &  0.262 &  0.298 &  41 &  4 &  15 &  4 &  3 &  5\\
$d=5$ & B2 & $\Gamma_{5\omega_1}$ & 0.584 &  0.771 &  1.046 &  61 &  6 &  22 &  6 &  5 &  7\\
$d=6$ & B2 & $\Gamma_{6\omega_1}$ & 1.055 &  1.575 &  4.113 &  85 &  7 &  29 &  7 &  6 &  8\\
$d=7$ & B2 & $\Gamma_{7\omega_1}$ & 2.716 &  3.553 &  16.417 &  113 &  10 &  38 &  10 &  9 &  11\\
$d=8$ & B2 & $\Gamma_{8\omega_1}$ & 5.017 &  6.486 &  67.564 &  145 &  12 &  47 &  12 &  11 &  13\\ \hline
  Mukai Problems &&&&&&&&\\
Genus 7 & D5 & $\Wedge^7\Gamma_{\omega_4}$ & --- & --- & --- & 1456 & 852 & 1026  & --- & --- & ---\\
Genus 8 & A5 & $\Wedge^8\Gamma_{\omega_2}$ & --- & --- & --- & 1086 &739 & 863 & ---& ---& ---\\
  Genus 9 & C3 & $\Wedge^9\Gamma_{\omega_3}$ & 7079.337 & 13324.478 & $\text{{---}}^{a}$ &  242 &  51 &  120 &  142 & 186 & --- \\ \hline
\end{tabular}\\
\mbox{}\\
{---} \quad Not attempted; \qquad $\text{{---}}^{a}$ \quad Stopped after 48 hours; \qquad $\text{{---}}^{b}$ \quad Out of memory; \qquad ${}^{c}$ \quad Ran on AWS r5 instance 
\end{table}
\end{center}
\normalsize

\begin{center}
\begin{figure}[!ht]
\caption{Run time and output size for Algorithm \ref{alg:Stable} for four series of examples.}  
\label{fig:statistics}
  \begin{tikzpicture}[scale=5.8]
    \begin{scope}[xshift=0,yshift=24]
    \draw (0.5,0.6) node {Curves in $\mathbb{P}^2$ by degree};
    \draw (0,0)--(1,0)--(1,0.5)--(0,0.5)--(0,0);
    \draw (0.5,-0.1) node {\footnotesize Degree};
    \draw (-0.13,0.25) node[rotate=90] {\footnotesize Time (seconds)};
    \draw (1.12,0.25) node[rotate=90] {\footnotesize Number of elements};
\foreach \x in {0,0.125, 0.25, 0.375, 0.5, 0.625, 0.75, 0.875,1}
  \draw (\x,-0.01) -- (\x,0.01);
\draw (0,-0.03) node {\tiny $0$};
\draw (0.125, -0.03) node {\tiny $2$};
\draw (0.25, -0.03) node {\tiny $4$};
\draw (0.375, -0.03) node {\tiny $6$};
\draw (0.5, -0.03) node {\tiny $8$};
\draw (0.625, -0.03) node {\tiny $10$};
\draw (0.75, -0.03) node {\tiny $12$};
\draw (0.875, -0.03) node {\tiny $14$};
\draw (1,-0.03) node {\tiny $16$};
\foreach \y in {0, 0.071428571, 0.142857143, 0.214285714, 0.285714286, 0.357142857, 0.428571429, 0.5}
\draw (-0.01,\y) -- (0.01,\y);
\draw (0,0) node[anchor=east] {\tiny $0$};
\draw (0, 0.071428571) node[anchor=east] {\tiny $0.0$};
\draw (0, 0.142857143) node[anchor=east] {\tiny $0.5$};
\draw (0, 0.214285714) node[anchor=east] {\tiny $1.0$};
\draw (0, 0.285714286) node[anchor=east] {\tiny $1.5$};
\draw (0, 0.357142857) node[anchor=east] {\tiny $2.0$};
\draw (0, 0.428571429) node[anchor=east] {\tiny $2.5$};
\draw (0, 0.5) node[anchor=east] {\tiny $3.0$};
\foreach \y in {0, 0.083333333, 0.166666667, 0.25, 0.333333333, 0.416666667, 0.5}
  \draw (0.99,\y) -- (1.01,\y);
\draw (1, 0) node[anchor=west] {\tiny $0$};
\draw (1, 0.083333333) node[anchor=west] {\tiny $2$};
\draw (1, 0.166666667) node[anchor=west] {\tiny $4$};
\draw (1, 0.25) node[anchor=west] {\tiny $6$};
\draw (1, 0.333333333) node[anchor=west] {\tiny $8$};
\draw (1, 0.416666667) node[anchor=west] {\tiny $10$};
\draw (1, 0.5) node[anchor=west] {\tiny $12$};
\draw (0.125,0.016285714) node {\tiny $\times$};
\draw (0.1875,0.003571429) node {\tiny $\times$};
\draw (0.25,0.003857143) node {\tiny $\times$};
\draw (0.3125,0.009285714) node {\tiny $\times$};
\draw (0.375,0.010857143) node {\tiny $\times$};
\draw (0.4375,0.022142857) node {\tiny $\times$};
\draw (0.5,0.025714286) node {\tiny $\times$};
\draw (0.5625,0.044428571) node {\tiny $\times$};
\draw (0.625,0.074285714) node {\tiny $\times$};
\draw (0.6875,0.114) node {\tiny $\times$};
\draw (0.75,0.126) node {\tiny $\times$};
\draw (0.8125,0.240142857) node {\tiny $\times$};
\draw (0.875,0.321428571) node {\tiny $\times$};
\draw (0.9375,0.441) node {\tiny $\times$};
\draw (0.125,0.041666667) node {\tiny $\bullet$};
\draw (0.1875,0.083333333) node {\tiny $\bullet$};
\draw (0.25,0.083333333) node {\tiny $\bullet$};
\draw (0.3125,0.125) node {\tiny $\bullet$};
\draw (0.375,0.125) node {\tiny $\bullet$};
\draw (0.4375,0.166666667) node {\tiny $\bullet$};
\draw (0.5,0.166666667) node {\tiny $\bullet$};
\draw (0.5625,0.208333333) node {\tiny $\bullet$};
\draw (0.625,0.25) node {\tiny $\bullet$};
\draw (0.6875,0.291666667) node {\tiny $\bullet$};
\draw (0.75,0.291666667) node {\tiny $\bullet$};
\draw (0.8125,0.375) node {\tiny $\bullet$};
\draw (0.875,0.416666667) node {\tiny $\bullet$};
\draw (0.9375,0.458333333) node {\tiny $\bullet$};
\end{scope}

\begin{scope}[xshift=42,yshift=24]
    \draw (0.5,0.6) node {Surfaces in $\mathbb{P}^3$ by degree};
    \draw (0,0)--(1,0)--(1,0.5)--(0,0.5)--(0,0);
    \draw (0.5,-0.1) node {\footnotesize Degree};
    \draw (-0.14,0.25) node[rotate=90] {\footnotesize Time (seconds)};
    \draw (1.12,0.25) node[rotate=90] {\footnotesize Number of elements};
\foreach \x in {0, 0.142857143, 0.285714286, 0.428571429, 0.571428571, 0.714285714, 0.857142857, 1}
  \draw (\x,-0.01) -- (\x,0.01);
\draw (0,-0.03) node {\tiny $0$};
\draw (0.142857143, -0.03) node {\tiny $1$};
\draw (0.285714286, -0.03) node {\tiny $2$};
\draw (0.428571429, -0.03) node {\tiny $3$};
\draw (0.571428571, -0.03) node {\tiny $4$};
\draw (0.71428571, -0.03) node {\tiny $5$};
\draw (0.857142857, -0.03) node {\tiny $6$};
\draw (1,-0.03) node {\tiny $7$};
\foreach \y in {0, 0.055555556, 0.111111111, 0.166666667, 0.222222222, 0.277777778, 0.333333333, 0.388888889, 0.444444444, 0.5}
\draw (-0.01,\y) -- (0.01,\y);
\draw (0,0) node[anchor=east] {\tiny $0$};
\draw (0, 0.055555556) node[anchor=east] {\tiny $2.0$};
\draw (0, 0.111111111) node[anchor=east] {\tiny $4.0$};
\draw (0, 0.166666667) node[anchor=east] {\tiny $6.0$};
\draw (0, 0.222222222) node[anchor=east] {\tiny $8.0$};
\draw (0, 0.27777777) node[anchor=east] {\tiny $10.0$};
\draw (0, 0.333333333) node[anchor=east] {\tiny $12.0$};
\draw (0, 0.388888889) node[anchor=east] {\tiny $14.0$};
\draw (0, 0.444444444) node[anchor=east] {\tiny $16.0$};
\draw (0, 0.5) node[anchor=east] {\tiny $18.0$};
\foreach \y in {0, 0.0625, 0.125, 0.1875, 0.25, 0.3125, 0.375, 0.4375, 0.5}
  \draw (0.99,\y) -- (1.01,\y);
\draw (1, 0) node[anchor=west] {\tiny $0$};
\draw (1, 0.0625) node[anchor=west] {\tiny $2$};
\draw (1, 0.125) node[anchor=west] {\tiny $4$};
\draw (1, 0.1875) node[anchor=west] {\tiny $6$};
\draw (1, 0.25) node[anchor=west] {\tiny $8$};
\draw (1, 0.3125) node[anchor=west] {\tiny $10$};
\draw (1, 0.375) node[anchor=west] {\tiny $12$};
\draw (1, 0.4375) node[anchor=west] {\tiny $14$};
\draw (1, 0.5) node[anchor=west] {\tiny $16$};
\draw (0.285714286,0.005305556) node {\tiny $\times$};
\draw (0.428571429,0.007694444) node {\tiny $\times$};
\draw (0.571428571,0.032805556) node {\tiny $\times$};
\draw (0.714285714,0.156861111) node {\tiny $\times$};
\draw (0.857142857,0.464888889) node {\tiny $\times$};
\draw (0.285714286,0.0625) node {\tiny $\bullet$};
\draw (0.428571429,0.09375) node {\tiny $\bullet$};
\draw (0.571428571,0.15625) node {\tiny $\bullet$};
\draw (0.714285714,0.3125) node {\tiny $\bullet$};
\draw (0.85714285,0.46875) node {\tiny $\bullet$};
  \end{scope}

\begin{scope}[xshift=0,yshift=0]
    \draw (0.5,0.6) node {Threefolds in $\mathbb{P}^4$ by degree};
    \draw (0,0)--(1,0)--(1,0.5)--(0,0.5)--(0,0);
    \draw (0.5,-0.1) node {\footnotesize Degree};
    \draw (-0.14,0.25) node[rotate=90] {\footnotesize Time (seconds)};
    \draw (1.12,0.25) node[rotate=90] {\footnotesize Number of elements};
\foreach \x in {0, 0.166666667, 0.333333333, 0.5, 0.666666667, 0.833333333, 1}
  \draw (\x,-0.01) -- (\x,0.01);
\draw (0,-0.03) node {\tiny $0$};
\draw (0.166666667, -0.03) node {\tiny $1$};
\draw (0.333333333, -0.03) node {\tiny $2$};
\draw (0.5, -0.03) node {\tiny $3$};
\draw (0.666666667, -0.03) node {\tiny $4$};
\draw (0.833333333, -0.03) node {\tiny $5$};
\draw (1,-0.03) node {\tiny $6$};
\foreach \y in {0, 0.05, 0.1, 0.15, 0.2, 0.25, 0.3, 0.35, 0.4, 0.45, 0.5}
\draw (-0.01,\y) -- (0.01,\y);
\draw (0,0) node[anchor=east] {\tiny $0$};
\draw (0, 0.05) node[anchor=east] {\tiny $100$};
\draw (0, 0.1) node[anchor=east] {\tiny $200$};
\draw (0, 0.15) node[anchor=east] {\tiny $300$};
\draw (0, 0.2) node[anchor=east] {\tiny $400$};
\draw (0, 0.25) node[anchor=east] {\tiny $500$};
\draw (0, 0.3) node[anchor=east] {\tiny $600$};
\draw (0, 0.35) node[anchor=east] {\tiny $700$};
\draw (0, 0.4) node[anchor=east] {\tiny $800$};
\draw (0, 0.45) node[anchor=east] {\tiny $900$};
\draw (0, 0.5) node[anchor=east] {\tiny $1000$};
\foreach \y in {0, 0.0625, 0.125, 0.1875, 0.25, 0.3125, 0.375, 0.4375, 0.5}
  \draw (0.99,\y) -- (1.01,\y);
\draw (1, 0) node[anchor=west] {\tiny $0$};
\draw (1, 0.0625) node[anchor=west] {\tiny $5$};
\draw (1, 0.125) node[anchor=west] {\tiny $10$};
\draw (1, 0.1875) node[anchor=west] {\tiny $15$};
\draw (1, 0.25) node[anchor=west] {\tiny $20$};
\draw (1, 0.3125) node[anchor=west] {\tiny $25$};
\draw (1, 0.375) node[anchor=west] {\tiny $30$};
\draw (1, 0.4375) node[anchor=west] {\tiny $35$};
\draw (1, 0.5) node[anchor=west] {\tiny $40$};
\draw (0.333333333,0.0003975) node {\tiny $\times$};
\draw (0.5,0.0044995) node {\tiny $\times$};
\draw (0.666666667,0.0509995) node {\tiny $\times$};
\draw (0.833333333,0.453829) node {\tiny $\times$};
\draw (0.333333333,0.025) node {\tiny $\bullet$};
\draw (0.5,0.075) node {\tiny $\bullet$};
\draw (0.666666667,0.2) node {\tiny $\bullet$};
\draw (0.833333333,0.475) node {\tiny $\bullet$};
  \end{scope}

  \begin{scope}[xshift=42,yshift=0]
    \draw (0.5,0.6) node {Cubic hypersurfaces by dimension};
    \draw (0,0)--(1,0)--(1,0.5)--(0,0.5)--(0,0);
    \draw (0.5,-0.1) node {\footnotesize Dimension};
    \draw (-0.16,0.25) node[rotate=90] {\footnotesize Time (seconds)};
    \draw (1.12,0.25) node[rotate=90] {\footnotesize Number of elements};
\foreach \x in {0, 0.166666667, 0.333333333, 0.5, 0.666666667, 0.833333333, 1}
  \draw (\x,-0.01) -- (\x,0.01);
\draw (0,-0.03) node {\tiny $0$};
\draw (0.166666667, -0.03) node {\tiny $1$};
\draw (0.333333333, -0.03) node {\tiny $2$};
\draw (0.5, -0.03) node {\tiny $3$};
\draw (0.666666667, -0.03) node {\tiny $4$};
\draw (0.833333333, -0.03) node {\tiny $5$};
\draw (1,-0.03) node {\tiny $6$};
\foreach \y in {0, 0.125, 0.25, 0.375, 0.5}
\draw (-0.01,\y) -- (0.01,\y);
\draw (0,0) node[anchor=east] {\tiny $0$};
\draw (0, 0.125) node[anchor=east] {\tiny $5000$};
\draw (0, 0.25) node[anchor=east] {\tiny $10000$};
\draw (0, 0.375) node[anchor=east] {\tiny $15000$};
\draw (0, 0.5) node[anchor=east] {\tiny $20000$};
\foreach \y in {0, 0.1, 0.2, 0.3, 0.4, 0.5}
  \draw (0.99,\y) -- (1.01,\y);
\draw (1, 0) node[anchor=west] {\tiny $0$};
\draw (1, 0.1) node[anchor=west] {\tiny $5$};
\draw (1, 0.2) node[anchor=west] {\tiny $10$};
\draw (1, 0.3) node[anchor=west] {\tiny $15$};
\draw (1, 0.4) node[anchor=west] {\tiny $20$};
\draw (1, 0.5) node[anchor=west] {\tiny $25$};
\draw (0.166666667,0.000000625) node {\tiny $\times$};
\draw (0.333333333,0.000006925) node {\tiny $\times$};
\draw (0.5,0.000224975) node {\tiny $\times$};
\draw (0.666666667,0.00446425) node {\tiny $\times$};
\draw (0.833333333,0.4263077) node {\tiny $\times$};
\draw (0.166666667,0.04) node {\tiny $\bullet$};
\draw (0.333333333,0.06) node {\tiny $\bullet$};
\draw (0.5,0.12) node {\tiny $\bullet$};
\draw (0.666666667,0.16) node {\tiny $\bullet$};
\draw (0.833333333,0.46) node {\tiny $\bullet$};
\end{scope}
\end{tikzpicture}\\
\mbox{} \\
\begin{tabular}{cl}
  $\times$ & Run time for Algorithm \ref{alg:Stable} \\
  $\bullet$ & Size of the output $|P_s^F|$
\end{tabular}
\end{figure}
\end{center}

\subsection{Quintic threefolds}
\label{subsec:quintic-3folds}
Because of its significance in mathematical physics, Calabi-Yau threefolds have been intensively studied in last several decades. A smooth quintic threefold is one of the simplest kind of a Calabi-Yau threefold of Picard number one. The GIT compactification of the moduli space of smooth quintic threefolds is given by 
\[
	H_{3, 5} := \PP \rH^{0}(\PP^{4}, \cO_{\PP^{4}}(5))^* \git \SL_{5}.
\]
The stable locus is described in \cite{lakhani2010git}, but the semistable locus is not given there because of its computational complexity. By using our algorithms, we computed the list of maximal states describing the stable locus and the semistable locus. The number of maximal states for the stable locus is 38, and that for the semistable locus is 57. The running time for the stable locus is less than 15 minutes, and for the semistable locus is less than two hours. The computational output is available at \cite{sagemath-code}. 

\subsection{Cubic fivefolds}
\label{subsec:cubic-5folds}
Over an algebraically closed field $\Bbbk$ of characteristic $\ne 2$, up to isomorphism, there is only one smooth hypersurface of degree $d \le 2$. Thus, each of these moduli spaces is a point. Cubic hypersurfaces are thus the lowest degree cases that have non-trivial moduli, and they have attracted attention from many researchers. The GIT analysis of cubic threefolds is done in \cite{All03}, and for cubic fourfolds it is completed in \cite{Laz09}. The GIT stability of cubic fivefolds was investigated by Shibata in \cite{Shi14} (note that it does not appear to be peer-reviewed, and it does not give a complete geometric characterization of the non-stable locus). However, to the authors' knowledge, the semistable locus has not been published yet. 

By using the algorithms in Section \ref{sec:algorithm}, we recovered the results in \cite{All03, Laz09, Shi14}.%

\subsection{Mukai models}\label{subsec:Mukai}

In a series of papers beginning in 1992, Mukai introduced three projective GIT quotients that are birational models of the Deligne-Mumford compactification $\overline{\rM}_g$ of the moduli space of curves of genus $g$ for $7 \leq g \leq 9$. See the announcement \cite{Mukai1992survey} for an overview and \cites{Mukai1992g8,Mukai1995,Mukai2010} for details. Although nearly 30 years have passed years since these models were introduced, very little is known about their boundaries. We discuss them briefly now. 

\subsubsection{Genus 7} 

In \cite{Mukai1995} Mukai showed that the GIT quotient $\Gr(7,S^{+}) \git \Spin(10)$ is a birational model of $\overline{\rM}_7$. Here $\Spin(10)$ is a double cover of $\SO(10)$, and $S^+ \cong \Gamma_{\omega_4}$ is the 16-dimensional half-spin representation of $\Spin(10)$ with highest weight $\omega_4$. The map to $\overline{\rM}_7$ arises because the intersection of a generic 6-dimensional projective linear space with the orthogonal Grassmannian $\mathrm{OG}(5,10) \subset \PP(S^{+})^*$ is a canonically embedded genus 7 curve. The orthogonal Grassmannian is a homogeneous space for $\Spin(10)$, and moving the linear space by an element of $\Spin(10)$ does not change the isomorphism class of the curve.

This quotient corresponds to the GIT problem for the representation $\bigwedge^7 S^{+}$. We have $\binom{16}{7} = 11,440$. The group $\Spin(10)$ has rank 5. We compute $|A_3| = 852$ for Algorithm \ref{alg:Stable}. Thus $\binom{A_3}{d-1} \approx 21.8 \times 10^{9}$. We deemed this too large to run Algorithm \ref{alg:Stable} using our \texttt{SageMath} software. However, we wrote highly optimized \texttt{C++} code \cite{cpp-code} to compute a superset $\wtS_m$ of the set $\mathcal{S}_m$ using a variation of Algorithm \ref{alg:Stable} as follows.

\begin{algorithm}\label{alg:StableMukai7}
[Algorithm for the computation of $\wtS_m$]
\leavevmode\\
\textbf{Input:} The state $\Xi_{V}$.\\
\textbf{Output:} A set $\wtS_m$ containing all the maximal non-stable states.\\
\begin{enumerate}[label=\arabic*.]
\item[1--9.] Compute $A_3$ as in Algorithm \ref{alg:Stable}.
\item[10.] $\wtS_{m} := \emptyset$
\item[11.] \textbf{for all} $I \in \binom{A_{3} }{ d-1}$ \textbf{do}
\item[12.] \qquad \textbf{if} $I$ is linearly independent \textbf{then do}
\item[13.] \qquad \qquad Calculate $\lambda \ne 0$ such that $\langle \lambda, \chi\rangle = 0$ for all $\chi \in I$
\item[14.] \qquad \qquad \textbf{if} $\lambda \notin F$ \textbf{then} $\lambda := -\lambda$
\item[15.] \qquad \qquad \textbf{if} $\lambda \in F$ \textbf{then do}
\item[16.] \qquad \qquad \qquad $\wtS_{m} := \wtS_{m} \cup \{\Xi_{V, \lambda \ge 0}\}$
\item[17.] \textbf{return} $\wtS_{m}$
\end{enumerate}
\end{algorithm}

We find that $|\wtS_m| = 10,620,905$. Due to the large size of $\wtS_m$, we did not attempt to compute the maximal elements of $\wtS_m$ with respect to inclusion. 

Any attempt at geometrically analyzing the maximal non-stable states also seems doomed, due to the size of $\wtS_m$. We therefore explored other approaches to studying Mukai's model of $\overline{\rM}_7$. In the preprint \cite{Swin23}, the fourth author uses invariant theory to establish the GIT semistability of some singular curves in this GIT problem, including a 7-cuspidal curve, the genus 7 balanced ribbon, and a family of highly reducible nodal curves.

\subsubsection{Genus 8} In \cite{Mukai1992g8} Mukai showed that the GIT quotient $\Gr(8,\bigwedge^2 V) \git \SL_6$ is a birational model of $\overline{\rM}_8$. Here $V \cong \Gamma_{\omega_1}$ is the standard representation of $\SL_6$. The map to $\overline{\rM}_8$ arises because the intersection of a generic 7-dimensional projective linear space with the Grassmannian $\Gr(2,V) \subset \PP(\bigwedge^2 V)^*$ is a canonically embedded genus 8 curve. We compute $|A_3| = 739$ for Algorithm \ref{alg:Stable}. Thus $\binom{A_3}{d-1} \approx 12.3 \times 10^{9}$. Once again, we deemed this too large for our \texttt{SageMath} software. In future work we will apply our \texttt{C++} code instead.

\subsubsection{Genus 9} In \cite{Mukai2010} Mukai showed that the GIT quotient $\Gr(9,\Gamma_{\omega_3}) \git \Sp_6$ is a birational model of $\overline{\rM}_9$. Here $\Gamma_{\omega_3}$ is the irreducible representation of $\Sp_6$ with highest weight $\omega_3$. It has dimension 14. The map to $\overline{\rM}_9$ arises because the intersection of a generic 8-dimensional projective linear space with the symplectic Grassmannian $ \Sp(3,6) \subset \PP(\Gamma_{\omega_3})^*$ is a canonically embedded genus 9 curve. 

In this case, we have $|A_3| = 51$ for Algorithm \ref{alg:Stable}, and $|B_2| = 120$ for Algorithm \ref{alg:Semistable}. We used our \texttt{SageMath} software to compute $|P_{s}^F| = 142$ and $|P_{ss}^F| = 186$. The running times for these calculations were approximately 2 hours and 4 hours, respectively. We did not attempt to run Algorithm \ref{alg:Polystable} for this example.

\section{An application to K-moduli of Fano threefolds}\label{sec:Kstability}

In this section, we assume that $\Bbbk = \CC$. 

We discuss the compactification of the one-dimensional moduli space of Fano threefolds with $-K_X^3 = 32$, $h^{1,2} =1$. This is family 2.25  in the Mori-Mukai classification. The smooth member of this family of Fano varieties is obtained by blowing up $\PP^3$ along a smooth complete intersection of two quadric surfaces, i.e. a smooth elliptic quartic. By \cite[Theorem B]{GLHS18}, the main component 
$\mathrm{Hilb}^{4t}_{\tiny{main}}(\PP^3)$
of the Hilbert scheme  associated to such curves is a double  blow up of the Grassmanian
\[
\mathrm{Hilb}^{4t}_{\tiny{main}}(\PP^3)
\longrightarrow
\Gr \left(2, \rH^0(\PP^3, \cO_{\PP^{3}}(2)) \right) 
\cong 
\mathbb{G}\left(\PP^1, \PP^9\right).
\]
Two elliptic curves are isomorphic if and only if they are equivalent by a projective automorphism of $\PP^3$.
Thus, the action of the projective automorphism of $\PP^3$ lifts to the above Grassmanian and the GIT moduli space of elliptic quartics in $\PP^3$ is equal to 
\begin{equation}
\Gr \left(2, \rH^0( \PP^3, \cO_{\PP^{3}}(2)) \right)  \git  \PGL_{4} 
\cong
\Gr \left(2, \rH^0( \PP^3, \cO_{\PP^{3}}(2)) \right)  \git  \SL_{4} 
\label{eq:family2-25}    
\end{equation}
Next, we describe the GIT (semi)stability analysis in detail.  
By Theorem \ref{thm:functoriality} we can use the  Pl\"ucker embedding 
\[
\Gr \left(2, \rH^0( \PP^3, \cO_{\PP^{3}}(2)) \right) 
\longrightarrow
\PP(\bigwedge^2 \rH^0( \PP^3, \cO_{\PP^{3}}(2)))^*\cong
\PP^{\binom{10}{2}-1} \cong \PP^{44}
\]
to determine the (semi)stable locus. We denote the coordinates of the Pl\"ucker embedding as 
$X_{i} X_{j} \wedge X_{s} X_{r}$ with $i,j,s,r \in \{0,1,2,3 \}$. So, the action of a diagonal one-parameter subgroup $\lambda = (a_0, a_1, a_2, a_3)$
is
\[
\lambda(t) \cdot (X_{i} X_{j} \wedge X_{s} X_{r})
= t^{a_i+a_j+a_s+a_r} X_{i} X_{j} \wedge X_{s} X_{r}.
\]
Let 
\[
f(X_0,X_1,X_2,X_3):= 
\sum_{i \leq k}a_{i,j}X_{i}X_{k}, \qquad
g(X_0,X_1,X_2,X_3):= 
\sum_{i \leq k}b_{i, k}X_{i}X_{k}
\]
be the equations of two quadrics such that $C := \{f =g =0 \}$. The Pl\"ucker coordinates of the curve $C$ are all the $(2\times 2)$-minors of a $(2 \times 10)$ matrix $H$ given by
\[
H =
\begin{pmatrix}
a_{0,0} & a_{0,1} & a_{0,2} & a_{1,1} & a_{1,2} & a_{2,2} &
a_{0,3} & a_{1,3} &  a_{2,3} & a_{3,3} 
\\
b_{0,0} & b_{0,1} & b_{0,2} & b_{1,1} & b_{1,2} & b_{2,2} &
b_{0,3} & b_{1,3} & b_{2,3} & b_{3,3} 
\end{pmatrix}.
\]

\begin{lemma}\label{lemma:SingCurve}
The complete intersection of two quadrics $C:=Q_1 \cap Q_2 $ has a singular point if and only if up to the $\SL_4$-action, the equations of the quadrics can be written as
\begin{eqnarray*}
f_1(X_0,X_1,X_2,X_3) &=& a_0X_3 X_0 + q(X_0,X_1,X_2)
\\
f_2(X_0,X_1,X_2,X_3) &=& X_3(b_0 X_0 + b_1 X_1 +b_2 X_2) + q'(X_0,X_1,X_2), 
\end{eqnarray*}
where either $a_0 =0$ or $b_1=b_2 = 0$.
\end{lemma}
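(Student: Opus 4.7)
The plan is to prove both implications by a direct Jacobian computation at a distinguished point, after placing the singular point at $p := [0:0:0:1]$ via the transitive $\SL_4$-action on $\PP^3$, and then exploiting the fact that the complete intersection $C$ depends only on the pencil $\langle f_1, f_2\rangle$. This last observation means that a $\GL_2$-change of basis within the pencil is permissible in addition to $\SL_4$ acting on the coordinates, which is built into the GIT setup \eqref{eq:family2-25}.

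For the ``if'' direction, I would first check that the quadrics of the stated form have no $X_3^2$ term, so $f_1(p) = f_2(p) = 0$ and $p \in C$. A direct computation of partial derivatives then yields
$$
\mathrm{Jac}_p(f_1, f_2) = \begin{pmatrix} a_0 & 0 & 0 & 0 \\ b_0 & b_1 & b_2 & 0 \end{pmatrix}.
$$
If $a_0 = 0$ the first row vanishes, and if $b_1 = b_2 = 0$ both rows are proportional to $(1, 0, 0, 0)$. In either case the rank drops to at most one, so $p$ is a singular point of $C$.

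For the ``only if'' direction, I would assume $C$ is singular at a point $p$, and move $p$ to $[0:0:0:1]$ using $\SL_4$. The incidence conditions $f_i(p) = 0$ force the $X_3^2$-coefficients to vanish, so
$$
f_i = X_3 \ell_i(X_0, X_1, X_2) + q_i(X_0, X_1, X_2), \qquad i = 1, 2,
$$
with $\ell_i$ linear and $q_i$ ternary quadratic. Evaluating gradients at $p$ gives $\nabla f_i|_p = (\ell_i^{(0)}, \ell_i^{(1)}, \ell_i^{(2)}, 0)$, where $\ell_i^{(j)}$ is the coefficient of $X_j$ in $\ell_i$. The rank-one condition on the Jacobian then translates into $\ell_1$ and $\ell_2$ being proportional as linear forms in $X_0, X_1, X_2$.

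It remains to put the pair $(f_1, f_2)$ into one of the two normal forms. If both $\ell_i$ are nonzero, I would replace $f_2$ by $f_2 - \mu f_1$ for suitable $\mu$ to arrange $\ell_2 = 0$; this is a $\GL_2$-basis change inside the pencil and does not alter $C$. One is thus reduced to $\ell_1 = 0$ or $\ell_2 = 0$. In the first case $f_1 = q_1(X_0, X_1, X_2)$, the lemma's form with $a_0 = 0$. In the second case $\ell_1 \ne 0$, and an $\SL_3$-action on $\langle X_0, X_1, X_2\rangle$ (combined with a compensating scaling of $X_3$ to remain in $\SL_4$) sends $\ell_1$ to a scalar multiple of $X_0$, giving $f_1 = a_0 X_0 X_3 + q$ with $a_0 \ne 0$, while $f_2 = q'(X_0, X_1, X_2)$ forces $b_0 = b_1 = b_2 = 0$, in particular $b_1 = b_2 = 0$. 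No individual step is deep; the main bookkeeping obstacle is to be careful about which group action ($\SL_4$ on coordinates versus the $\GL_2$ on the pencil) is being invoked at each reduction.
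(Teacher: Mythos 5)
Your proof is correct and follows essentially the same route as the paper: place the singular point at $[0:0:0:1]$, observe that $f_i(p)=0$ kills the $X_3^2$ terms so that $f_i = X_3\ell_i + q_i$, and translate singularity at $p$ into the rank condition on the $2\times 4$ Jacobian. The one deviation is your $\GL_2$ basis change inside the pencil to arrange $\ell_2 = 0$: the lemma is phrased ``up to the $\SL_4$-action,'' and the paper avoids any pencil move by first normalizing $\ell_1$ to a multiple of $X_0$ (by a transformation fixing $p$) and then reading off the singularity condition as the vanishing of the minors $a_0b_1 = a_0b_2 = 0$. Your pencil move is harmless in context --- the GIT object is the point of $\Gr\left(2, \rH^0(\PP^3,\cO_{\PP^3}(2))\right)$, i.e.\ the pencil, so $C$ is unchanged --- but it is also unnecessary: once $\ell_1$ is normalized to $a_0X_0$ with $a_0\ne 0$, the proportionality $\ell_2 = c\,\ell_1$ forced by the rank condition already gives $\ell_2 = b_0X_0$, i.e.\ $b_1=b_2=0$, which is exactly the second alternative of the lemma, so the whole argument stays within $\SL_4$.
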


\begin{proof}
Without loss of generality, we may assume that the singular point of $C$ is $p:=[0:0:0:1]$. The condition that $p\in C$ implies that the equations of the quadrics can be written as
\begin{eqnarray*}
f_{1}(X_0,X_1,X_2,X_3) &=& X_3\ell(X_0,X_1,X_2) + q(X_0,X_1,X_2)
\\
f_2(X_0,X_1,X_2,X_3) &=& X_3 \ell'(X_0,X_1,X_2) + q'(X_0,X_1,X_2),
\end{eqnarray*}
where $\ell(X_0,X_1,X_2)$ and $\ell'(X_0,X_1,X_2)$ are linear forms while $q(X_0,X_1,X_2)$ and $q'(X_0,X_1,X_2)$ are quadratic forms. Applying a projective transformation fixing $p$, we can write 
$\ell(X_0,X_1,X_2)$ as $X_0$. Then, we can write the above equations as
\begin{eqnarray*}
f_{1}(X_0,X_1,X_2,X_3) &=& a_0 X_3 X_0 + q(X_0,X_1,X_2)
\\
f_2(X_0,X_1,X_2,X_3) &=& X_3\left( b_0 X_0 + b_1 X_1 + b_2 X_2 \right) + q'(X_0, X_1, X_2).
\end{eqnarray*}
The curve $C$ is singular at $p$ if and only if the rank of the Jacobian matrix evaluated at that point is less than two. In our particular case such matrix is given by 
\begin{eqnarray*}
\begin{pmatrix}
\nabla f_1(p)\\
\nabla f_2(p)
\end{pmatrix}
= 
\begin{pmatrix}
a_0 & 0 & 0 & 0
\\
b_0 & b_1 &  b_2 & 0
\end{pmatrix}.
\end{eqnarray*}
Its rank is less than two if and only if $a_0b_1=a_0b_2=0$.
\end{proof}

Let $V := \bigwedge^2 \rH^0( \PP^3, \cO_{\PP^{3}}(2))$.  Then $V = \Gamma_{3\omega_1+\omega_2}$ is an irreducible $\SL_4$-representation
with the highest weight $\omega = 3\omega_1 + \omega_2$. 

\subsection{Stability analysis}

Algorithm \ref{alg:Stable} gives a set of maximal non-stable sets $P^F_s$ associated to the five one parameter subgroups
\begin{align}\label{eq:ListCurveOPS}
\lambda_1 = (1, 1, 1, -3), 
&& \lambda_2 = (1,0,0,-1), 
&& \lambda_{3} = (3,1,-1,-3), 
\\
\lambda_4 =(3,-1,-1,-1),
&& \lambda_5 =(1,1,-1,-1).
& &
\notag
\end{align}

The following lemma gives a geometric characterization of the nonstable locus. We know of two different proofs of this lemma. One strategy is to do a case-by-case analysis of the outputs of our algorithms. For a different strategy, see \cite{papazachariou2022k}.

\begin{lemma}[cf.~{\cite[Theorem 4.10]{papazachariou2022k}}]
\label{lemma:Papazachariou}
The complete intersection $C$ is not stable if and only if it is singular. 
\end{lemma}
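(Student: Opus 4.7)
My plan is to derive both implications from the output of Algorithm \ref{alg:Stable}: by Theorem \ref{thm:HilbertMumford2} combined with the Weyl-group reduction behind Corollary \ref{cor:unstability}, the complete intersection $C$ fails to be $\SL_4$-stable if and only if some $\SL_4$-translate $gC$ satisfies $\Xi_{gC}\subseteq \Xi_{V,\lambda_i\ge 0}$ for one of the five one-parameter subgroups listed in \eqref{eq:ListCurveOPS}. Since $\Xi_{gC}$ is the support of the Plücker image of $gC$, this containment translates into a rank condition on the matrix $H$ attached to $gC$: namely, the $2\times 2$ minors indexed by pairs of monomials of $\lambda_i$-weight $<0$ must all vanish.

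For the direction ``singular implies non-stable'', I would apply Lemma \ref{lemma:SingCurve} to assume $C$ is given by the two quadrics listed there. A direct inspection shows that in either sub-case ($a_0=0$ or $b_1=b_2=0$), the $X_3^2$ coefficient vanishes in both quadrics and the $2\times 3$ submatrix of $H$ with columns $a_{0,3},a_{1,3},a_{2,3}$ has rank at most one. These are exactly the conditions that make every Plücker coordinate of $\lambda_1$-weight $<0$ vanish, so $\Xi_C\subseteq \Xi_{V,\lambda_1\ge 0}$ and $C$ is $\lambda_1$-non-stable.

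For the direction ``non-stable implies singular'', I would treat the five $\lambda_i$ case by case. In each case the rank-$\le 1$ condition produced by non-stability permits an elementary row operation on the pencil $(f_1,f_2)$, not changing $C$, which produces a member $g=c_1f_1+c_2f_2$ all of whose monomials have nonnegative $\lambda_i$-weight. The unique fixed point of $\lambda_i$ of minimum weight on $\PP^3$ (typically $p=[0{:}0{:}0{:}1]$, up to a Weyl-group conjugation for some of the $\lambda_i$) then satisfies $g(p)=0$ and $\nabla g(p)=0$: every monomial of $g$ vanishes to order $\ge 2$ at $p$ because each such monomial is forced to involve at least two of the variables $X_j$ ($j\ne 3$). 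The other member of the pencil also vanishes at $p$, because the rank condition forces the coefficient of the top-weight monomial ($X_3^2$ in the standard case) to vanish in both rows of $H$. Hence the Jacobian of $(f_1,f_2)$ at $p$ has rank $\le 1$, so $p$ is a singular point of $gC$, and $C$ is singular.

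The main obstacle will be the bookkeeping in the mixed cases $\lambda_3=(3,1,-1,-3)$ and $\lambda_5=(1,1,-1,-1)$, where the set of $\lambda_i$-negative-weight Plücker coordinates splits into several blocks that must be analysed simultaneously: the rank-reduction that produces the singular member $g$ is not a single rank-one condition but a coupled one, and one must verify that the resulting $g$ and $p$ are compatible with the vanishing locus of the other pencil generator. A more uniform proof avoiding this explicit enumeration is given in~\cite{papazachariou2022k}.
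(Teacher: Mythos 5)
Your first direction (singular $\Rightarrow$ non-stable) is fine and is essentially the paper's argument: reduce to the normal form of Lemma \ref{lemma:SingCurve} and check that all Pl\"ucker coordinates of negative $\lambda_1$-weight vanish, so $\mu(C,\lambda_1)\ge 0$. Your converse also starts exactly as in the paper (non-stability means some translate has state contained in one of the five $\Xi_{V,\lambda_i\ge 0}$), but the uniform mechanism you propose for extracting a singular point does not work. The claim that every monomial of nonnegative $\lambda_i$-weight ``involves at least two of the variables $X_j$, $j\ne 3$'' is already false for $\lambda_2=(1,0,0,-1)$ and $\lambda_3=(3,1,-1,-3)$: the monomial $X_0X_3$ has weight $0$, so your distinguished pencil member $g$ need not vanish to order $2$ at $p=[0{:}0{:}0{:}1]$. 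Likewise, non-stability does not force the $X_3^2$-coefficient to vanish in both rows of $H$: for $\lambda_2$ the pair $(X_0^2, X_3^2)$ has weight $0$, so $g=X_0^2$, $h=X_3^2+\dots$ is a legitimate non-stable pencil in which $h(p)\ne 0$ and $p\notin C$.

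More seriously, for $\lambda_4=(3,-1,-1,-1)$ (which you do not flag as problematic) and for $\lambda_5$, a non-stable curve need not be singular at \emph{any} torus-fixed point, so no choice of ``minimal-weight fixed point $p$'' can carry your argument. For instance, $C=\{X_0X_1=0\}\cap\{X_3^2+X_1^2-X_2^2+X_0X_2=0\}$ has all Pl\"ucker coordinates of negative $\lambda_4$-weight equal to zero (every nonzero minor pairs a monomial divisible by $X_0$, of weight $\ge 2$, with one of weight $\ge -2$), yet it is the union of two smooth conics meeting only at $[0{:}0{:}1{:}\pm 1]$; it is smooth at the unique coordinate point it contains. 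The singular points of such curves depend on the coefficients and are generically not fixed by the torus, which is why the paper's proof instead derives from each maximal state an explicit normal form of the two quadrics (e.g.\ for $\lambda_4$, the first quadric is divisible by $X_0$) and then locates a singular point geometrically (intersection of the conic components on a reducible quadric, a curve containing a line, or a non-reduced curve). So the case-by-case analysis cannot be compressed into ``check rank $\le 1$ of the Jacobian at $p$''; your proof as written establishes the converse direction only for $\lambda_1$, and with extra care for $\lambda_2$, but not for $\lambda_3,\lambda_4,\lambda_5$.
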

\begin{proof}
First,  suppose that $C$ is singular. By Lemma \ref{lemma:SingCurve}, up to a change of coordinates, we have 
\[
C= 
\{
a_0 X_3 X_0 + q(X_0, X_1, X_2) =
X_3(b_0 X_0 + b_1 X_1 +b_2 X_2) + q'(X_0, X_1, X_2) = 0 
\} 
\]
with either $a_0=0$ or $b_0=b_1=0$. We examine both cases and show they imply $\mu(C, \lambda_1) \geq 0$. 

Indeed, if $a_0=0$, the Pl\"ucker embedding of $C$ has nonzero coefficients only for the vectors of the form $X_3 X_i \wedge  X_s X_r $ and $X_i X_j \wedge X_s X_r$ with $i,j,s,r \in \{0,1,2\}$ and $i+s+r=3$. Then 
\[
 \lambda_1(t) \cdot X_3 X_i \wedge  X_s X_r  = t^{-3 +i+r+s}X_3 X_i \wedge  X_s X_r = X_3 X_i \wedge  X_s X_r,
 \qquad
 \lambda_1(t) \cdot X_j X_i \wedge  X_s X_r = t^4 X_j X_i \wedge  X_s X_r.
\]
So $\mu(C, \lambda_1) \ge 0$. If $b_1=b_2=0$, then a similar direct calculation shows that the Pl\"ucker coefficients are nonzero only for the same forms to the previous case. So we obtain $\mu(C, \lambda_1) \geq 0$. As a consequence, the curve is not stable by Theorem \ref{thm:HilbertMumford}.

Conversely, suppose that $C$ is not stable. The hypothesis that $C$ is not stable implies that it is projectively equivalent to a curve $C'$ whose state $\Xi_{C'}$ is contained in one of $\Xi_{V, \lambda_i \geq 0}$ with $\lambda_i$ as listed on Equation \eqref{eq:ListCurveOPS}. We present the analysis of one of the five cases,  $\Xi_{C'} \subseteq \Xi_{V, \lambda_4 \geq 0}$, below.

Algorithm \ref{alg:Stable} and its implementation gives the maximal non-stable state::
\[
\begin{split}
\Xi_{V, \lambda_4 \ge 0} =& 
\{(1, 2, 1, 0) ,(2, 0, 0, 2) ,(2, 0, 2, 0) ,(1, 0, 2, 1),(1, 1, 1, 1) ,(1, 1, 2, 0) ,(1, 0, 3, 0),\\
&(3, 0, 0, 1),(1, 3, 0, 0) ,(1, 2, 0, 1),(2, 2, 0, 0),(2, 1, 1, 0) ,(2, 1, 0, 1) ,(1, 0, 0, 3),\\
&(1, 1, 0, 2),(1, 0, 1, 2) ,(3, 1, 0, 0) ,(2, 0, 1, 1),(3, 0, 1, 0)\},
\end{split}
\]

The containment 
$\Xi_{C'} \subseteq \Xi_{V, \lambda_4 \geq 0}$ implies that the curve $C'$ can be written as 
\[
\left\{
d_0 X_3^2 + X_3\left(\sum_{i=0}^2 a_i X_i\right) + q_2(X_0,X_1,X_2) =
c_0 X_3^2 + X_3\left(\sum_{i=0}^2 b_i X_i\right) + q_2'(X_0,X_1,X_2) = 0
\right\}
\]
where $q_2(X_0,X_1,X_2)$ and $q_2'(X_0,X_1,X_2)$ are homogeneous polynomials of degree two. 

The first conclusion is that $d_0c_0=0$. Otherwise, the monomial $X_3^2$ will be present in both quadratic equations with non-zero coefficients. This will imply the existence of the character $(0,0,0,4)$ in $\Xi_{C'}$, but it does not exist in  
$\Xi_{V, \lambda_4 \geq 0}$, contradicting 
$\Xi_{C'} \subset \Xi_{V, \lambda_4 \geq 0}$.

By symmetry, we may assume that $d_0 =0$. If $c_0\ne 0$, then we have nonzero Pl\"ucker coordinates for 
$X_3^2 \wedge \prod_{i=1}^3 X_i^{m_i}$
whose associated character is $(m_0,m_1,m_2, m_3+2)$. The only such character with $m_3=1$ in $\Xi_{V, \lambda_4 \ge 0}$, is $(1, 0, 0, 3)$. Thus, the equations for $C'$ are of the form 
\[
\left\{
a_0 X_3 X_0 + q_2(X_0, X_1, X_2) =
c_0 X_3^2 + X_3\left(\sum_{i=0}^2 b_i X_i\right) + q_2'(X_0, X_1, X_2) 
=0
\right\}.
\]

Further inspection of the characters within $\Xi_{V, \lambda_4 \geq 0}$ and the last coordinate is $2$, which are $(1,1,0,2)$, $(1,0,1,2)$, and $(2,0,0,2)$, we find that the first coordinate must be nonzero. This last fact constrains the possible monomials with nonzero coefficients, and $C'$ is
\[
\left\{
X_0 f_1(X_0, X_1, X_2, X_3)
=
c_0 X_3^2 + X_3\left(\sum_{i=0}^2 b_i X_i\right) + q_2'(X_0,X_1,X_2) = 0
\right\}.
\]
Now it is straightforward to check that $C'$ is singular, as it is on the intersection of a quadric surface and a union of two planes.

The proofs for the other cases are similar.%
\end{proof}

\subsection{Polystability analysis}
Next, we discuss the polystable curve  with maximal stabilizer. In this particular example, after relabeling, we have the equation $\{X_0 X_1 = X_2 X_3 = 0\}$. Note that the associated Pl\"ucker point $X_0 X_1 \wedge X_2 X_3$ is invariant with respect to a maximal torus $T$ because the associated state is $(1, 1, 1, 1)$, which corresponds the trivial character (Recall that for the type $A_n$, $M_\RR$ can be identified with $\RR^{n+1}/(\sum \ee_i = 0)$.). The curve $C$ represents the union of four lines supported on the toric boundary of $\PP^3$. 

Thus, we conclude that a curve $C$ in this one-dimensional family $\Gr(2, \rH^0(\PP^3, \cO_{\PP^2}(2)))\git \SL_4$ is stable if and only it is smooth and it is strictly polystable if and only if it is $C_0 := \{ X_0 X_1 = X_2 X_3 = 0\}$. By blowing-up $\PP^3$ along each such curve, one can construct a one-dimensional compact family of (possibly singular) Fano threefolds over the GIT quotient \eqref{eq:family2-25}, where all smooth elements are K-stable (see \cite{CalabiFanoProject}). The singular curve $C_0$ is toric, so the blow-up $Y_0$ of $\PP^3$ along $C_0$ is a toric variety. One can check that the barycentre of its toric polytope is the origin (e.g. by running a script on Magma), which means that $Y_0$ is K-polystable. Thus, one has that \eqref{eq:family2-25} parametrizes compact family of K-polystable Fano threefolds. Now, using the inverse moduli continuity method in \cite{papazachariou2022k}, it follows that \eqref{eq:family2-25} is isomorphic to the K-moduli component of this family.

\section{Potential improvements and new problems}\label{sec:improvement}

In this section, we mention three known improvements and one conjectural improvement to the algorithms in Section \ref{sec:algorithm}, which may be worth considering for large problems. %
  Finally, we discuss three open problems for future work.

\subsection{A sufficient condition for maximality for nonstable states}

For each state $\Xi_{V, \lambda \ge 0}$ that we computed in Line 16 of Algorithm \ref{alg:Stable}, there is a sufficient condition for maximality. 

\begin{proposition}\label{prop:maximal}
If $\chi_0 \in \intr\Conv(\Xi_{V, \lambda = 0}) \subset \lambda^\perp$, then $\Xi_{V, \lambda \ge 0}$ 
is maximal in $\{\Xi_{V, \mu \ge 0}\}$. 
\end{proposition}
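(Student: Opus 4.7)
The plan is a proof by contradiction. I would assume there exists a nonzero $\mu \in N_{\RR}$ with strict containment $\Xi_{V,\lambda \ge 0} \subsetneq \Xi_{V,\mu \ge 0}$, and work toward a contradiction. The strategy will be first to pin down $\mu$ as a scalar multiple of $\lambda$ using the hypothesis on $\chi_0$, and then eliminate both possible signs.

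First I would observe that the chain $\Xi_{V,\lambda = 0} \subset \Xi_{V,\lambda \ge 0} \subset \Xi_{V,\mu \ge 0}$ shows that $\ell_\mu$ is nonnegative on every character of $\Xi_{V,\lambda = 0}$, hence on the whole convex hull $\Conv(\Xi_{V,\lambda = 0})$ by linearity. Next, since $\ell_\mu(\chi_0) = 0$ and the hypothesis places $\chi_0$ in the interior of $\Conv(\Xi_{V,\lambda = 0})$ relative to $\lambda^\perp$, a small open ball around $\chi_0$ inside $\lambda^\perp$ will sit inside $\Conv(\Xi_{V,\lambda = 0})$; evaluating $\ell_\mu$ on $\chi_0 \pm \epsilon v$ for arbitrary $v \in \lambda^\perp$ then forces $\langle \mu, v\rangle = 0$. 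So $\ell_\mu$ vanishes identically on $\lambda^\perp$. Because the annihilator of $\lambda^\perp$ inside $N_{\RR}$ is precisely the line $\RR\lambda$, this will give $\mu = c\lambda$ for some nonzero scalar $c$.

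The case $c > 0$ is immediate, as $\Xi_{V,\mu \ge 0} = \Xi_{V,\lambda \ge 0}$ contradicts the strict inclusion. The hard part will be the case $c < 0$: here $\Xi_{V,\mu \ge 0} = \Xi_{V,\lambda \le 0}$, and strict containment $\Xi_{V,\lambda \ge 0} \subsetneq \Xi_{V,\lambda \le 0}$ would force $\Xi_{V,\lambda > 0} = \emptyset$, meaning every weight of $V$ satisfies $\langle \lambda, \chi\rangle \le 0$. To rule this out I plan to invoke the standing context that $V$ is a nontrivial representation of a semisimple group $G$, so that Assumption \ref{ass:nonempty} holds (by Lemmas \ref{lem:assumptionsimplecase}--\ref{lem:assumptionproductcase}) and $\chi_0$ sits in the full-dimensional interior of $\Conv(\Xi_V)$. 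But then $\Conv(\Xi_V) \subset \{\chi : \langle \lambda, \chi\rangle \le 0\}$ would put $\chi_0$ on the bounding hyperplane $\lambda^\perp$ rather than in the interior, yielding the desired contradiction.
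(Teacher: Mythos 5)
Your proof is correct and takes essentially the same route as the paper: both arguments hinge on the fact that, because $\chi_0$ lies in the interior of $\Conv(\Xi_{V,\lambda = 0})$ inside $\lambda^\perp$, the functional $\ell_\mu$ of any putative larger state $\Xi_{V,\mu\ge 0}$ cannot restrict to a nonzero nonnegative functional on $\lambda^\perp$ --- you phrase this as forcing $\mu \in \RR\lambda$, while the paper phrases it as a proper half-space of $\lambda^\perp$ necessarily missing some character of $\Xi_{V,\lambda = 0}$. The one point where you go beyond the paper is the explicit elimination of the antiproportional case $\mu = c\lambda$ with $c<0$ (via Assumption \ref{ass:nonempty}), which the paper dismisses with the bare clause that $\lambda$ and $\mu$ are not proportional; this is a useful added detail rather than a different method.
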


\begin{proof}
Suppose not. Then there is $\mu \in N_\RR$ such that $\Xi_{V, \lambda \ge 0} \subsetneq \Xi_{V, \mu \ge 0}$. In particular, $\lambda$ and $\mu$ are not proportional. Then $\Xi_{V, \lambda = 0} \cap \Xi_{V, \mu \ge 0}$ is a half-space in $\Xi_{V, \lambda = 0}$, so by the assumption, it cannot include all characters in $\Xi_{V, \lambda = 0}$. Therefore, there is a $\chi \in \Xi_{V, \lambda \ge 0} \setminus \Xi_{V, \mu \ge 0}$.  
\end{proof}

\subsection{Essential pairs and triples}\label{ssec:essentialpair}

In Line 11 of Algorithm \ref{alg:Stable}, we consider the set of all $(d-1)$-subsets of essential characters. When $d$ is large, this is expensive. One possibility to reduce the size of the set is to extend the notion of essential characters to essential subsets. 

\begin{definition}\label{def:essential}
A finite set of nontrivial characters $S := \{\chi_1, \chi_2, \ldots, \chi_k\}$ is \emph{essential} if $S$ is linearly independent and $\mathrm{Span}(S)^\perp \cap F \ne \{\textbf{0}\}$.
\end{definition}

When $S = \{\chi\}$ is a singleton set, $S$ is essential if and only if $\chi \in \bigcup_{i=1}^d \Xi_{V, \gamma_i \ge 0} \setminus \bigcap_{i=1}
^d \Xi_{V, \gamma_i > 0}$ (Proposition \ref{prop:differenceforstability}). If $T \subset S$ and $S$ is essential, then $T$ is essential. 

A computation of essential pairs is relatively easy. By definition, a pair $\{\chi_1, \chi_2\}$ is essential if and only if $\mathrm{Span}(\chi_1, \chi_2)^\perp \cap F \ne \{\chi_0\}$ and $\chi_1$ and $\chi_2$ are not proportional. Because $F$ is a full-dimensional strongly convex cone, this is equivalent to the condition that for the projection map 
\[
    \phi : N_\RR \stackrel{\left(\begin{array}{c}\chi_1 \\ \chi_2 \end{array}\right)}{\longrightarrow} \RR^2,
\]
$\phi(F) = \RR^2$. It occurs if and only if 
\[
    \textbf{0} \in \intr \Conv(\phi(\gamma_1), \phi(\gamma_2), \ldots, \phi(\gamma_d)).
\]
Since this is a convex geometry computation in two dimensional space, the verification is quick. And we expect that the set of essential pairs is very small compare to $\binom{A_3}{2}$.

Note that in Algorithm \ref{alg:Stable}, to make $\cS_m$, instead of using $\binom{A_3}{d-1}$, it suffices to use the proper subset of essential $(d-1)$-sets. Any $(d-1)$ essential set can be obtained by taking a union of $\lceil \frac{d-1}{2}\rceil$ essential pairs. 

A similar approach is possible for Algorithm \ref{alg:Semistable}. For the semistability, we need to use the following definition. 

\begin{definition}\label{def:essentialforss}
A finite set of nonzero characters $S = \{\chi_1, \chi_2, \ldots, \chi_k\}$ is \emph{essential}
\begin{enumerate}
\item if $S = \{\chi\}$, then $\chi \in \bigcup_{1=1}^d \Xi_{\gamma_i > 0} \setminus K_{nm}$ (Lemma \ref{lem:optimization2});
\item if $|S| > 1$, then the set of vectors in $N_\RR$ that is perpendicular to the affine space generated by $S$, which is a sub vector space of $N_\RR$, intersects $F$ nontrivially. 
\end{enumerate}
\end{definition}

Note that in Line 16 of Algorithm \ref{alg:Semistable}, instead of $\binom{B_2 }{ d}$, we only need to take the set of all essential $d$-subsets. Note also that if $S$ is essential and $T \subset S$, then $T$ is also essential. Furthermore, if $|S| = 3$, then we can obtain the following criterion for essentiality --- $\{\chi_1, \chi_2, \chi_3\}$ is an essential triple if and only if for the projection 
\[
    \phi : N_\RR \stackrel{\left(\begin{array}{c}\chi_1 - \chi_2\\ \chi_1 - \chi_3\end{array}\right)}{\longrightarrow} \RR^2, 
\]
$\phi(F) = \RR^2$, or equivalently, $\textbf{0} \in \intr \Conv(\phi(\gamma_1), \phi(\gamma_2), \ldots, \phi(\gamma_d))$. Now every essential $d$-sets can be obtained by taking a union of $\lceil \frac{d}{3}\rceil$ of essential triples.

\subsection{Parallel computation and existing algorithms to find maximal sets}

Several steps in algorithms \ref{alg:Stable}, \ref{alg:Semistable}, and \ref{alg:Polystable} can be parallelized, allowing the answers to be computed more quickly.  For example, lines 12--16 in Algorithm \ref{alg:Stable} can be executed for each set $I \in \binom{A_3}{d-1}$ in parallel, and  lines 17--20 in Algorithm \ref{alg:Semistable} can be executed for each set $I \in \binom{B_2}{d}$ in parallel.

Finding maximal elements of a given set of states can also be performed in parallel (cf. lines \texttt{18-21,23-28} in Algorithm \ref{alg:Stable} and \texttt{22-25,29-32} in Algorithm \ref{alg:Semistable}); indeed, this is a well-researched problem. Given a collection $\mathcal F$ of subsets $S_1, \ldots, S_m$ over some  common domain (which in our case is almost always $\Xi_V$ or a subset of it), one chooses $N=\sum |S_i|$ to be the problem size and considers finding the maximal elements in $\mathcal F$.  In \cite{Yellin-Jutla}, the authors provided an algorithm requiring $O(N^2/\log N)$ dictionary operations with worst-case running time of $O(N^2/\sqrt{\log N})$. %

Nonetheless, the real bottleneck for mathematicians applying these algorithms will not be in the computation of states or the finding of maximal states, but in the interpretation of the outputs ($P^F_s, P^F_{ss}, P^F_{ps}$) in geometric terms. Indeed, in our experience, such interpretation for \emph{one} state in any of these sets takes significantly longer than executing several times the algorithms that produced them. Thus, until significant improvement takes place in the automatic recognition of singularities and invariants of families of varieties produced by our algorithms, the potential optimizations at implementation level discussed above will mean very little.

\subsection{A conjecture about the Weyl group action}
Algorithms \ref{alg:Stable} and \ref{alg:Semistable} each use the Weyl group symmetry in two different ways. First, the Weyl group symmetry is used to significantly reduce the set of one-parameter subgroups we need to consider. Then in the last stage of each of these algorithms (lines 23-28 of Algorithm \ref{alg:Stable}, or lines 27-32 of Algorithm \ref{alg:Semistable}) we apply the action of $W$ (or a subset $W'$ of $W$) to make the output non-redundant.  These lines are intended to remove nonstable (respectively unstable) states that are not maximal because they are subsets of some nonstable (resp. unstable) maximal state in another Weyl chamber.  

\textit{A priori}, such a containment seems possible to us. However, in all the examples that we have run so far \cite{sagemath-code}, we have not seen such a containment occur; that is, the last optimization step does not make any difference to the output. This leads to the following conjecture.

\begin{conjecture} \label{conj: Wprime optimization}
    At the last step of the algorithms, the optimization routine using $W' \subset W$ does not reduce the output of algorithms \ref{alg:Semistable} and \ref{alg:Stable}.
\end{conjecture}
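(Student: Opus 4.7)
The plan is to establish the conjecture by a direct geometric argument that exploits the Weyl group symmetry of the state $\Xi_V$. I present the argument for Algorithm \ref{alg:Stable}; the case of Algorithm \ref{alg:Semistable} is identical after replacing $\ge 0$ with $>0$. The foundational identity is
$$
\Xi_{V, w\nu \ge 0} \;=\; w\,\Xi_{V, \nu \ge 0}, \qquad w \in W,\ \nu \in N_\RR,
$$
which follows from the $W$-invariance of $\Xi_V$ together with $\langle w\nu, \chi\rangle = \langle \nu, w^{-1}\chi\rangle$. In particular, if $w$ fixes a point $\nu$, then $\Xi_{V, \nu \ge 0}$ is $w$-invariant.

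By Lemma \ref{lem:refiningmaximalset}, it suffices to show that any $\Xi_{V, \lambda \ge 0} =: S$ maximal in $\{\Xi_{V, \nu \ge 0}\}_{\nu \in F}$ is in fact maximal in $\{\Xi_{V, \nu \ge 0}\}_{\nu \in N_\RR}$. Suppose for contradiction that there exists $\nu_1 \in N_\RR \setminus F$ with $\Xi_{V, \nu_1 \ge 0} =: S_1 \supsetneq S$. Since the cell $\mathrm{Rep}(S_1) := \{\nu : \Xi_{V, \nu \ge 0} = S_1\}$ is relatively open in its affine span, I may perturb $\nu_1$ to a generic point of $\mathrm{Rep}(S_1)$; after such a perturbation the segment $\lambda_t := (1-t)\lambda + t\nu_1$ crosses the hyperplanes of the combined arrangement (Weyl walls and the $\chi^\perp$ for $\chi \in \Xi_V \setminus S$) transversely and at distinct times. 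Because $S \subseteq S_1$, the state $\Xi_{V, \lambda_t \ge 0}$ is non-decreasing in $t$. Let $\sigma := \sup\{t : \lambda_t \in F\}$ and let $\tau$ be the first time the state strictly exceeds $S$. Maximality of $S$ in $F$ forces $\sigma < \tau$: otherwise $\lambda_\tau \in F$ would witness a strictly larger state in $F$, a contradiction.

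For $t \in (\sigma, \tau)$ the segment lies outside $F$ but the state remains $S$, and by genericity it traverses a chain of adjacent Weyl chambers $F = F^{(0)}, F^{(1)}, \ldots, F^{(k)} = F_1$, where $F_1$ is the chamber containing $\lambda_t$ just before $\tau$. At the exit point $\lambda_\sigma$ and at each wall-crossing time between $\sigma$ and $\tau$, the pivot point has state $S$, and so by the foundational identity $S$ is invariant under the pointwise stabilizer of that pivot. In particular, the simple reflection realizing each chamber transition lies in $\mathrm{Stab}_W(S)$. Composing these reflections produces an element $w \in \mathrm{Stab}_W(S)$ with $wF = F_1$; at the initial step this uses the classical transitivity of $W_{\lambda_\sigma}$ on chambers containing $\lambda_\sigma$ in their closure.

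For $t$ just past $\tau$, genericity guarantees $\lambda_t$ remains in $F_1$ (since $\tau$ is the crossing of some $\chi^\perp$ not coincident with any Weyl wall), so $\Xi_{V, \lambda_t \ge 0} = S' \supsetneq S$. Then $w^{-1}\lambda_t \in F$ and
$$
\Xi_{V, w^{-1}\lambda_t \ge 0} \;=\; w^{-1}\Xi_{V, \lambda_t \ge 0} \;=\; w^{-1}S' \;\supsetneq\; w^{-1}S \;=\; S,
$$
using $w \in \mathrm{Stab}_W(S)$ in the final equality. This contradicts the maximality of $S$ in $F$ and completes the proof. The principal technical obstacles are (i) formalizing the genericity reduction so that the perturbation of $\nu_1$ within $\mathrm{Rep}(S_1)$ produces a segment whose hyperplane crossings are transverse and occur at distinct times, and (ii) invoking the standard transitivity of stabilizers on stars of faces in Coxeter arrangements; both should be manageable, but the first requires careful combinatorial bookkeeping when $\lambda$ itself lies on a face of $F$ of codimension greater than one.
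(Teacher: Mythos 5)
First, a point of order: the paper does not prove this statement --- it is stated as Conjecture \ref{conj: Wprime optimization} precisely because the authors only have computational evidence for it, so there is no proof of record to compare yours against. What you have written is an attempt at an open problem, and I think your strategy is essentially right: reduce via Lemma \ref{lem:refiningmaximalset} to showing that a state maximal among $\{\Xi_{V,\nu\ge 0}\}_{\nu\in F}$ is maximal among all $\{\Xi_{V,\nu\ge 0}\}$, then walk along the segment from $\lambda$ to a putative witness $\nu_1$, observe that the state is non-decreasing and stays equal to $S$ up to time $\tau$, and use the identity $\Xi_{V,w\nu\ge 0}=w\,\Xi_{V,\nu\ge 0}$ to show that every reflection encountered before time $\tau$ lies in $\mathrm{Stab}_W(S)$, so the chamber just before $\tau$ is $wF$ for some $w\in\mathrm{Stab}_W(S)$ and pulling back yields a witness inside $F$. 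All of these individual steps check out, including the monotonicity of $t\mapsto\Xi_{V,\lambda_t\ge 0}$ (which really does require $S\subseteq S_1$) and the transitivity of $W_{\lambda_\sigma}$ on chambers whose closure contains $\lambda_\sigma$.

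The genuine gap is the genericity reduction, and it is worse than a bookkeeping issue. Your claim that $\mathrm{Rep}(S_1)$ is relatively open in its affine span is false in general (it is the intersection of a closed cone with an open set), and more seriously, the cone $C_S=\{\nu:\langle\nu,\chi\rangle\ge 0,\ \forall\chi\in S\}$ in which the segment must live can be low-dimensional or even contained in Weyl walls, in which case no perturbation of $\nu_1$ makes the segment cross walls transversely one at a time; your chamber-chain $F^{(0)},\dots,F^{(k)}$ then does not exist as described. Fortunately the whole genericity apparatus can be discarded. For each $t$ set $A(t):=\{g\in W:\lambda_t\in gF\}$; this is a left coset $W_{\lambda_t}\,g_t$ because $F$ is a strict fundamental domain, it is nonempty, and it satisfies $A(t')\subseteq A(t)$ for $t'$ near $t$ since the union of closed chambers missing $\lambda_t$ is closed. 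Whenever $\Xi_{V,\lambda_t\ge 0}=S$ you have $W_{\lambda_t}\subseteq\mathrm{Stab}_W(S)$, so $A(t)\cap\mathrm{Stab}_W(S)\ne\emptyset$ forces $A(t)\subseteq\mathrm{Stab}_W(S)$; hence $\{t:A(t)\subseteq\mathrm{Stab}_W(S)\}$ is open, closed, and contains $0$, so it contains $[0,\tau)$, and a limit (or the semicontinuity at $\tau$) hands you the desired $w\in\mathrm{Stab}_W(S)$ with $w^{-1}\lambda_t\in F$ for $t$ at or just past $\tau$. With that replacement your argument becomes complete (for both the $\ge 0$ and $>0$ versions, noting that in the strict case the state at $t=\tau$ itself is still $S$), and you should also record the small point that $\cS_m$ as computed equals the set of maximal elements of $\{\Xi_{V,\nu\ge0}\}_{\nu\in F\setminus\{0\}}$, which the paper asserts in the remark following the outline of Algorithm \ref{alg:Stable}. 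I would encourage you to write this up carefully: if correct it settles the conjecture affirmatively and lets the authors delete the final optimization loop from both algorithms.
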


Proving Conjecture \ref{conj: Wprime optimization} would allow us to remove these steps from Algorithms \ref{alg:Stable} and \ref{alg:Semistable}, improving their speed.

\subsection{On a question from a 2004 workshop at AIM}
\label{ssec:1psApproach}

The following question was posed at the workshop `Compact moduli spaces and birational geometry' at the American Institute of Mathematics in 2004.  
\begin{question}[{\cite[Problem 3.2]{Van04}}]
   \emph{``For hypersurfaces of a given dimension [$n$] and degree [$d$], is there a bound on the exponents appearing in the diagonal 1-PS that need to be checked?''}.
\end{question}

The existence of such a bound follows immediately from the finiteness of the sets $\Lambda_{ss}$ and $\Lambda_{s}$ of Corollary \ref{cor:finite}, but the true intention of \cite[Problem 3.2]{Van04} is to give a explicit estimate in terms of $n$ and $d$, preferably one that is sharp or nearly sharp. We will not give a thorough solution to this problem here, but we want to point out that the ideas used to develop Algorithms \ref{alg:Stable} and \ref{alg:Semistable} can be used to give a coarse upper bound.

Consider the stable locus. (A similar discussion applied to the semistable locus.) By Algorithm \ref{alg:Stable}, it is enough to consider 1-parameter subgroups $\lambda$ that are orthogonal to each element in a subset $I$ of $A_3$ having size $(d-1)$. Such a $\lambda$ can be expressed using the cofactors of the $(d-1)\times d$ matrix whose rows consist of the characters $\chi \in I$. Then any bound on these cofactors (for instance, Hadamard's Inequality, $|\det A | \leq \prod_{j=1}^{n} \|A_j\| $, where $A$ is $n\times n$ and $A_j$ is the $j^{th}$ column) leads to a bound on the coefficients of $\lambda$. However, this can be far from sharp. For example, for cubic surfaces, Hadamard's Inequality gives 9 as the bound on each cofactor. But the output from Algorithm \ref{alg:Stable} shows that it suffices to work with one-parameter subgroups with coefficients in $\{0,\pm 1,\pm 2\}$.

\subsection{Variation of GIT quotients}

Recall the original setting of a polarized pair $(X, L)$ with a reductive linearized $G$-action on it. If $\mathrm{rank} \mathrm{Pic}(X) \ge 2$ or $G$ has a torus factor, there are many possible linearizations, and different linearizations can give rise to non-isomorphic GIT quotients $X\git_L G$ \cite{Tha96, DH98}.%

Most of the work on computational VGIT has focused on the case of an affine variety modulo a torus (\cite{Kei12, BKR20}), rather than a projective variety modulo a noncommutative group. %
  In \cite{GMG18} the first two authors introduced the notion of compactification of the moduli space of log pairs formed by a Fano or Calabi-Yau hypersurface $X_d\subset \PP^n$ of degree $d$ and a hyperplane section using VGIT quotients by the group $\SL_{n+1}$. They also provided algorithms in the spirit of Algorithms \ref{alg:Stable}, \ref{alg:Semistable}, \ref{alg:Polystable} (although less complete, efficient and only for certain choice of group) and demonstrated their use to describe VGIT compactifications of the moduli space of log pairs formed by a cubic surface and an anti-canonical divisor in \cite{GMGS21}. It would be interesting  to extend the algorithms studied here to study VGIT of semisimple/reductive groups acting on $X=\PP^{n_1}\times \cdots \times \PP^{n_k}$, with the goal of describing both the VGIT wall-and-chamber decomposition as well as the (semi/poly)stable points within each chamber. Then, using Theorem \ref{thm:functoriality}, one may be able to extend this description to a more general $X$ (e.g a Mori dream space, where the space of stability conditions is polyhedral).

\subsection{On the limits of functoriality}

In theory, Theorem \ref{thm:functoriality} is sufficient to determine the stability of any $X$ with a linearized reductive group action $G$. But the suggested algorithms in this paper are not efficient enough to deduce $X^{ss}(L) = X \cap \PP V^{* ss}(\cO(1))$ and $X^{s}(L) = X \cap \PP V^{* s}(\cO(1))$, as many states (as a subset of $\Xi_V$) are not realized as $\Xi_x$ for some point $x \in X$. 

Many natural explicit parameter spaces are given by the Grassmanianns $\Gr(k, V)$. Instead of using its Pl\"ucker embedding $\Gr(k, V) \subset \PP (\wedge^k V)^*$ and applying Algorithms \ref{alg:Stable}, \ref{alg:Semistable}, and \ref{alg:Polystable}, it is desirable to find an algorithm that directly calculates $P_s^F, P_{ss}^F$, and $P_{ps}^F$ from $\Gr(k, V)$. Combining our ideas and \cite{papazachariou2022k} to describe GIT quotients of Grassmannians by simple groups algorithmically may be possible and it may have applications to moduli theory.

\printbibliography

\end{document}